\newcommand{\labitem}[2]{%
\def\@itemlabel{#1}
\item
\def\@currentlabel{#1}\label{#2}}
\newtheorem{theorem}{Theorem}[section]
\newtheorem{lemma}[theorem]{Lemma}
\newtheorem{corollary}[theorem]{Corollary}
\title{Non Parametric Hidden Markov Models with Finite State Space: Posterior Concentration Rates}
\author{Elodie Vernet\\Laboratoire de Mathématiques d'Orsay,\\ Univ. Paris-Sud, CNRS, Université
Paris-Saclay,\\ 91405 Orsay, France\\
\texttt{elodie.vernet@math.u-psud.fr}}
\begin{document}

\maketitle

\begin{abstract}
The use of non parametric hidden Markov models with finite state space is flourishing in practice while few theoretical guarantees are known in this framework. Here, we study asymptotic guarantees for these models in the Bayesian framework.  We  obtain posterior concentration rates  with respect to the $L_1$-norm on joint marginal densities of consecutive observations in a general theorem.  We apply this theorem to two cases and obtain minimax concentration rates. We consider discrete observations with emission distributions distributed from a Dirichlet process and continuous observations with emission distributions distributed from Dirichlet process mixtures of Gaussian distributions.
\end{abstract}

\tableofcontents

\section{Introduction}

Hidden Markov models (HMMs) are stochastic processes much used in practice in fields as diverse as genomics, speech recognition, econometrics or climate. A hidden Markov chain is a sequence $(X_t,Y_t)_{t \in \mathbb{N}}$ where the sequence $(X_t)_{t \in \mathbb{N}}$ is a non observed Markov chain  and the sequence of observations $(Y_t)_{t \in \mathbb{N}}$ is a noisy version of the chain $(X_t)_{t \in \mathbb{N}}$.  In this paper we consider the case where the state space of the underlying Markov chain is finite. In this situation, HMMs are often employed to classify dependent data with respect to the hidden states $X_t, ~ t \in \mathbb{N}$. Their popularity is due to their tractability. Since their introduction in \citep{BaPe66}, many algorithms have been developed to infer these models. The books \citep{CaMoRy05}, \citep{macdonald:zucchini:1997} and \citep{macdonald:zucchini:2009} give an overview of this family of models.

Parametric HMMs suffer from a lack of robustness so that non parametric HMMs are used more and more in applications. Indeed two constraints weaken parametric HMMs: the necessary assumption of a bound on the number of states of the Markov chain and the limitations of the parametric modeling of emission distributions (the distributions of an observation $Y_t$ given the hidden states $X_t$).
To deal with these issues,  HMMs with an infinite  countable number of states for the Markov chain are applied in
\citep{BeKr12} to gene expression time course clustering,
\citep{Jo15} to U.S. inflation dynamics and in
\citep{FoJoSuWi09} to segmentation of visual motion capture data.
To handle speaker diarization, \citep{FoSuJoWi11} proposes a model where the number of states of the Markov chain is not bounded and the emission distributions are not restricted to live in a parametric family.
HMMs, where the number of states is known but the emission distributions set is not assumed to be parametric, are used in \citep{LaKnSoDe15} for whales dive modeling, \citep{YaPaRoHo11} for genetic copy number variants, \citep{LaWhMe03} for climate state identification, \citep{Le03} for speech recognition, \citep{GaClRo13} for gene expression identification, see also the references herein. 
This last framework, namely HMMs where the number of states of the Markov chain is known and emission distributions may live in infinite-dimensional sets is the one we consider in this paper. 

 The use of non parametric HMMs is flourishing in practice while few theoretical properties are known. Many theoretical results exist for parametric HMMs particularly for the maximum likelihood estimator, see  \cite{CaMoRy05} and references herein for instance, see also \citep{GuSh08} for a Bernstein von Mises property of the posterior.
In the non parametric framework, there exist few theoretical guarantees of the asymptotic behavior of estimators or posterior since identifiability for general HMMs with finite state space was still an issue until recently.
General identifiability is proved in \citep{GaClRo13} when the number of states of the Markov chain is known and in \citep{GrHa14} when this number is unknown. \citep{GaClRo13} proves that under mild assumptions, the knowledge of the marginal joint density of at least three consecutive observations $(Y_t, Y_{t+1}, Y_{t+2})$ gives the parameters of the HMM up to label switching (i.e. the transition matrix of the Markov chain and the emission distributions). 
Here, we are interested in obtaining asymptotics in the Bayesian framework for the marginal joint density of consecutive observations.

 In the Bayesian non parametric setting, asymptotic analysis typically takes the following two forms: posterior consistency and posterior concentration rates. The posterior is said to be consistent at a parameter $\theta^*$ if it concentrates its mass around $\theta^*$, when the observations come from $\theta^*$ and the number of observations increases. Posterior consistency is related to the merging of posteriors distributions associated to two priors, see \citep{DiFr86}. In a non parametric setup, where it is not feasible to construct a fully subjective prior (on an infinite dimensional space), it  is a minimal requirement, see \citep{GhRa03}. To go further on, one can study the rate at which this concentration occurs. Obtaining a minimax posterior concentration rates  is a criterion of optimality. In particular, minimax concentration rates lead to minimax  Bayesian estimators \citep{GhGhVa00} and to minimax size of credible regions \citep{HoRoSh13}. 
The concentration rate analysis also allows a better understanding of the impact of the prior, see \citep{Rou15} for a discussion.

In Bayesian HMMs where the number of states of the Markov chain is known, \citep{Ve15} provides assumptions leading to posterior consistency for the $L_1$-norm of the marginal density of consecutive observations. Here, we pursue the study of the asymptotic behavior of the posterior distribution in this framework and with the same topology. Namely, we study posterior concentration rates for non parametric HMMs with respect to the $L_1$-norm of the marginal joint density of consecutive observations. 
We first give a general theorem relating the posterior concentration rate to the prior and the true model (Theorem \ref{th1}). Then we apply the theorem to different setups, where we obtain minimax rates (Section \ref{se:applications}). To the best of our knowledge,  these are the first results on posterior concentration rates in non parametric HMMs.

Let us mention the few other asymptotic results we know in the framework of non parametric HMMs. In the non parametric frequentist framework with a finite and known number of states, \citep{DeGaLa15} offers an oracle inequality for a penalized least-squares estimator of the emission distributions.
In the framework of HMMs with an unknown number of states and emission distributions living in a finite-dimensional set,  posterior concentration rates  are studied in \citep{GaRo12}. 
 \citep{GaRo13} proposes asymptotics for the particular case of translated HMMs with finite state space. Finally, convergence  with respect to smoothing distributions is studied in \citep{CaGaCo15}.

The paper is organized as follows. In Section \ref{se:notations}, we precise the studied model and the notations.
In Section \ref{se:genral_theorem}, we state general assumptions under which the posterior concentration rate is derived (Theorem \ref{th1}). 
We have chosen to define a set of assumptions as close as possible to those typically obtained in density estimation for i.i.d. models, see \citep{GhGhVa00}.
 The proof of this theorem is given in Section \ref{se:sketchofproof}. All the other proofs are postponed in the appendices.
Theorem \ref{th1} is applied in Section \ref{se:applications} in two cases. 
In Section \ref{se:discrete}, the observations are assumed to be discrete and the prior on emission distributions is based on a Dirichlet process. We obtain a minimax rate which is $1/\sqrt{n}$ up to a power of $\log n$ in Corollary \ref{co:vit_dis}.
 In Section \ref{se:continuous}, the observations are assumed to be continuous and the emission distributions follow independently Dirichlet process mixtures of Gaussian distribution. 
 Minimax rates of concentration are obtained for Hölder-type functional classes, see Corollary \ref{co:vit_cont}.

\section{Bayesian hidden Markov models and notations}\label{se:notations}

We consider observations coming from homogeneous hidden Markov models with finite state space. Hidden Markov chains are discrete time stochastic processes $(X_t,Y_t)_{t \in \mathbb{N}}$ satisfying the following properties. The sequence $(X_t)_{t \in \mathbb{N}}$ is a Markov chain. Conditionally on the hidden chain $(X_t)_{t \in \mathbb{N}}$, the observations $Y_t$ are independent with $Y_t$ only depending on $X_t$.  The states $(X_t)_{t \in \mathbb{N}}$ are latent, they are called the hidden states. The statistician observes the sequence $(Y_t)_ {t \leq n}$ where $n$ is an integer. Throughout the paper, for any integer $n$, an $n$-uple $(y_{1},\ldots,y_{n})$ is denoted $y_{1:n}$.

We first introduce the notations concerning the Markov chain $(X_t)_{t\in \mathbb{N}}$. 
For all $t\in \mathbb{N}$, $X_t$ belongs to $ \{1, \dots,k\}$, where $k$ is assumed to be known in this paper. A transition matrix $Q$ and an initial probability distribution $\mu$  describe the distribution of the underlying Markov chain
\[
X_1 \sim  \sum_{1\leq i \leq k}\mu_i \delta_i, \quad
X_t | (X_{t-1}=i) \sim  \sum_{1 \leq j \leq k} Q_{i, j} \delta_j
\]
where $\delta_i$ denotes the Dirac measure at $i$.
The set of all initial probability distributions is the $k-1$-simplex $\Delta_k=\{ x \in [0,1]^k : \sum_{1 \leq i \leq k} x_i =1\}$. We denote $\Delta^k_k$ the set of all transition matrices such that each row of the matrix is  an element of $\Delta_k$. 
In the following we need  $\Delta_k(\underline{q})=\{ \mu \in \Delta_k : \mu_i\geq \underline{q} ~ \forall i  \}$
and $\Delta^k_k(\underline{q})=\{ Q \in \Delta_k^k : Q_{i,j}\geq \underline{q} ~ \forall i,j\}$ , for $\underline{q} \in(0,1)$. When $Q$ is in $\Delta^k_k(\underline{q})$, with $\underline{q} \in (0,1)$, then the uniform mixing coefficients, defined in \citep{Ri00}, associated to the corresponding Markov chain are bounded by $\phi(m)\leq (1 - \underline{q} )^m$, moreover the corresponding Markov chain is irreducible and positive recurrent.
 
 The observations $Y_t$ are assumed to live in $\mathbb{R}^d$ which is endowed with its Borel sigma field. The  distribution of $Y_t$ is assumed to be absolutely continuous with respect to some measure $\lambda$ on $\mathbb{R}^d$. Conditionally on $(X_t)_{t \in \mathbb{N}}$, $Y_t$ is distributed from a distribution $f_{X_t} \lambda$ depending on the state $X_t$:
\[
Y_t|(X_s)_{s\in \mathbb{N}} \sim Y_t| X_t \sim f_{X_t} \lambda.
\]
The distributions $f_i \lambda$, $1\leq i \leq k$ are called the emission distributions.
 The set of probability density functions with respect to $\lambda$ is denoted $\mathcal{F}$. The vector $f=(f_1, \dots, f_k)\in \mathcal{F}^k$ is formed with the $k$ emission density functions.

Then the model is completely described by the parameters $\mu$ and $\theta=(Q,f)$ where $\mu \in \Delta^k$ and  $\theta=(Q,f) \in\Delta^k_k \times \mathcal{F}^k=:\Theta$. The model can be visualized in Figure \ref{graph}. 
\begin{figure}[ht]

	\centerline{\includegraphics[width=10cm]{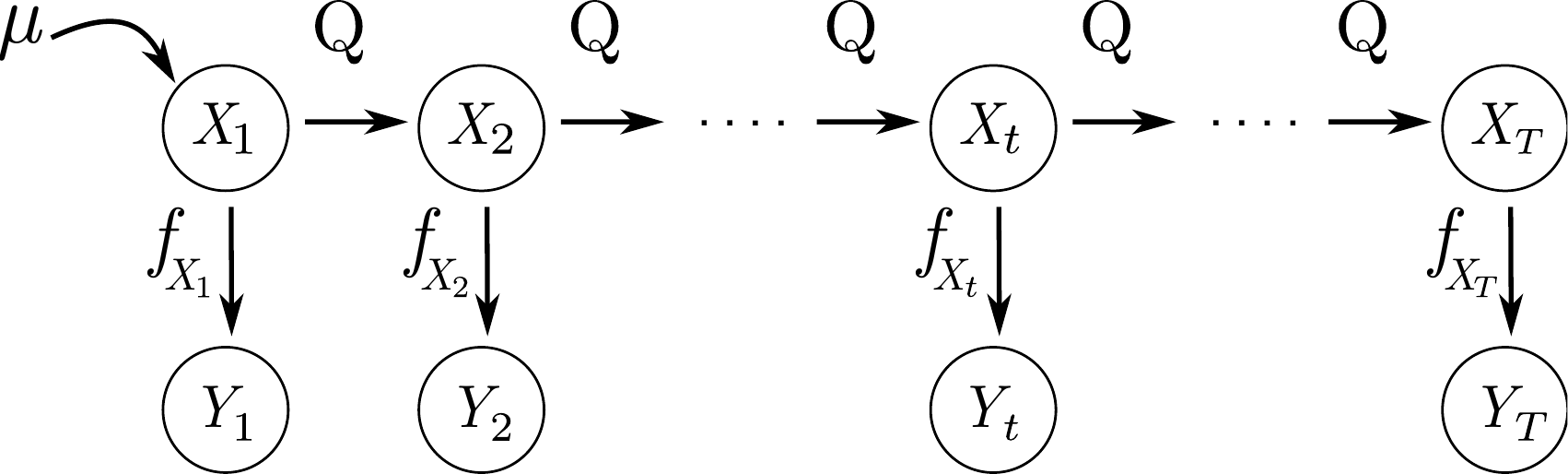}}
	\caption{The model}
	\label{graph}
\end{figure}

Let $P^{\mu,\theta}$ be the probability distribution of the process $(X_t,Y_t)_{t \in \mathbb{N}}$ under $(\mu,\theta)$. 
 Then for any $\theta \in \Theta$, initial probability $\mu$, and 
 measurable set $A$ of $\{1,\dots,k\}^\ell \times (\mathbb{R}^d)^\ell  $, note that:
 \begin{multline*}
 P^{\mu,\theta} ((X_{1:\ell},Y_{1:\ell}) \in A) =\int \sum_{x_1, \dots, x_\ell=1}^k \mathds{1}_{(x_1, \dots,x_\ell,y_1,\dots,y_\ell)\in A} ~ \mu_{x_1} Q_{x_1,x_2} \dots
 Q_{x_{\ell-1},x_\ell} \\
  f_{x_1}(y_1) \dots f_{x_\ell}(y_\ell) \lambda(dy_1)\dots\lambda(dy_\ell) .
 \end{multline*}
 Note that when $Q$ is in $\Delta_k^k(\underline{q})$, with $\underline{q}$ positive, there exists a unique stationary initial distribution $\mu^Q$ associated with $Q$. When $\mu$  is not specified, the stationary distribution associated with the transition matrix $Q$ is considered in the place of $\mu$. In other words, we define $ P^{(Q,f)}:= P^{\mu^Q,(Q,f)}$.
The joint distribution of $\ell$ consecutive observations ($(Y_1,\dots Y_\ell)$ for instance) under the stationary process associated with $\theta$ is denoted $P_\ell^{\theta}$. Let  $p_\ell^{\theta}$ denote the density of $P_\ell^{\theta}$ with respect to $\lambda^{\otimes \ell}$. Then,
\begin{equation*}
  p^{\theta}_\ell (y_1, \dots,y_\ell) 
  = 
 \sum_{x_1, \dots, x_\ell=1}^k 
 \mu_{x_1} Q_{x_1,x_2} \dots Q_{x_{\ell-1},x_\ell} f_{x_1}(y_1) \dots f_{x_\ell}(y_\ell), 
 \quad \lambda^{\otimes \ell} \text{ a.s.} .
 \end{equation*}
The log-likelihood for a sequence of observations $Y_{1:\ell}$ under a parameter $\theta$ is denoted 
\[
L_{\ell}^{\theta}:=\log \big(p_{\ell}^\theta(Y_1, \dots ,Y_{\ell})\big).\] The dependency of $L_{\ell}^{\theta}$ with $Y_{1:\ell}$ is implicit and can be deduced from the context.

Working in the Bayesian framework, we put a prior $\Pi$ on the set of parameters $\Theta$. We choose a product probability measure
$\Pi=\Pi_Q \otimes \Pi_f^{(k)}$ where $\Pi_Q$ is a probability distribution on $\Delta^k_k$ and $\Pi_f^{(k)}$ is a probability distribution on $\mathcal{F}^k$. 
To a realization $\theta$ from $\Pi$, we implicitly associate a stationary initial distribution $\mu^Q$. In other words, we generalize $\Pi$ to a distribution on $\Delta_k \times \Theta$ such that under $\Pi$ and conditionally on $\theta=(Q,f)$, $\mu=\mu^Q$.  Then using the Bayes' theorem, the posterior is expressed by
\[
\Pi( \theta \in A | Y_{1:n}) = \frac{\int_{A} p_n^\theta(Y_{1:n})\Pi(d\theta)}{\int_{\Theta} p_n^\theta(Y_{1:n})\Pi(d\theta)}
.\]

We are interested in the asymptotic behaviour of the posterior that is to say when the number $n$ of observations $Y_{1:n}$ tends to infinity.
For this purpose, we take a frequentist point of view, assuming that the observations come from the true parameters $\mu^*$ and $\theta^*=(Q^*,f^*)$. 
We  suppose that the true initial distribution $\mu^*$ is stationary. 
We also assume that there exists $\underline{q}^*>0$ such that  
\begin{equation}\label{hyp:q*}
Q^* \in \Delta^k_k(\underline{q}^*).
\end{equation}

\citep{Ve15} shows  posterior consistency at $\theta^*$ under general assumptions. In this paper, we consider posterior concentration rates at $\theta^*$.
Recall that the posterior is said to concentrate at rate $\epsilon_n$, a sequence decreasing to $0$, for the loss $D(\cdot,\cdot)$ if there exists a constant $M>0$ such that
\[
\Pi(\theta : D(\theta,\theta^*) \geq  M\epsilon_n | Y_{1:n}) =o_{P^{\theta^*}}(1),
\]
where $Z=o_{P^{\theta^*}}(1)$ means that $Z$ converges in probability to $0$.
 We choose to study the concentration of the posterior from the density estimation point of view. We compare two parameters $\theta$ and $\tilde{\theta}$ by computing the $L_1$-distance between the joint densities $p_\ell^\theta$ and $p_\ell^{\tilde{\theta}}$. For two distributions $P_1$ and $P_2$, let $p_1$ and $p_2$ be their respective densities with respect to a dominated measure $\nu$. The $L_1$-metric is defined by
\[
\lVert p_1 -p_2 \rVert_{L_1(\nu)} = \int \lvert p_1 - p_2 \rvert \nu
\]
and let 
\[
KL(p_1,p_2)=\int p_1 \log\left( \frac{p_1}{p_2} \right) \nu
\]
be the Kullback-Leibler divergence between $p_1$ and $p_2$.
For an integer $\ell \geq 1$, we use the pseudo-distance $D_\ell$ on $\Theta$ defined by 
\[
D_\ell(\theta,\tilde{\theta}) = \lVert p_\ell^\theta - p_\ell^{\tilde{\theta}} \rVert_{L_1(\lambda^{\otimes l})}
.
\]
We study the posterior rate of concentration with respect to this pseudo-distance $D_\ell$.
On $\mathcal{F}^k$, we use the distance $d(\cdot,\cdot)$ such that for all $(f,\tilde{f}) \in (\mathcal{F}^k)^2$
\[
d(f,\tilde{f})=\max_{1 \leq i \leq k} \lVert f_i - \tilde{f}_i \rVert_{L_1(\lambda)}, 
\] 
on $\mathbb{R}^d$, $ d \geq 2$,  we use the supremum norm $\lVert \cdot \rVert$.
For a positive real $\epsilon$, a pseudo distance $D$ defined on a set $A$, let $N(\epsilon,A,D)$ be the covering number that is to say  the minimum number of balls of radius $\epsilon$ (in the pseudo-distance $D$) needed to cover $A$. 
Throughout the paper the notation $\lesssim$ means less or equal up to a multiplicative constant which is not important in the context.

\section{General Theorem}\label{se:genral_theorem}

\subsection{Assumptions and main theorem}\label{se:assumptions_th}

In this section, we state the general Theorem \ref{th1} which gives posterior concentration rates with respect to the $D_\ell$ pseudo-metric.  As in \citep{GhGhVa00} for instance, we propose a set of conditions which relates the rate $\epsilon_n/ \underline{q}_n$ to the prior and the true model. We apply this theorem to the case of discrete observations in Section \ref{se:discrete} and to the case of continuous observations in Section \ref{se:continuous} where minimax rates are achieved.
Now, we enumerate the assumptions of Theorem \ref{th1}.
Assumptions \eqref{hyp:KL} and \eqref{hyp:Fn} concern the prior on the emission distributions $\Pi_f^{(k)}$ and the  vector of the true emission distributions $f^*$. Assumptions \eqref{hyp:voisQ*} and \eqref{hyp:qn} involve the prior on transition matrices $\Pi_Q$ and the true transition matrix $Q^*$.

 For a given sequence $\epsilon_n>0$ 
 tending to $0$, 
 we introduce a sequence $\tilde{\epsilon}_n$ such that $\tilde{\epsilon}_n\leq\epsilon_n$ and the sequence $u_n$ of positive numbers such that
\begin{equation}\label{eq:def_un}
\text{
\begin{minipage}{30em}
\begin{enumerate}[(i)]
\item $u_n=1$, for all $n \in \mathbb{N}$; if $\tilde{\epsilon}_n \gtrsim n^{-s}$, for some $s<1/2$,
\item  $u_n=(\log(n))^{3/2}$, for all $n \in \mathbb{N}$; otherwise.
\end{enumerate}
\end{minipage}
}
\end{equation}
  We consider the following assumptions
\begin{enumerate}[(A)]
\item\label{hyp:KL} there exist a positive constant $C_f$ and a sequence $B_n$ of subsets of $\mathcal{F}^k$ 
such that 
\[
\Pi_f^{(k)} (B_n) \gtrsim \exp(-C_f n\tilde{\epsilon}_n^2)
 \]
 and such that for all $f \in B_n$,

\begin{equation}\label{hypa:log2}
 \int f^*_i(y)  \log^2\left(\frac{f^*_j(y)}{f_j(y)}\right) \lambda(dy) \leq \frac{\tilde{\epsilon}^{2}_n}{u_n} ,
\text{ for all } 1 \leq i,j \leq k,
\tag{A.1}
\end{equation}
there exist  a set $S\subset \mathcal{Y}$ and functions $\tilde{f}_1, \dots \tilde{f}_k$ satisfying
\begin{equation}\label{hypa:qui2}
 \int_S   
  \frac{  \lvert f^*_j(y) - f_j(y)  \rvert^2}{f^*_j(y)}  \lambda(dy) \leq \frac{\tilde{\epsilon}^{2}_n}{u_n}, \text{ for all } 1 \leq j \leq k,
 \tag{A.2}
\end{equation}
\begin{equation}\label{hypa:maxSc}
 \int_{S^c} \tilde{f}_j(y) \lambda(dy) \leq   \tilde{\epsilon}^{2}_n,
\text{ for all } 1 \leq j \leq k,
\tag{A.3}
\end{equation}
\begin{equation}\label{hypa:maxf*Sc}
 \int_{S^c} f^*_j(y) \lambda(dy) \leq \frac{\tilde{\epsilon}^{2}_n}{u_n},
\text{ for all } 1 \leq j \leq k,
\tag{A.4}
\end{equation}
\begin{equation}\label{hypa:maxloSc}
\int_{S} f^*_i(y) \max_{1 \leq j \leq k}
\log \left(\frac{\tilde{f}_j(y)}{f_j(y)}\right)\lambda(dy) \leq   \tilde{\epsilon}^{2}_n,
\text{ for all } 1 \leq i \leq k,
\tag{A.5}
\end{equation} 
\begin{equation}\label{hypa:qui2S}
 \int_S  
  \frac{  \lvert f^*_j(y) - \tilde{f}_j(y)  \rvert^2}{\tilde{f}_j(y)} \lambda(dy) \leq \tilde{\epsilon}^{2}_n.
\text{ for all } 1 \leq j \leq k,
\tag{A.6}
\end{equation}
\item\label{hyp:Fn} there exist positive constants $C$ and $C'$ and a sequence $(\mathcal{F}_n)_{n\geq 1}$ of subsets of $\mathcal{F}^k$ such that 
\[
\Pi_f^{(k)}(\mathcal{F}_n^c) = o(\exp(- C n \tilde{\epsilon}_n^2 )) 
\]
 and 
\[
N\left(\frac{\epsilon_n}{12}, \mathcal{F}_n, d \right) \lesssim \exp\left( C' n \epsilon_n^2
\right),  
\]
\item \label{hyp:voisQ*} there exists a positive constant $C_Q$ such that $C_Q+C_f+C_K < C$ with $C_K= 4 + \log\left(2k/\underline{q}^*\right) + 10^4k^2/{\underline{q}^*}^5 $,
\[
\Pi_Q \left( \left\{ Q : \lVert Q - Q^* \rVert \leq \frac{\tilde{\epsilon}_n}{\sqrt{u_n}}  \right\} \right) \gtrsim \exp( -C_Q n \tilde{\epsilon}_n^2 ),
\]
\item \label{hyp:qn} there exists a sequence $\underline{q}_n$ of $(0,1/k]$ such that
\[
\Pi_Q\Big(\big(\Delta_k^k(\,\underline{q}_n)\big)^c\Big)= o (\exp( -  C n \tilde{\epsilon}_n^2 ))
.\]
\end{enumerate}

Under the above assumptions, we prove that the posterior distribution concentrates at the rate $\epsilon_n/\underline{q}_n$.

\begin{theorem}\label{th1}
Let $\epsilon_n\geq \tilde{\epsilon}_n>0$ be two sequences tending to $0$ such that $n \tilde{\epsilon}_n^2  \to  + \infty$.
Assume \eqref{hyp:KL}, \eqref{hyp:Fn}, \eqref{hyp:voisQ*} and \eqref{hyp:qn}.

Then there exists a positive constant $M$  
such that
\begin{equation}\label{eq:ccl_th}
\Pi \left(\theta : D_\ell(\theta, \theta^*) \geq M \frac{\epsilon_n}{\underline{q}_n}  ~  \bigg| Y_{1:n} \right)= o_{\mathbb{P}^{\theta^*}}(1).
\end{equation}
\end{theorem}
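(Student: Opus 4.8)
The plan is to follow the classical testing strategy of \citep{GhGhVa00}, adapted to the dependent observations produced by an HMM. Writing $L_n^\theta=\log p_n^\theta(Y_{1:n})$ and dividing numerator and denominator by $p_n^{\theta^*}(Y_{1:n})$, I express the posterior mass of the alternative as
\[
\Pi\left(D_\ell(\theta,\theta^*)\geq M\tfrac{\epsilon_n}{\underline{q}_n}\ \middle|\ Y_{1:n}\right)
=\frac{\int_{\{D_\ell(\theta,\theta^*)\geq M\epsilon_n/\underline{q}_n\}} e^{L_n^\theta-L_n^{\theta^*}}\,\Pi(d\theta)}{\int_\Theta e^{L_n^\theta-L_n^{\theta^*}}\,\Pi(d\theta)},
\]
and I would bound the numerator from above and the denominator from below on a single event of $P^{\theta^*}$-probability tending to one.

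\textbf{Lower bound on the denominator.} I would restrict the integral to $A_n=\{\theta=(Q,f):f\in B_n,\ \lVert Q-Q^*\rVert\leq \tilde{\epsilon}_n/\sqrt{u_n}\}$, whose prior mass is at least $\exp(-(C_f+C_Q)n\tilde{\epsilon}_n^2)$ by \eqref{hyp:KL} and \eqref{hyp:voisQ*} and the product structure of $\Pi$. The crux is that for $\theta\in A_n$ the log-likelihood ratio is not too negative. For this I would prove an HMM analogue of the Kullback-Leibler control of \citep{GhGhVa00}: a lemma bounding both $KL(p_n^{\theta^*},p_n^\theta)$ and the centered second moment of $L_n^{\theta^*}-L_n^\theta$ by a constant times $n\tilde{\epsilon}_n^2$. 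Assumptions \eqref{hypa:log2}--\eqref{hypa:qui2S} are built exactly to yield these bounds coordinate-by-coordinate: \eqref{hypa:log2} controls the squared log-ratio on the bulk set $S$, \eqref{hypa:qui2} and \eqref{hypa:qui2S} the $\chi^2$-type terms, and \eqref{hypa:maxSc}--\eqref{hypa:maxloSc} the tail set $S^c$ through the auxiliary densities $\tilde f_1,\dots,\tilde f_k$; the factor $u_n$ in \eqref{hypa:log2}, \eqref{hypa:qui2}, \eqref{hypa:maxf*Sc} is what absorbs the variance blow-up when $\tilde\epsilon_n$ is very small. The dependence is handled with the uniform mixing bound $\phi(m)\leq(1-\underline{q}^*)^m$, which lets one telescope the joint log-likelihood into (approximately) one-step contributions and sum the resulting variances; this is where the explicit $C_K=4+\log(2k/\underline{q}^*)+10^4k^2/{\underline{q}^*}^5$ appears. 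A Chebyshev argument then gives, on an event of probability $\to1$, the bound $\int_\Theta e^{L_n^\theta-L_n^{\theta^*}}\Pi(d\theta)\geq\exp(-(C_Q+C_f+C_K)n\tilde{\epsilon}_n^2)$.

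\textbf{Tests.} On the sieve $\Theta_n=\mathcal{F}_n\cap(\Delta_k^k(\underline{q}_n)\times\mathcal{F}^k)$ I would cover $\mathcal{F}_n$ by $d$-balls of radius $\epsilon_n/12$, of which there are $\lesssim\exp(C'n\epsilon_n^2)$ by \eqref{hyp:Fn}, and combine with a finite cover of $\Delta_k^k(\underline{q}_n)$. For each piece lying in the alternative I would build a test $\psi_n$ with exponentially small errors. The essential HMM ingredient is a lemma converting $D_\ell$-separation into separation of the full $n$-dimensional laws: for $Q\in\Delta_k^k(\underline{q}_n)$ one expects a lower bound of the type $H^2(P_n^\theta,P_n^{\theta^*})\gtrsim n\,\underline{q}_n^2\,D_\ell(\theta,\theta^*)^2$, so that on $\{D_\ell\geq M\epsilon_n/\underline{q}_n\}$ the effective per-observation signal is of order $\epsilon_n$ and standard Hellinger testing yields type-I and type-II errors $\lesssim\exp(-c M^2 n\epsilon_n^2)$. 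This $\underline{q}_n^{-2}$ loss in passing from the $\ell$-marginal distance to the testable $n$-sample distance is precisely what forces the rate $\epsilon_n/\underline{q}_n$ rather than $\epsilon_n$. A union bound over the cover keeps the aggregated type-II error at $\exp(-c'n\epsilon_n^2)$ while $E_{\theta^*}[\psi_n]\to0$.

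\textbf{Combination and the main difficulty.} Splitting the numerator as $\int_{\Theta_n^c}+\int_{\Theta_n\cap\{D_\ell\geq M\epsilon_n/\underline{q}_n\}}$, the first piece is bounded by $\Pi(\Theta_n^c)=o(\exp(-Cn\tilde{\epsilon}_n^2))$ from \eqref{hyp:Fn} and \eqref{hyp:qn}; dividing by the denominator and using $C>C_Q+C_f+C_K$ from \eqref{hyp:voisQ*} makes this ratio $o_{P^{\theta^*}}(1)$. For the second piece I weight by $1-\psi_n$ and use the type-II bound, while the part weighted by $\psi_n$ is handled by $E_{\theta^*}[\psi_n]\to0$; choosing $M$ large enough that $cM^2$ exceeds $C_Q+C_f+C_K$ and recalling $\epsilon_n\geq\tilde\epsilon_n$ makes this ratio $o_{P^{\theta^*}}(1)$ as well, which yields \eqref{eq:ccl_th}. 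I expect the decisive obstacle to be the two HMM-specific lemmas rather than the bookkeeping: the variance control of the joint log-likelihood ratio for dependent data (Step~1), and above all the testing lemma relating $D_\ell$ to the $n$-sample Hellinger distance with the correct $\underline{q}_n$ dependence (Step~2), since the latter is responsible for the $\underline{q}_n$ in the final rate and requires exploiting the mixing of the hidden chain uniformly over the sieve.
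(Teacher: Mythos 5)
Your overall architecture --- the decomposition of the posterior into numerator and denominator, the lower bound on the denominator via Kullback--Leibler and variance control of $L_n^{\theta^*}-L_n^{\theta}$ exploiting the forgetting of the hidden chain, and the sieve-plus-covering-plus-tests treatment of the numerator --- coincides with the paper's, and your denominator step is essentially what the paper proves in Lemmas \ref{lemma2v3} and \ref{lemma1}. The genuine gap is in your testing step. The key lemma you propose, $H^2(P_n^\theta,P_n^{\theta^*})\gtrsim n\,\underline{q}_n^2\,D_\ell(\theta,\theta^*)^2$, cannot hold as stated: the squared Hellinger distance between two probability measures is bounded by $2$, whereas on the alternative $D_\ell(\theta,\theta^*)\geq M\epsilon_n/\underline{q}_n$ your right-hand side is $\gtrsim nM^2\epsilon_n^2\to+\infty$. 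What your argument actually requires is an exponential bound on the testing affinity, $\int\sqrt{p_n^\theta\, p_n^{\theta^*}}\,d\lambda^{\otimes n}\leq\exp(-c\,n\,\underline{q}_n^2 D_\ell(\theta,\theta^*)^2)$, together with uniformity of the type-II error over each $d$-ball of the cover. Neither follows from ``standard Hellinger testing'' once the observations are dependent and non-Markovian: the affinity does not tensorize; the chain-rule bound
\begin{equation*}
\int\sqrt{p_n^\theta\, p_n^{\theta^*}}\,d\lambda^{\otimes n}
\leq \prod_{t=2}^n \sup_{y_{1:t-1}} \int \sqrt{q_t^{\theta,y_{1:t-1}}(y)\, q_t^{\theta^*,y_{1:t-1}}(y)}\,\lambda(dy)
\end{equation*}
requires separation of the conditional laws uniformly over histories, which $D_\ell$-separation does not give (the conditionals are mixtures $\sum_i f_i(\cdot) Q_{t,i}^{\theta,y_{1:t-1}}$ that can nearly coincide for unfavourable histories); and the convexity arguments giving robustness over Hellinger balls in the i.i.d.\ case are unavailable here. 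This is exactly the step the paper does not reprove from scratch: it imports the tests built in the proof of Theorem 4 of \citep{GaRo12} (Equations \eqref{eq:eq:proofth1_test*} and \eqref{eq:eq:proofth1_test}), which rest on concentration of empirical $\ell$-tuple measures for $\phi$-mixing chains. In that construction the degraded mixing constant $\underline{q}_n$ under the alternative is what forces the separation $M\epsilon_n/\underline{q}_n$ --- the same mechanism you identify --- and it directly yields both the aggregated type-I bound (covering number times an exponential) and a type-II bound uniform over the sieve, which is what the covering argument needs.

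A secondary, non-fatal imprecision: under \eqref{hypa:log2}--\eqref{hypa:qui2S} the variance of $L_n^{\theta^*}-L_n^{\theta}$ cannot be bounded by a constant times $n\tilde{\epsilon}_n^2$; the paper's Lemma \ref{lemma1} only gives $C_{KL^2}(n/\alpha)\,(\tilde{\epsilon}_n/\sqrt{u_n})^{2-\alpha}$ for $\alpha\in(0,1)$, and the Chebyshev bound for the denominator then needs the additional device of letting $\alpha=\alpha_n\to 0$ (at rate $1/\log n$ or $\log\log n/\log n$) jointly with the definition \eqref{eq:def_un} of $u_n$, so that $n(\tilde{\epsilon}_n/\sqrt{u_n})^{2-\alpha_n}/\big(\alpha_n (n\tilde{\epsilon}_n^2)^2\big)=o(1)$ under the sole hypothesis $n\tilde{\epsilon}_n^2\to\infty$. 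You correctly point to $u_n$ as absorbing the variance blow-up, but the $\alpha_n$ device is part of what makes that step close, so your Step 1 as written is slightly optimistic even though it is fixable along the paper's lines.
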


We now discuss Assumptions \eqref{hyp:KL} to \eqref{hyp:qn}.
We have purposely considered assumptions which are as similar as possible to those considered in the set up of density estimation with i.i.d. observations see 
\cite{GhGhVa00}. In particular Assumption \eqref{hyp:Fn} is the typical entropy assumption on a sieve which captures most of the prior mass (see Assumptions (2.2) and (2.3) of \cite{GhGhVa00}). Assumption \eqref{hyp:KL} is slightly more involved than the Kullback-Leibler condition (2.4) of \cite{GhGhVa00} in the case of i.i.d. observations.

This paragraph explains the differences between Assumption \eqref{hyp:KL} and condition (2.4) of \cite{GhGhVa00} and can be omitted at first reading. Following \cite{GhVa07niid}, posterior rates of convergence are obtained by controlling ``Kullback-Leibler" neighborhoods and constructing some tests.
 Under \eqref{hyp:KL} and condition (2.4) of \cite{GhGhVa00}, the Kullback-Leibler neighborhoods are controlled.
 Recall that in \cite{GhGhVa00}, the control of $\mathbb{E}^{\theta^*}(L_n^{\theta^*}-L_n^{\theta})$ and $Var^{\theta^*} (L_n^{\theta^*}-L_n^{\theta})$
is obtained by controlling 
\[ \int f^*_i(y) \log \left(\frac{f^*_i(y)}{f_i(y)}\right) \lambda (dy)
\qquad \mbox{ and } \qquad
\int f^*_i(y) \log^2 \left(\frac{f^*_i(y)}{f_i(y)}\right) \lambda (dy).
\]
Here $\mathbb{E}^{\theta^*}(L_n^{\theta^*}-L_n^{\theta}) \lesssim n \tilde{\epsilon}_n^2$ and $Var^{\theta^*} (L_n^{\theta^*}-L_n^{\theta} )\lesssim n \tilde{\epsilon}_n^2 \log n$  if $f$ and $f^*$ satisfy Assumptions \eqref{hypa:log2}--\eqref{hypa:qui2S} and $\lVert Q - Q^* \rVert \leq \tilde{\epsilon}_n/\sqrt{u_n}$.
 The non intuitive part of \eqref{hyp:KL} comes from the introduction of $\tilde{f}_j$ as an approximation of $f^*_j$, which may be different from $f_j$ in \eqref{hypa:maxSc}, \eqref{hypa:maxloSc} and \eqref{hypa:qui2S}. Indeed, whithout the introduction of $(\tilde{f}_{j, 1\leq j \leq k},S)$, the HMM structure of the likelihood would lead to a crude upper bound of $\mathbb{E}^{\theta^*}(L_n^{\theta^*}-L_n^{\theta})$  of the form 
\[
n\int f^*_i(y) \max_{j} \log \left( \frac{f^*_j(y)}{f_j(y)} \right) \lambda(dy),
\]
see Equations \eqref{eq:contKL} and \eqref{eq:majoration_frac_q_theta} in the proof of Lemma \ref{lemma2v3}.
Since $i$ may be different from $j$ we would loose the local quadratic approximation of the Kullback-Leibler divergence and we would only obtain $n\tilde{\epsilon}_n$ instead of $n\tilde{\epsilon}_n^2$ as an upper bound of $\mathbb{E}^{\theta^*}(L_n^{\theta^*}-L_n^{\theta})$. The details of the control of the Kullback-Leibler  divergence are given in Appendix \ref{se:prKLv3}.

Not withstanding the technical aspects discussed above, Assumptions \eqref{hypa:log2}--\eqref{hypa:qui2S} are verified using  techniques similar to those used in the case of density estimation with i.i.d. observations to control 
\[ \int f^*_i(y) \log \left( \frac{f^*_i(y)}{f_i(y)} \right) \lambda (dy)
\qquad \mbox{ and } \qquad
\int f^*_i(y) \log^2 \left( \frac{f^*_i(y)}{f_i(y)} \right) \lambda (dy).
\]
For $\Pi^{(k)}_f=(\Pi_f)^{\otimes k}$, and many families of individual prior models $\Pi_f$ on the $f_j$'s, the rate obtained by bounding $\max_j KL(f^*_j,f_j)$ in the i.i.d. set up will be the same as in our setup.
For instance, if $\mathcal{Y}=[0,1]$ and $f^*$ is bounded from below and above, a control of $\lVert f_j - f^*_j\rVert_{\infty}^2 \leq \tilde{\epsilon}_n^2$ or $\lVert f_j - f^*_j\rVert_{2}^2 \leq \tilde{\epsilon}_n^2$ and $f_j>c$ imply \eqref{hypa:log2}--\eqref{hypa:qui2S}. This kind of controls have been derived under (hierarchical) Gaussian process priors or log linear priors as in \cite{VaZa09}, \cite{ArGaRo13} and \cite{RiRo12}. Condition \eqref{hyp:KL} becomes more involved when $\mathcal{Y}$ is not compact. This case is treated under non parametric Gaussian mixtures in Section \ref{se:continuous} and in the case where $\mathcal{Y}=\mathbb{N}$ in Section \ref{se:discrete}.

 Assumption \eqref{hyp:voisQ*} is checked as soon as $\Pi_Q$ admits a positive density with respect to the Lebesgue measure which is continuous at $Q^*$ and $\tilde{\epsilon}_n \geq \sqrt{\log(n)/n}$.  The rate $\epsilon_n$ is often equal to $\tilde{\epsilon}_n$ up to $\log n$, the use of these two different rates is usual and allows more flexibility.  Then the rate $\epsilon_n$  is only determined by the non parametric part of the model, i.e. $\Pi_f^{(k)}$ and $f^*$, as described above. 
 
Following the previous explanation, when $\Pi_Q$, $\Pi_f^{(k)}$, $f^*$ and $Q^*$ are fixed, $\epsilon_n$ is specified by Assumption \eqref{hyp:KL} and \eqref{hyp:Fn}. This rate $\epsilon_n$ is deteriorated via $\underline{q}_n$ which is set through Assumption \eqref{hyp:qn}. The larger $\tilde{\epsilon}_n$ is, that is to say the more difficult the estimation of the non parametric part ($f^*$ with $\Pi_f^{(k)}$) is, the more stringent Assumption \eqref{hyp:qn} is. To avoid too small $\underline{q}_n$ which leads to deteriorated posterior convergence rate $\epsilon_n/\underline{q}_n$, one may choose a prior $\Pi_Q$ which is supported on $\Delta_k^k(\underline{q})$ for some $0<\underline{q}\leq \underline{q}^*$. More examples of distribution $\Pi_Q$ are given in Section \ref{se:applications}.

In the following section, we give the proof of Theorem \ref{th1}.


\subsection{Proof of Theorem \ref{th1}}\label{se:sketchofproof}

To obtain posterior concentration rates in the framework of HMMs with finite state space, we  use the technique of proof of  \cite{GhVa07niid}. The key tools of this technique are a control of the prior mass on log-likelihood neighborhoods of $\theta^*$ and the existence of certain tests.
We use the tests built in \cite{GaRo12}.  The main difficulty of the proof arises from the control of log-likelihood neighborhoods.
 These neighbourhoods are controlled  thanks to Lemmas \ref{lemma2v3} and \ref{lemma1}. The proof of these lemmas are based on refinements of  results of \cite{DoMa01} and \cite{Do04}.

In Lemma \ref{lemma2v3}, we control $KL(p_n^{\theta^*},p_n^{\theta})$. Its proof is given in Section \ref{se:prKLv3}.

\begin{lemma}\label{lemma2v3}
 Let $0<\tilde{\epsilon}_n$ be small enough.
Assume that $\theta=(Q,f)\in \Theta$, is such that 
Assumptions  \eqref{hypa:log2}--\eqref{hypa:qui2S} hold with $u_n=1$ for all $ n \in \mathbb{N}$
and 
\begin{equation}\label{hyp:voisQ*2}
\lVert Q - Q^* \rVert \leq \tilde{\epsilon}_n.
\end{equation}

 Then there exists $N>0$ such that for all $n\geq N$, 
 \begin{equation*}
 \begin{split}
 \mathbb{E}^{\theta^*}(L_n^{\theta^*}-L_n^{\theta}) =KL&(p_n^{\theta^*},p_n^\theta) 
\leq C_K n \tilde{\epsilon}_n^2,
\end{split}
\end{equation*}
 where $C_K$ is defined in Assumption \eqref{hyp:voisQ*}.
\end{lemma}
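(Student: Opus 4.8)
The plan is to control $KL(p_n^{\theta^*},p_n^\theta)=\mathbb{E}^{\theta^*}[L_n^{\theta^*}-L_n^\theta]$ by separating the discrepancy coming from the emission densities from that coming from the transition matrix, and then to show that each contributes at the \emph{quadratic} order $n\tilde{\epsilon}_n^2$. I would start from the decomposition
\[
\log\frac{p_n^{\theta^*}}{p_n^\theta}=\log\frac{p_n^{(Q^*,f^*)}}{p_n^{(Q^*,f)}}+\log\frac{p_n^{(Q^*,f)}}{p_n^{(Q,f)}},
\]
which isolates the change of emissions (first term, at fixed transition $Q^*$) from the change of transition (second term, at fixed emissions $f$). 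For the emission term I would lower bound $p_n^{(Q^*,f)}$ by factoring the emission ratios out of the hidden-state sum: since $\prod_t f_{x_t}(Y_t)\geq\bigl(\prod_t\min_j [f_j/\tilde{f}_j](Y_t)\bigr)\prod_t \tilde{f}_{x_t}(Y_t)$ uniformly in the trajectory $x_{1:n}$, taking logarithms and expectations produces a crude bound of the form $\sum_t\mathbb{E}^{\theta^*}\bigl[\max_j\log(\tilde{f}_j(Y_t)/f_j(Y_t))\bigr]$ plus a residual $\mathbb{E}^{\theta^*}\log\bigl(p_n^{(Q^*,f^*)}/p_n^{(Q^*,\tilde{f})}\bigr)$, where $p_n^{(Q^*,\tilde{f})}$ denotes the (possibly unnormalised) chain with emissions $\tilde{f}$.

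For the transition term $\mathbb{E}^{\theta^*}\log\bigl(p_n^{(Q^*,f)}/p_n^{(Q,f)}\bigr)$ I would rely on the forgetting property: because $Q^*\in\Delta_k^k(\underline{q}^*)$ the stationary chain is uniformly ergodic with mixing coefficients $\phi(m)\leq(1-\underline{q}^*)^m$, so the predictive filters forget their past geometrically. This is exactly where refinements of \citep{DoMa01} and \citep{Do04} enter. Combined with $\|Q-Q^*\|\leq\tilde{\epsilon}_n$ and the uniform lower bound $\underline{q}^*$ on the entries of $Q^*$ (which keeps the relevant log-ratios bounded and allows a local quadratic expansion of the row-wise Kullback-Leibler divergences $KL(Q^*_{i\cdot},Q_{i\cdot})\lesssim k\,\tilde{\epsilon}_n^2/\underline{q}^*$), the per-step contribution is of order $\tilde{\epsilon}_n^2$, summing to $n\tilde{\epsilon}_n^2$ with a constant carrying the high negative power of $\underline{q}^*$ that appears in $C_K$ through the iterated use of the forgetting bound.

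The emission term is the crux. If one inserts $f^*_j$ directly in place of $\tilde{f}_j$, the crude bound becomes $n\sum_i\mu^{Q^*}_i\int f^*_i\,\max_j\log(f^*_j/f_j)\,\lambda$, and since the outer index $i$ need not match the maximising $j$, a Cauchy-Schwarz estimate against \eqref{hypa:log2} only yields the linear order $n\tilde{\epsilon}_n$. The purpose of the auxiliary densities $\tilde{f}_j$ is to split this loss into two matched-index pieces. The factor $\max_j\log(\tilde{f}_j/f_j)$ is controlled quadratically on $S$ by \eqref{hypa:maxloSc} and on $S^c$ by the tail bounds \eqref{hypa:maxSc} and \eqref{hypa:maxf*Sc}; the residual $\mathbb{E}^{\theta^*}\log\bigl(p_n^{(Q^*,f^*)}/p_n^{(Q^*,\tilde{f})}\bigr)$, now comparing two chains with the \emph{same} transition $Q^*$, can be majorised by the Kullback-Leibler divergence of the state-augmented laws, which factorises and reduces to the diagonal sum $n\sum_i\mu^{Q^*}_i\,KL(f^*_i,\tilde{f}_i)$ with matching indices. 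The local ($\chi^2$-type) bound then applies: the part on $S$ is controlled by \eqref{hypa:qui2S} (and \eqref{hypa:qui2}), while the $S^c$ part is absorbed by combining \eqref{hypa:maxf*Sc} with the second-moment bound \eqref{hypa:log2} via Cauchy-Schwarz. Summing over $t$ and using $\sum_i\mu^{Q^*}_i=1$ (with $\mu^{Q^*}_i\geq\underline{q}^*$) gives the $n\tilde{\epsilon}_n^2$ order for the emission part.

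The main obstacle is precisely recovering the quadratic rate $n\tilde{\epsilon}_n^2$ in place of the linear $n\tilde{\epsilon}_n$ produced by the naive state-sum bound: the mixture structure of the HMM predictive density forces a comparison across possibly distinct hidden states, destroying the diagonal $\chi^2$ structure that makes the i.i.d.\ Kullback-Leibler bound quadratic. The device $(\tilde{f}_j,S)$ restores matching indices, at the cost of the delicate bookkeeping of \eqref{hypa:log2}--\eqref{hypa:qui2S}. The remaining, more routine, labour is to track the constants through the forgetting argument so as to arrive at the explicit $C_K$ of \eqref{hyp:voisQ*}, and to check that the $O(1)$ contribution of the initial distribution $\mu^{Q^*}$ is negligible for $n$ large; assembling the emission and transition bounds then yields $KL(p_n^{\theta^*},p_n^\theta)\leq C_K\,n\tilde{\epsilon}_n^2$.
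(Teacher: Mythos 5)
Your high-level diagnosis of the difficulty (the device $(\tilde f_j,S)$ restoring matched indices and the quadratic rate) is accurate, but the proof skeleton you build around it has a genuine gap, and it sits exactly at the transition term. The quantity $\mathbb{E}^{\theta^*}\bigl[\log\bigl(p_n^{(Q^*,f)}/p_n^{(Q,f)}\bigr)\bigr]$ is \emph{not} a Kullback--Leibler divergence: the outer expectation is under $(Q^*,f^*)$ while the numerator has emissions $f$. Consequently the tensorization/data-processing argument that would reduce it to $(n-1)\sum_i \mu_i^{Q^*} KL(Q^*_{i,\cdot},Q_{i,\cdot}) \lesssim n\tilde{\epsilon}_n^2/\underline{q}^*$ does not apply: the cancellation that makes the row-wise KL quadratic in $\lVert Q-Q^*\rVert$ is destroyed by the measure mismatch. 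What survives without it is either the pointwise path bound $\log\bigl(p_n^{(Q^*,f)}/p_n^{(Q,f)}\bigr)\leq n\max_{i,j}\log(Q^*_{i,j}/Q_{i,j})\lesssim n\tilde{\epsilon}_n/\underline{q}^*$, or, via Jensen over hidden paths, a sum of smoothed transition frequencies computed under $(Q^*,f)$ --- which do not average to $\mu_i^{Q^*}Q^*_{i,j}$ when the data come from $(Q^*,f^*)$ --- weighted by $\log(Q^*_{i,j}/Q_{i,j})$; both routes give only the linear order $n\tilde{\epsilon}_n$, the very loss you correctly identify on the emission side. Filter forgetting alone does not repair this, since the stability bounds refined from \cite{DoMa01} and \cite{Do04} are \emph{linear} in $\lVert Q-Q^*\rVert$. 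This is precisely why the paper never separates the transition from the emissions: it decomposes the KL by the chain rule into per-step predictive divergences (Equation \eqref{eq:contKL}) and bounds the part on $S$ by a $\chi^2$-type inequality in which the filter differences enter \emph{squared} (Lemma \ref{lesumQ} and Equation \eqref{eq:contSQf}); squaring the linear filter-stability estimate is the mechanism that turns $\lVert Q-Q^*\rVert\leq\tilde{\epsilon}_n$ into a per-step contribution of order $\tilde{\epsilon}_n^2+\rho^{2(t-1)}$, summable to $C_K n\tilde{\epsilon}_n^2$. Your decomposition could in principle be completed (e.g.\ by adding a smoother-stability bound comparing posteriors under $(Q^*,f)$ and $(Q^*,f^*)$, together with the identity $\mathbb{E}^{\theta^*}\bigl[\mathbb{P}^{\theta^*}(X_{t-1}=i,X_t=j\mid Y_{1:n})\bigr]=\mu_i^{Q^*}Q^*_{i,j}$), but that is the missing core of the argument, not routine bookkeeping.

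A second, more localized gap concerns the terms involving $\tilde f_j$ on $S^c$. Your factorization introduces $\max_j\log(\tilde f_j/f_j)(Y_t)$ for \emph{every} observation, and your diagonal residual produces $\int f^*_i\log(f^*_i/\tilde f_i)\lambda$, so both must be controlled on $S^c$; you invoke \eqref{hypa:maxSc} and \eqref{hypa:maxf*Sc} there. But those assumptions bound masses, not integrated log-ratios, and nothing in \eqref{hypa:log2}--\eqref{hypa:qui2S} constrains $\log(\tilde f_j/f^*_j)$ outside $S$: for instance $\tilde f_j=f^*_j\exp(\tilde{\epsilon}_n^{-2})$ on a subset of $S^c$ of $f^*_i$-mass $\tilde{\epsilon}_n^2$ (with $f^*_j$ small enough there that \eqref{hypa:maxSc} still holds, and $f_j=f^*_j$ so that \eqref{hypa:log2} is untouched) is compatible with all the assumptions, yet makes your factored term of order $1$ per step, i.e.\ of order $n$ in total. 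The paper's proof is organized to avoid this: $S^c$ is split off \emph{before} $\tilde f$ is introduced (Equations \eqref{eq:contKL}--\eqref{eq:contSc}) and is handled with $f$ itself through the crude ratio bound \eqref{eq:majoration_frac_q_theta} plus Cauchy--Schwarz against \eqref{hypa:log2} and \eqref{hypa:maxf*Sc}, while $\tilde f_j$ only ever appears in integrals over $S$, where \eqref{hypa:maxloSc} and \eqref{hypa:qui2S} apply. Restricting your comparison density to coincide with $f^*_j$ on $S^c$ (i.e.\ replacing $\tilde f_j$ by $\tilde f_j\mathds{1}_S+f^*_j\mathds{1}_{S^c}$) would repair this part of your argument along the paper's lines.
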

As can be seen in the proof, Assumption \eqref{hypa:log2} can be replaced in Lemma \ref{lemma2v3} by the following weaker assumption:
\begin{equation}\label{hypa:maxloS}
\int_{S^c} f^*_i(y) \max_{1 \leq j \leq k}
\log \left(\frac{f^*_j(y)}{f_j(y)}\right)\lambda(dy) \leq C_K \tilde{\epsilon}^{2}_n,
\text{ for all } 1 \leq i \leq k
\end{equation}
which is implied by Assumptions \eqref{hypa:log2} and \eqref{hypa:maxf*Sc}. Note however that Assumption \eqref{hypa:log2} is used in the control of the variance $Var^{\theta^*} (L_n^{\theta^*}-L_n^{\theta})$ in Lemma \ref{lemma1}.

Lemma \ref{lemma1} gives a control of $Var^{\theta^*} (L_n^{\theta^*}-L_n^{\theta})$.
It is proved in Appendix \ref{se:preuve_KL2}.

\begin{lemma}\label{lemma1} Let $0<\epsilon_n$ be small enough and $u_n$ be defined by Equation \eqref{eq:def_un}
Assume that $\theta=(Q,f)\in \Theta$, is such that  Assumptions 
 \eqref{hypa:log2}, \eqref{hypa:qui2} and \eqref{hypa:maxf*Sc} hold 
 and 
\begin{equation}\label{hyp:voisinage_Q*}
\lVert Q - Q^* \rVert \leq \frac{\tilde{\epsilon}_n}{\sqrt{u_n}} .
\end{equation}
 Then there exists a positive constant $C_{KL^2}$ such that for all $\alpha \in (0,1)$ and $n\in \mathbb{N}$
\begin{equation*}
\begin{split}
Var^{\theta^*}(L_n^{\theta^*}-L_n^{\theta} )&=
\mathbb{E}^{\theta^*}  \left[
\left(\log \frac{p^{\theta^*}(Y_{1:n})}{p^{\theta}(Y_{1:n})}
       -\mathbb{E}^{\theta^*}\left(\log \frac{p^{\theta^*}(Y_{1:n})}{p^{\theta}(Y_{1:n})}\right)\right)^2\right]\\
         & \leq C_{KL^2} \frac{n}{\alpha} \left(\frac{\tilde{\epsilon}_n}{\sqrt{u_n}}\right)^{2-\alpha}.
\end{split}
\end{equation*}
\end{lemma}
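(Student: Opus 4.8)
The plan is to write the centred log-likelihood ratio as a telescoping sum of one-step predictive increments and to bound its variance by separating the diagonal ``pointwise'' contribution from the off-diagonal covariances, the latter being tamed by the geometric forgetting of the HMM filters. Concretely, I would use the predictive decomposition
\[
L_n^{\theta^*}-L_n^{\theta}=\sum_{t=1}^n g_t,\qquad g_t:=\log p^{\theta^*}(Y_t\mid Y_{1:t-1})-\log p^{\theta}(Y_t\mid Y_{1:t-1}),
\]
so that $Var^{\theta^*}(L_n^{\theta^*}-L_n^{\theta})=\sum_{t}Var^{\theta^*}(g_t)+2\sum_{s<t}Cov^{\theta^*}(g_s,g_t)$. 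Writing the conditional densities as predictive mixtures $p^{\theta}(Y_t\mid Y_{1:t-1})=\sum_{j}\omega_{t,j}f_j(Y_t)$ with $\omega_{t,j}=P^{\theta}(X_t=j\mid Y_{1:t-1})$ (and $\omega^*_t$ the analogue under $\theta^*$), I would split $g_t=A_t+B_t$, where $A_t=\log\big(\sum_i\omega^*_{t,i}f^*_i(Y_t)\big)-\log\big(\sum_i\omega^*_{t,i}f_i(Y_t)\big)$ isolates the emission mismatch at common weights and $B_t=\log\big(\sum_i\omega^*_{t,i}f_i(Y_t)\big)-\log\big(\sum_j\omega_{t,j}f_j(Y_t)\big)$ isolates the filter mismatch.

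For the moment control I would use that a ratio of two mixtures sharing a common probability weight vector lies between the extreme ratios, giving the pointwise bound $|A_t|\le\max_i\lvert\log(f^*_i(Y_t)/f_i(Y_t))\rvert$; integrating against the stationary marginal $\sum_m\mu^*_m f^*_m$ of $Y_t$ and invoking \eqref{hypa:log2} then yields $\mathbb{E}^{\theta^*}[A_t^2]\lesssim\tilde\epsilon_n^2/u_n$. For $B_t$ I would control the filter discrepancy $\lVert\omega^*_t-\omega_t\rVert$ through $\lVert Q-Q^*\rVert\le\tilde\epsilon_n/\sqrt{u_n}$ from \eqref{hyp:voisinage_Q*}, together with the emission proximities \eqref{hypa:qui2} and \eqref{hypa:maxf*Sc}, the geometric forgetting keeping the accumulated discrepancy of order $\tilde\epsilon_n/\sqrt{u_n}$; this gives $\mathbb{E}^{\theta^*}[B_t^2]\lesssim\tilde\epsilon_n^2/u_n$. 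Hence $\mathbb{E}^{\theta^*}[g_t^2]\lesssim\sigma_n^2$ with $\sigma_n^2:=\tilde\epsilon_n^2/u_n$, and by Lyapunov's inequality $\mathbb{E}^{\theta^*}\lvert g_t\rvert^{2-\alpha}\lesssim\sigma_n^{2-\alpha}$ for every $\alpha\in(0,1)$; the diagonal sum is then $\lesssim n\sigma_n^2\le(n/\alpha)\sigma_n^{2-\alpha}$.

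For the covariances I would first note that, for $\tilde\epsilon_n$ small, \eqref{hyp:voisinage_Q*} forces $Q\in\Delta_k^k(\underline{q}^*/2)$, so both filters forget their initialisation geometrically and the stationary observation process inherits the mixing bound $\phi(m)\le(1-\underline{q}^*)^m$. I would approximate the past-dependent increments $g_t$ by finite-memory surrogates depending only on $Y_{t-r:t}$, with a geometrically small forgetting error, and estimate the covariances of these surrogates by a covariance inequality for weakly dependent functionals, refining \cite{DoMa01} and \cite{Do04}. Since only a second moment of the log-ratios is available (through \eqref{hypa:log2}), an inequality with genuine decay cannot exploit $L^2$ directly; the remedy is a truncation/interpolation in which the free parameter $\alpha\in(0,1)$ balances the geometric forgetting against the moment order, producing a lagged bound of the shape $\lvert Cov^{\theta^*}(g_s,g_t)\rvert\lesssim(1-\underline{q}^*/2)^{c\alpha(t-s)}\sigma_n^{2-\alpha}$. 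Summing the geometric series gives $\sum_{m\ge1}(1-\underline{q}^*/2)^{c\alpha m}\lesssim 1/\alpha$, so the off-diagonal contribution is $\lesssim(n/\alpha)\sigma_n^{2-\alpha}$, and combining with the diagonal yields the claimed $C_{KL^2}(n/\alpha)(\tilde\epsilon_n/\sqrt{u_n})^{2-\alpha}$.

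The main obstacle is precisely this covariance step: converting the qualitative forgetting of the two filters $\omega_t,\omega^*_t$ into a quantitative, summable covariance bound when only an $L^2$ control of the unbounded log-ratios is at hand. This forces one to control both filters and their discrepancy simultaneously and uniformly over the entire past, and it is exactly here that the interpolation parameter $\alpha$ is unavoidable, explaining both the $1/\alpha$ factor and the sub-quadratic exponent $2-\alpha$; the refinements of \cite{DoMa01} and \cite{Do04} supply the covariance inequality that makes this precise.
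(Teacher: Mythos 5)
Your proposal is correct and follows essentially the same route as the paper: the same decomposition of $L_n^{\theta^*}-L_n^{\theta}$ into one-step predictive increments $Z_t$, the same emission/filter split of each increment for the second-moment bound, the same reliance on refinements of \cite{DoMa01} and \cite{Do04} to turn filter forgetting into covariance decay, and the same $\alpha$-interpolation between the geometric forgetting and the available $L^2$ moments, which is exactly where the factor $1/\alpha$ and the exponent $2-\alpha$ come from in the paper as well. The only difference is bookkeeping: the paper first splits each $Z_t$ into a martingale increment (whose cross-covariances vanish exactly) plus a predictable part $\mathbb{E}^{\theta^*}(Z_t\mid Y_{1:t-1})$ before applying Cauchy--Schwarz and the forgetting lemma, whereas you expand the variance--covariance sum directly; both reduce to the same two estimates (the paper's Lemmas on $\mathbb{E}^{\theta^*}(Z_t^2)$ and on $\mathbb{E}^{\theta^*}\lvert \mathbb{E}^{\theta^*}(Z_t\mid Y_{1:r-1})-\mathbb{E}^{\theta^*}(Z_t)\rvert^2$).
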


We now give the proof of Theorem \ref{th1}.

\begin{proof}[Proof of Theorem \ref{th1}]
This proof follows the lines of the proof of Theorem 1 of \cite{GhVa07niid} with two variants. These differences come from the tests (see Equations \eqref{eq:eq:proofth1_test*} and \eqref{eq:eq:proofth1_test}) and the control of the Kullback-Leibler neighborhoods (Equation \eqref{eq:Bntheta*}).

Using the tests built in the proof of Theorem 4 in \cite{GaRo12},  for all $M>0$, there exists $\psi_n\in [0,1]$ such that
\begin{equation}\label{eq:eq:proofth1_test*}
\mathbb{E}^{\theta^*}(\psi_n) \leq N \left(  \frac{\epsilon_n}{12},    \Delta_k^k( \, \underline{q}_n) \times \mathcal{F}_n  , D_\ell \right) \exp\left( \frac{-n \epsilon_n^2 {\underline{q}^*}^2 k^4 M^2}{128 l} \right)
\end{equation}
and 
\begin{equation}\label{eq:eq:proofth1_test}
\sup_{\substack{\theta \in \Delta_k^k(\,\underline{q}_n) \times \mathcal{F}_n \\D_\ell(\theta,\theta^*)\geq M  \epsilon_n / \underline{q}_n }} 
\mathbb{E}^{\theta}(1-\psi_n) 
\leq \exp\left(  -\frac{n \epsilon_n^2 k^2 M^2 }{128 l}  \right)
.
\end{equation}
Since 
\[
D_\ell(\theta, \tilde{\theta}) \leq \sum_{1 \leq i \leq k} \lvert \mu_i^Q -\mu_i^{\tilde{Q}}\rvert + k(\ell-1) \max_{1 \leq i,j \leq k} \lvert Q_{i,j} - \tilde{Q}_{i,j} \rvert + \ell d(f,\tilde{f}),
\]
we obtain
\begin{equation}\label{eq:eq:proofth1_entropie}
N \left(  \frac{\epsilon_n}{12},    \Delta_k^k( \, \underline{q}_n) \times \mathcal{F}_n  , D_\ell \right)
\leq  \left( \frac{24\ell k(k-1)}{\epsilon_n} \right)^{k(l-1)} 
N \left(  \frac{\epsilon_n}{24 \ell},     \mathcal{F}_n  , d \right)
\end{equation}
which leads to
\begin{equation}\label{E-theta*-phin}
\mathbb{E}^{\theta^*}(\psi_n) \lesssim \exp(-n \epsilon_n^2 (M^2\tilde{C} - C'))
\end{equation}
for some constant $\tilde{C}$, using Assumption \eqref{hyp:Fn}.
We replace Equation (8.4) of \cite{GhVa07niid} by Equation \eqref{E-theta*-phin}. Equation (8.5) of \cite{GhVa07niid} is replaced by Equation \eqref{eq:eq:proofth1_test}.

Let $\alpha_n$ be a sequence tending to $0$, to be specified later. We define
\begin{multline}\label{eq:Bntheta*}
\mathcal{B}_n(\theta^*)\\
:= 
 \left\{ 
\theta :
KL(p_n^{\theta^*},p_n^{\theta}) \leq C_K n \tilde{\epsilon}_n^2 , ~
Var^{\theta^*}(L_n^{\theta^*}-L_n^{\theta})
\leq \frac{C_{KL^2} n }{\alpha_n} \left(\frac{\tilde{\epsilon}_n}{\sqrt{u_n}}\right)^{2-\alpha_n}
 \right\} 
\end{multline}
in the place of  $B_n(\theta^*,\bar{\epsilon}_n,2)$ in the notation of \cite{GhVa07niid},
setting $\bar{\epsilon}_n=\sqrt{C_{K}}\tilde{\epsilon}_n$.
Using Assumptions \eqref{hyp:KL} and \eqref{hyp:voisQ*}, and Lemmas \ref{lemma2v3} and \ref{lemma1}
\begin{equation*}
\Pi\left(\mathcal{B}_n\left(\theta^*\right)\right)  \gtrsim \exp(-(C_Q+C_f) n \tilde{\epsilon}_n^2) \gtrsim \exp(-C n \tilde{\epsilon}_n^2).
\end{equation*}

By choosing
\begin{enumerate}[(i)]
\item $\alpha_n=1/\log(n)$, for all $n \in \mathbb{N}$; if $\tilde{\epsilon}_n\gtrsim n^{-s}$ for some $s<1/2$,
\item $\alpha_n=\log(\log(n))/\log(n)$, for all $n \in \mathbb{N}$; otherwise,
\end{enumerate} 
and following the lines of the proof of Lemma 10 of \cite{GhVa07niid},
\begin{equation}\label{eq:P_theta*_D_n}
 P^{\theta^*} \big(\mathcal{D}_n< \Pi(B_n(\theta^*)) \exp(-2 C_K n \tilde{\epsilon}_n^2)\big) =O\left( \frac{n (\tilde{\epsilon}_n/\sqrt{u_n})^{2-\alpha_n}}{ \alpha_n  (n\tilde{\epsilon}_n^{2})^2} \right)=o(1),
\end{equation}
with $\mathcal{D}_n=\int_{\mathcal{B}_n(\theta^*)} p_n^\theta(Y_{1:n})/p_n^{\theta^*}(Y_{1:n}) \Pi(d\theta)$.
Following the lines of the proof of Theorem 1 of \cite{GhVa07niid}  with the above modifications, we obtain,
\begin{equation}
\begin{split}
&\mathbb{E}^{\theta^*}\left( \Pi\left( \theta \in \Delta_k^k(\,\underline{q}_n) \times \mathcal{F}_n  : D_\ell(\theta, \theta^*) \geq M \frac{\epsilon_n}{\underline{q}_n}  \big| Y_{1:n} \right) \right)
=o(1)
 \end{split}
\end{equation} for $M$ large enough.

This concludes the proof since 
\[\mathbb{E}^{\theta^*}\left(\Pi((  \Delta_k^k(\,\underline{q}_n) \times \mathcal{F}_n )^c | Y_{1:n})\right)=o(1)
\]
using Equation \eqref{eq:P_theta*_D_n} and Lemma 1 of \cite{GhVa07niid}  with 
\[
\Pi((  \Delta_k^k(\,\underline{q}_n) \times \mathcal{F}_n )^c)=o( \exp(-2n \bar{\epsilon}_n^{2})  \Pi\left(  \mathcal{B}_n\left(\theta^*\right)\right)
\]
as soon as $C>2 C_K +C_Q + C_f$ (using Assumptions \eqref{hyp:Fn} and \eqref{hyp:qn}).
\end{proof}

\section{Applications}\label{se:applications}

In this section, we apply Theorem \ref{th1} to different priors and different classes of emission  density functions.
In all examples treated in Section \ref{se:applications}, the prior on emission distributions is chosen to be a product of a distribution $\Pi_f$ on $\mathcal{F}$:
\begin{equation}\label{eq:pif_produit}
\Pi_f^{(k)} = (\Pi_f)^{\otimes k}.
\end{equation}
However, Theorem \ref{th1} can also be applied to other priors such as priors restricted to translated emission density functions, the translation HMM is described in Equation \eqref{eq:HMM_translatee}.

 In Section \ref{se:discrete}, we consider discrete observations, i.e. $\mathcal{Y}=\mathbb{N}$. We assume that the prior $\Pi_f$ on each emission distributions is a Dirichlet process. We compute the rate $\epsilon_n$ obtained with this prior when the true emission distributions have an exponential decay. 
In Section \ref{se:continuous}, the observations are assumed to live in $\mathbb{R}$ and the emission distributions are supposed to be absolutely continuous with respect to the Lebesgue measure. We consider a Dirichlet process mixture of Gaussian distributions  as a prior $\Pi_f$ on each emission density functions. We compute the rate $\epsilon_n$ obtained with this prior when the emission density functions belong to functional classes of $\beta$-Hölder types.

 We always assume that 
\begin{enumerate}
\labitem{(Q0)}{hyp:Q0} $\Pi_Q$ is absolutely continuous with respect to the Lebesgue measure on $\Delta^k_k$ with density $\pi_Q$, $\pi_Q(Q^*)>0$ and $\pi_Q(Q)=\pi_q(Q_{1,\cdot}) \dots \pi_q(Q_{k,\cdot})$, for all $Q  \in \Delta^k_k$ and where $Q_{i,\cdot}$ denotes the $i$-th row of $Q$.
\end{enumerate}
In Sections \ref{se:discrete} and \ref{se:continuous}, we consider three different priors $\Pi_Q$ on transition matrices which corresponds to three different decays of $\pi_Q$ near the boundary of $\Delta_k^k$:
\begin{enumerate}
\labitem{(Q1)}{hyp:pi_Q_exp} exponential tail: 
\[\pi_q(u_1,\dots , u_k) \lesssim \exp(-\alpha_1/u_1) \dots  \exp(-\alpha_k/u_k),\]
 for all $u\in \Delta_k$, for some positive constants $\alpha_{i},$ $1 \leq i \leq k$,
\labitem{(Q2)}{hyp:pi_Q_exp_exp} exponential of exponential tail: 
\[
\pi_q(u_1,\dots , u_k) \lesssim \exp(-\beta_1\exp(u_1^{-\alpha_1})) \dots   \exp(-\beta_k\exp(u_k^{-\alpha_k}))
,\]
for all $u\in \Delta_k$, for some positive constants $\alpha_{i}$ and $\beta_{i}$, $1 \leq i \leq k$,
\labitem{(Q3)}{hyp:pi_Q_tronque} truncated distribution: 
\[\Pi_Q(\Delta_k^k(\underline{q}))=1,\]
 for some positive $\underline{q}$.
\end{enumerate}
Note that Assumption \ref{hyp:pi_Q_tronque} implies Assumption \ref{hyp:pi_Q_exp_exp} which implies \ref{hyp:pi_Q_exp}.
 In \citep{GaRo12}, together with priors of type  \ref{hyp:pi_Q_exp},  more general priors  are also considered, since they assume
\[\pi_q(u_1,\dots , u_k) \lesssim u_1^{\alpha_1-1} \dots u_k^{\alpha_k-1},\]
 for all $u\in \Delta_k$, for some positive constants $\alpha_{i}$, $1 \leq i \leq k$. \cite{GaRo12} show that $\underline{q}_n$, in Assumption \eqref{hyp:qn} and Theorem \ref{th1}, is equal to a power of $1/n$ when the emission distributions belong to a parametric family. We do not consider this type of priors since they lead to deteriorated rates $\epsilon_n/ \underline{q}_n$. Under \ref{hyp:pi_Q_exp}, \citep{GaRo12} obtain $\underline{q}_n$ equal to a power of $1/(\log n)$ when the emission distributions belong to a parametric family. We obtain the same rate $\underline{q}_n$ in the case of discrete observations and emission distributions with exponential decay (more generally, it would be the case as soon as $\epsilon_n=n^{-1/2} \log(n)^t$ for some positive $t$). However, in the case of emission distributions absolutely continuous with respect to Lebesgue measure (Section \ref{se:continuous}), Assumption \eqref{hyp:qn} leads to a rate $\underline{q}_n$ at least polynomial in $1/n$ with priors satisfying \ref{hyp:pi_Q_exp}. 
While priors verifying \ref{hyp:pi_Q_exp_exp} or \ref{hyp:pi_Q_tronque} lead to a rate $\underline{q}_n$ equal to a power of $1/\log n$ as soon as $\epsilon_n$ is a power of $n$; and thus do not deteriorate the posterior concentration rate (up to $\log (n)$).  Note that Assumptions \ref{hyp:Q0} and \ref{hyp:pi_Q_tronque} are compatible if and only if $\underline{q}\leq \underline{q}^*$. Thus, the use of a prior verifying \ref{hyp:pi_Q_tronque} requires a knowledge of a lower bound $\underline{q}^*$ of $\min_{1 \leq i,j \leq k} Q^*_{i,j}$. 

 In \citep{Ve15}, posterior consistency is derived under Assumption \ref{hyp:pi_Q_tronque} while weaker conditions on $f^*$ and $\Pi_f^{(k)}$ (see  Assumptions (A0), (A1) and (A2) in \citep{Ve15}) compared to \eqref{hyp:KL}, \eqref{hyp:Fn} and \eqref{hyp:voisQ*}. Here, we manage to obtain posterior concentration rates under Assumptions \ref{hyp:pi_Q_exp} and \ref{hyp:pi_Q_exp_exp} which are weaker than Assumption \ref{hyp:pi_Q_tronque} because stronger conditions on $\Pi_f^{(k)}$ and $f^*$ are assumed.

\subsection{Discrete observations}\label{se:discrete}

In this section, we apply Theorem \ref{th1} to the case of discrete observations so that $\lambda$ is the count measure on $\mathbb{N}$. 
HMMs with discrete observations are used in different applications, as in \citep{BoZuHeCaLa13} for animal abundance estimation, in \citep{GaClRo13}for gene expression identification, or in \citep{LiJoWiCh14} for neural representation of spatial navigation, to cite a few. 

In the framework of discrete distribution estimation with i.i.d. observations, \citep{HaJiWe14} have proved that no rates can be obtained with the $L_1$ loss without constraint on the considered distributions. Moreover, they obtain a minimax rate proportional to $1/ \log n$ over the set $ \{f \in \mathcal{F}: ~ \sum_{ i \in \mathbb{N}} -f(i) \log (f(i)) \leq C\}$. Rates of convergence are more widely studied in the case of the $L_2$ norm, for instance with monotony constraint in \citep{JaWe09}, with log-concave constraint in \citep{BaJaRuPa13}, with convex constraint in \citep{DuHuKoRo13} and with envelope constraint in \citep{BoGa09}.

 In the non parametric Bayesian framework, the Dirichlet process is a very popular prior. Here, we consider a Dirichlet process $DP(G)$ on the emission distributions, with $G$ some finite positive measure on $\mathbb{N}$:
\[
\Pi_f =  DP(G).
\] \citep{CaDu11} propose other priors for discrete observations based on discretization of continuous mixtures of kernels and gives an overview of the priors used in the case of discrete observations.

We first verify Assumptions \eqref{hyp:KL}, \eqref{hyp:Fn} and \eqref{hyp:voisQ*}. As it can be seen from the proof (see Lemma \ref{prop:discret} in Appendix \ref{ap:discret}), we need a heavy tail condition on $G$:
\begin{enumerate}
\labitem{(P)}{eq:queue_G_pol} there exists positive constants $a\leq A$ and $\alpha\geq 2$ such that for all $1\leq j \leq k$ and for all $l\in \mathbb{N}$
\begin{equation*}
 a l^{-\alpha} \leq G(l)\leq A l^{-\alpha}.
\end{equation*}
\end{enumerate}

Here, we consider the following class of discrete distributions which is based on an envelope constraint:
\begin{multline}\label{eq:class_exp}
\mathcal{D}(m,c,K)=\Big\{f \in \mathcal{F}:  f(l) \leq d \exp(-c l^m), ~ \sum_{l\leq N} \frac{-\log(f(l))}{l} \lesssim N^{K},\\ \text{ for all } N \text{ large enough}\Big\},
\end{multline} 
where $c$, $K$ and $m$ are positive constants.
We also consider the following assumption linking the tails of the true emission distributions: 
\begin{enumerate}
\labitem{(I)}{hyp:dis_exp_fifj} there exists $\delta>0$ such that for all $N$ large enough and all $1\leq i,j \leq k$,
\begin{equation*}
 \sum_{l\geq N} f^*_i(l) \log^2\left(f^*_j(l)\right) \lesssim \exp(-N^m(c-\delta)).
\end{equation*}
\end{enumerate}

Under these assumptions, we obtain the following rates $\tilde{\epsilon}_n$ and $\epsilon_n$:

\begin{theorem}\label{th:epsilon_discret}
Assume there exist positive constants $c$, $K$ and $m$ such  that for all $1\leq j \leq k$, $f^*_j \in \mathcal{D}(m,c,K)$ and that Assumptions \ref{hyp:Q0}, \ref{hyp:dis_exp_fifj} and \ref{eq:queue_G_pol} hold.

Then Assumptions \eqref{hyp:KL}, \eqref{hyp:Fn} and \eqref{hyp:voisQ*}  hold with 
\[
\tilde{\epsilon}_n=\frac{1}{\sqrt{n}} (\log n)^{t_0} \quad \text{ and } \quad \epsilon_n=\frac{1}{\sqrt{n}} (\log n)^t,
\]
where $t>4t_0$ and $t_0 \geq 1/2 \max ( 1/m + 1, K/m)$.
\end{theorem}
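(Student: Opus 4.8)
The plan is to verify the three abstract conditions \eqref{hyp:KL}, \eqref{hyp:Fn} and \eqref{hyp:voisQ*} of Theorem \ref{th1} separately for the product Dirichlet prior $\Pi_f^{(k)}=(DP(G))^{\otimes k}$, tracking how each one forces a power of $\log n$ into the rates. Throughout I set the truncation level $N_n\asymp(\log n)^{1/m}$ and $S=\{1,\dots,N_n\}$. Since each $f_j^*\in\mathcal{D}(m,c,K)$ satisfies $f_j^*(l)\le d\exp(-cl^m)$, this choice makes the tail mass $\sum_{l>N_n}f_j^*(l)\lesssim\exp(-cN_n^m)\lesssim\tilde\epsilon_n^2$ after adjusting the constant in $N_n$, which is exactly what the tail conditions \eqref{hypa:maxSc} and \eqref{hypa:maxf*Sc} require.

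The \emph{verification of \eqref{hyp:KL}} is the core of the argument. I would take $B_n$ to be the set of $f=(f_1,\dots,f_k)$ such that each $f_j$ lies in a small multiplicative neighbourhood of $f_j^*$ on $S$, say $|f_j(l)-f_j^*(l)|\le\xi_n f_j^*(l)$ for $l\le N_n$ with $\xi_n$ a suitable power of $\tilde\epsilon_n$, together with a control on $\sum_{l>N_n}f_j(l)$. On such a set the conditions \eqref{hypa:log2} and \eqref{hypa:qui2} split into a contribution on $S$, where multiplicative closeness makes $\log(f_j^*/f_j)$ uniformly small and gives a bound quadratic in $\xi_n$, and a contribution on $S^c$, where the exponential decay of $f_i^*$ together with Assumption \ref{hyp:dis_exp_fifj} controls $\sum_{l>N_n}f_i^*(l)\log^2 f_j^*(l)$. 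The auxiliary functions $\tilde f_j$ of \eqref{hypa:maxSc}, \eqref{hypa:maxloSc}, \eqref{hypa:qui2S} would be $f_j^*$ truncated to $S$ and renormalised (or a fixed polynomial-tailed reference), which decouples the tail of $f^*$ from the much smaller tail of a Dirichlet draw. The decisive step is the prior mass lower bound $\Pi_f^{(k)}(B_n)\gtrsim\exp(-C_f(\log n)^{2t_0})$: projecting $DP(G)$ onto the partition $\{1\},\dots,\{N_n\},\{>N_n\}$ yields a Dirichlet distribution with parameters $G(l)\asymp l^{-\alpha}$, and a standard Dirichlet small-ball estimate bounds $-\log\Pi_f(\text{neighbourhood})$ by a ``precision'' term of order $N_n\log(1/\xi_n)\asymp(\log n)^{1/m+1}$ plus an ``envelope'' term governed by $\sum_{l\le N_n}\frac{-\log f_j^*(l)}{l}\lesssim N_n^K\asymp(\log n)^{K/m}$, the $1/l$ weight arising from the tail mass $G(\{\ge l\})\asymp l^{-(\alpha-1)}\lesssim l^{-1}$ under \ref{eq:queue_G_pol} with $\alpha\ge2$. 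Choosing $2t_0\ge\max(1/m+1,K/m)$ makes $(\log n)^{2t_0}=n\tilde\epsilon_n^2$ absorb both terms, which is exactly the stated constraint on $t_0$.

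For \emph{\eqref{hyp:Fn}} I would take $\mathcal{F}_n$ to consist of vectors $f$ whose coordinates are supported, up to mass $\epsilon_n$, on $\{1,\dots,M_n\}$ for a threshold $M_n$ that is a power of $\log n$ (and possibly satisfy the envelope constraint of $\mathcal{D}$). The complement mass $\Pi_f^{(k)}(\mathcal{F}_n^c)$ is controlled by the tail of $DP(G)$, which under \ref{eq:queue_G_pol} is summable, and can be made $o(\exp(-Cn\tilde\epsilon_n^2))$ by enlarging $M_n$. The covering number of distributions essentially supported on $M_n$ points is $N(\epsilon_n/12,\mathcal{F}_n,d)\lesssim\exp(C'M_n\log(1/\epsilon_n))$, and since $\log(1/\epsilon_n)\asymp\log n$ this is $\exp(C'(\log n)^{\kappa})$ for some $\kappa$, so the requirement $N(\cdot)\lesssim\exp(C'n\epsilon_n^2)=\exp(C'(\log n)^{2t})$ is met; the freedom $t>4t_0$ leaves room to satisfy it together with the constant constraint $C_Q+C_f+C_K<C$. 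Finally \emph{\eqref{hyp:voisQ*}} is immediate from \ref{hyp:Q0}: since $\pi_Q$ is continuous and strictly positive at $Q^*$, the ball $\{\|Q-Q^*\|\le\tilde\epsilon_n/\sqrt{u_n}\}$ has prior mass $\gtrsim(\tilde\epsilon_n/\sqrt{u_n})^{k(k-1)}$, which exceeds $\exp(-C_Q n\tilde\epsilon_n^2)$ for any fixed $C_Q>0$ because $n\tilde\epsilon_n^2=(\log n)^{2t_0}\to\infty$ polynomially faster than $\log(1/\tilde\epsilon_n)$; one then simply chooses $C_Q$ small.

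The main obstacle is the prior mass bound in \eqref{hyp:KL}: one must show that a Dirichlet draw can simultaneously track $f_j^*$ on all of $S$, including the coordinates where $f_j^*(l)\asymp\exp(-cl^m)$ is extremely small, while keeping its tail under control, and that the cost is no worse than $\exp(-C_f(\log n)^{2t_0})$. This is exactly where the two structural assumptions interlock: the polynomial lower bound $G(l)\gtrsim l^{-\alpha}$ guarantees that the Dirichlet places enough mass near the tiny values $f_j^*(l)$, while the envelope part of $\mathcal{D}(m,c,K)$ keeps $\sum_{l\le N_n}\frac{-\log f_j^*(l)}{l}$ from exceeding $N_n^K$, and Assumption \ref{hyp:dis_exp_fifj} is what keeps the $\log^2$ tail contributions in \eqref{hypa:log2} negligible. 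The remaining bookkeeping, namely splitting each Kullback--Leibler-type sum across $S$ and $S^c$ and the careful use of $\tilde f_j$ in \eqref{hypa:maxSc}--\eqref{hypa:qui2S}, is routine once these mass and tail estimates are in place.
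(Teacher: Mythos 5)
Your verification of \eqref{hyp:KL} and \eqref{hyp:voisQ*} follows essentially the paper's own route: the paper's Lemma \ref{prop:discret} is precisely the Dirichlet small-ball estimate you sketch (multiplicative neighbourhoods of $f^*_j$ on $S_L=\{1,\dots,L_n\}$ with $L_n\asymp(\log n)^{1/m}$, the prior mass bounded below through $\prod_{l\le L_n}G(l)$ and $\sum_{l\le L_n}G(l)\log f^*_j(l)$, the latter controlled by the envelope constraint of $\mathcal{D}(m,c,K)$ because $G(l)\lesssim l^{-1}$ under \ref{eq:queue_G_pol}), and \eqref{hyp:voisQ*} is the same two-line argument from \ref{hyp:Q0}. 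One detail: the paper takes $\tilde f_j=f_j$, which makes \eqref{hypa:maxloSc} identically zero, whereas your truncated-and-renormalised $f^*_j$ forces the multiplicative radius $\xi_n$ down to order $\tilde\epsilon_n^{2}$ for \eqref{hypa:maxloSc} to hold; this is repairable (only $\log(1/\xi_n)$ enters the small-ball exponent), but it is a point you would have had to notice.

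The genuine gap is in \eqref{hyp:Fn}. Your sieve --- vectors of distributions carrying all but $\epsilon_n$ of their mass on $\{1,\dots,M_n\}$ with $M_n$ a power of $\log n$ --- cannot meet the prior-mass requirement, and the failure is forced by the very assumption \ref{eq:queue_G_pol} that the KL part needs. Under $DP(G)$ the tail weight $W=\sum_{l>M_n}f(l)$ is $\mathrm{Beta}\bigl(G(\{l>M_n\}),G(\{l\le M_n\})\bigr)$-distributed with $G(\{l>M_n\})\asymp M_n^{1-\alpha}$, so
\begin{equation*}
\Pi_f\bigl(W>\epsilon_n\bigr)\;\geq\;\Pi_f\bigl(W>1/2\bigr)\;\gtrsim\;G(\{l>M_n\})\;\asymp\;M_n^{1-\alpha},
\end{equation*}
which decays only \emph{polynomially} in $M_n$. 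Since $n\tilde\epsilon_n^2=(\log n)^{2t_0}$, the condition $\Pi_f^{(k)}(\mathcal{F}_n^c)=o(\exp(-Cn\tilde\epsilon_n^2))$ forces $M_n\gtrsim\exp\bigl(c(\log n)^{2t_0}\bigr)$; but then the $L_1$-entropy of distributions on $M_n$ points, of order $M_n\log(1/\epsilon_n)$, is vastly larger than $n\epsilon_n^2=(\log n)^{2t}$. For your family of sieves the two halves of \eqref{hyp:Fn} are thus incompatible: ``enlarging $M_n$'' repairs the mass but destroys the entropy. The paper escapes this by using Proposition 2 of \citep{ShToGh13} (Lemma \ref{le:b_c_discret}): its sieve $\mathcal{H}_{H,A,\epsilon}$ constrains the \emph{number of sticks} $H=(\log n)^{t-2t_0}$ carrying mass $1-\epsilon$, while letting the atom locations range over the huge interval $[0,A]$ with $A=\exp\bigl((\log n)^{2t_0+t/2}\bigr)$. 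The entropy is then only of order $kH\bigl(\log A+\log(1/\epsilon_n)\bigr)\asymp(\log n)^{3t/2}\lesssim n\epsilon_n^2$, and the complement mass is bounded by $kH\,G((A,\infty))+k\bigl(eG(\mathbb{N})\log(1/\epsilon_n)/H\bigr)^{H}$: the first term is negligible because $A$ is stretched-exponentially large (here the polynomial tail of $G$ hurts, but $A$ is chosen accordingly), and the second is $o(\exp(-Cn\tilde\epsilon_n^2))$ precisely because $H=(\log n)^{t-2t_0}$ dominates $(\log n)^{2t_0}$, i.e.\ because $t>4t_0$. This is where the theorem's constraint $t>4t_0$ actually comes from; your proposal treats it as spare room in the bookkeeping, which is the symptom of the missing sieve construction.
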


Theorem \ref{th:epsilon_discret} leads to the following posterior concentration rates ($\epsilon_n/\underline{q}_n$) which are minimax (up to $\log n$):

\begin{corollary}\label{co:vit_dis}
Assume there exist positive constants $c$, $K$ and $m$ such  that for all $1\leq j \leq k$, $f^*_j \in \mathcal{D}(m,c,K)$ and that Assumptions \ref{hyp:Q0}, \ref{hyp:dis_exp_fifj} and \ref{eq:queue_G_pol} hold. Moreover suppose that $\Pi_Q$ satisfies
\begin{itemize}
\item   \ref{hyp:pi_Q_exp}, then the posterior concentrates with rate $\frac{1}{\sqrt{n}} (\log n)^{t+2t_0}$;
\item \ref{hyp:pi_Q_exp_exp}, then the posterior concentrates with rate $\frac{1}{\sqrt{n}} (\log n)^t $;
\item  \ref{hyp:pi_Q_tronque}, then the posterior concentrates with rate $\frac{1}{\sqrt{n}} (\log n)^t $;
\end{itemize}
with $t>4t_0$ and $t_0 \geq 1/2 \max ( 1/m + 1, K/m)$.
\end{corollary}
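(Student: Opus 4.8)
The plan is to combine Theorem \ref{th:epsilon_discret}, which already establishes Assumptions \eqref{hyp:KL}, \eqref{hyp:Fn} and \eqref{hyp:voisQ*} with the stated rates $\tilde{\epsilon}_n = n^{-1/2}(\log n)^{t_0}$ and $\epsilon_n = n^{-1/2}(\log n)^t$, with Theorem \ref{th1}, whose conclusion gives concentration at rate $\epsilon_n/\underline{q}_n$. Since Theorem \ref{th:epsilon_discret} supplies three of the four hypotheses of Theorem \ref{th1} and these do not involve $\Pi_Q$ beyond Assumption \ref{hyp:Q0} and the neighbourhood condition \eqref{hyp:voisQ*}, the only remaining work is to verify Assumption \eqref{hyp:qn} for each of the three priors \ref{hyp:pi_Q_exp}, \ref{hyp:pi_Q_exp_exp}, \ref{hyp:pi_Q_tronque}, and to read off the corresponding sequence $\underline{q}_n$. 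The final rate in each case is then $\epsilon_n / \underline{q}_n$.

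First I would recall that Assumption \eqref{hyp:qn} requires a sequence $\underline{q}_n \in (0,1/k]$ with $\Pi_Q\big((\Delta_k^k(\underline{q}_n))^c\big) = o(\exp(-Cn\tilde{\epsilon}_n^2))$. Here $n\tilde{\epsilon}_n^2 = (\log n)^{2t_0}$, so the right-hand side is $\exp(-C(\log n)^{2t_0})$, and the task reduces to bounding the prior mass that $\Pi_Q$ places within distance $\underline{q}_n$ of the boundary of the simplex. Because $\pi_Q$ factorizes over rows by Assumption \ref{hyp:Q0}, and each row density is controlled near the boundary by the tail condition of the relevant prior class, the mass $\Pi_Q((\Delta_k^k(\underline{q}_n))^c)$ is, up to multiplicative constants and a union bound over the $k^2$ entries, governed by $\int_0^{\underline{q}_n}$ of the marginal tail of $\pi_q$. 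Under \ref{hyp:pi_Q_exp} this integral behaves like $\exp(-\alpha/\underline{q}_n)$; to make this $o(\exp(-C(\log n)^{2t_0}))$ one needs $\alpha/\underline{q}_n \gtrsim (\log n)^{2t_0}$, i.e. $\underline{q}_n$ of order $(\log n)^{-2t_0}$, which multiplies $\epsilon_n = n^{-1/2}(\log n)^t$ to give the rate $n^{-1/2}(\log n)^{t+2t_0}$. Under \ref{hyp:pi_Q_exp_exp} the boundary mass behaves like $\exp(-\beta \exp(\underline{q}_n^{-\alpha}))$, so a choice of $\underline{q}_n$ of order $(\log\log n)^{-1/\alpha}$ (or any slowly varying sequence tending to $0$) already makes the exponent dominate $(\log n)^{2t_0}$; since this $\underline{q}_n \to 0$ slower than any power of $\log n$, one may in fact take $\underline{q}_n$ bounded below by a constant asymptotically, so $\epsilon_n/\underline{q}_n$ stays at order $n^{-1/2}(\log n)^t$. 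Under \ref{hyp:pi_Q_tronque} one simply takes $\underline{q}_n = \underline{q}$ constant, the excess mass is exactly $0$, and the rate is again $n^{-1/2}(\log n)^t$.

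The final step is to confirm that $\underline{q}_n \le 1/k$ holds for $n$ large (automatic since $\underline{q}_n \to 0$ in the first two cases and requires only $\underline{q} \le 1/k$ in the truncated case, which is consistent with $\underline{q} \le \underline{q}^*$) and to invoke Theorem \ref{th1} to conclude concentration at rate $M\epsilon_n/\underline{q}_n$; the minimax optimality up to logarithmic factors follows by comparison with the $1/\sqrt{n}$ parametric lower bound. I expect the only genuinely delicate point to be the accounting of the boundary integral of $\pi_q$ under \ref{hyp:pi_Q_exp}: one must check that pushing $\underline{q}_n$ down to order $(\log n)^{-2t_0}$ is both necessary (the exponential-tail mass cannot be made negligible for larger $\underline{q}_n$) and sufficient (the resulting deterioration factor is exactly $(\log n)^{2t_0}$), and that the union bound over the $k^2$ entries together with the multiplicative constants does not alter the logarithmic exponent. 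The other two cases are comparatively immediate once this bookkeeping is set up.
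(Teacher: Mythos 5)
Your overall route is exactly the paper's: the corollary is obtained by feeding Theorem \ref{th:epsilon_discret} into Theorem \ref{th1}, so that only Assumption \eqref{hyp:qn} remains to be checked for each of the three prior classes, and the final rate is $\epsilon_n/\underline{q}_n$. Your treatment of \ref{hyp:pi_Q_exp} (boundary mass $\lesssim \exp(-\alpha/\underline{q}_n)$, forcing $\underline{q}_n \asymp (\log n)^{-2t_0}$ since $n\tilde{\epsilon}_n^2=(\log n)^{2t_0}$, hence the rate $n^{-1/2}(\log n)^{t+2t_0}$) and of \ref{hyp:pi_Q_tronque} (take $\underline{q}_n=\underline{q}$, the excess mass is exactly zero) is correct; the remark that one must also check ``necessity'' of the choice of $\underline{q}_n$ under \ref{hyp:pi_Q_exp} is superfluous, since the corollary only asserts an upper bound on the rate.

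There is, however, a genuine error in the \ref{hyp:pi_Q_exp_exp} case. You claim that, because $\underline{q}_n$ only needs to tend to $0$ slower than any power of $\log n$, ``one may in fact take $\underline{q}_n$ bounded below by a constant asymptotically''. This is false for a non-truncated prior: for any fixed $q_0>0$, a prior satisfying \ref{hyp:Q0} and \ref{hyp:pi_Q_exp_exp} but not \ref{hyp:pi_Q_tronque} puts a \emph{fixed positive} mass on $(\Delta_k^k(q_0))^c$, and a positive constant is never $o(\exp(-Cn\tilde{\epsilon}_n^2))$. What the double-exponential tail actually gives is that the mass of $\{Q_{i,j}<q\}$ is $\lesssim \exp(-\beta_j\exp(q^{-\alpha_j}))$, and making this $o(\exp(-C(\log n)^{2t_0}))$ forces $q^{-\alpha_j}\gtrsim \log\log n$, i.e.\ the best admissible choice is $\underline{q}_n$ of order $(\log\log n)^{-1/\alpha}$ (with $\alpha=\min_j\alpha_j$), and no constant choice is admissible. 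Theorem \ref{th1} then yields the rate $n^{-1/2}(\log n)^{t}(\log\log n)^{1/\alpha}$, which is \emph{not} $O(n^{-1/2}(\log n)^t)$. The second bullet of the corollary is nevertheless correct, and the repair is one line: since $t>4t_0$ is strict, run the whole argument with some $t''\in(4t_0,t)$; then
\[
n^{-1/2}(\log n)^{t''}(\log\log n)^{1/\alpha}=o\big(n^{-1/2}(\log n)^{t}\big),
\]
and concentration at a faster rate implies concentration at the claimed rate $n^{-1/2}(\log n)^t$. Without this (or an equivalent) absorption step, your proof of the \ref{hyp:pi_Q_exp_exp} case does not go through.
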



\subsection{Dirichlet process mixtures of Gaussian distributions--Adaptivity to Hölder function classes}\label{se:continuous}

Dirichlet process mixtures of Gaussian distributions are commonly used to model densities on $\mathbb{R}$ or $\mathbb{R}^d$. In particular, there exist efficient algorithms to sample from the posterior distribution in the i.i.d. framework.
In the translation HMM:
\begin{equation}\label{eq:HMM_translatee}
Y_t=m_{X_t} + \epsilon_t,
\end{equation}
where $\epsilon_t \overset{\text{i.i.d.}}{\sim} g \lambda$, $m_j \in \mathbb{R}$ and $X_t$ is a Markov chain with translation matrix $Q$;
\cite{YaPaRoHo11} use a Dirichlet process mixtures of Gaussian distributions on $g$.
In the context of i.i.d. observations
posterior concentration rates have been derived with such prior models in \cite{GhVa07dir}, \cite{KrRoVa10} and \cite{ShToGh13}. In the framework of HMMs, we propose to apply Theorem \ref{th1} when $\Pi_f$ is a Dirichlet process mixture of Gaussian distributions.

We assume that the reference measure $\lambda$ is the Lebesgue measure on $\mathbb{R}$. We also assume that the prior on $\mathcal{F}^k$ is a product of Dirichlet process mixture of Gaussian distributions:  
\begin{equation*}
\begin{split}
 &Y_t|X_t=j \sim f_j,  \quad f_j=\int \phi_\sigma(\cdot - \mu) dP_j(\mu),\\
 & P_j \overset{\text{i.i.d.}}{\sim} DP(G), \text{ for all }{~1 \leq j \leq k}, \quad \sigma \sim \pi_\sigma\lambda
,\end{split}\end{equation*}
where $\phi_\sigma$ is the Gaussian density function with variance $\sigma^2$ and mean zero, $DP(G)$ is the Dirichlet process with finite positive base measure $G$ and $\pi_\sigma$ is a distribution on $\mathbb{R}$.

We define the same functional classes as in \cite{KrRoVa10}:
\begin{multline}\label{hyp:B1}
\mathcal{P}(\beta,L,\gamma):=\Big\{ f \in \mathcal{F}: ~ \log f \text{ is locally } \beta\text{-Hölder with derivatives } 
l_j=(\log f)^{(j)}\\ \text{and } \lvert l_{\lfloor \beta \rfloor} (y) -  l_{\lfloor \beta \rfloor} (x) \rvert \leq r! L(y) \lvert y-x \rvert^{\beta - \lfloor \beta \rfloor},  \text { as soon as } \lvert x-y \rvert \leq \gamma \Big\}
\end{multline}
where $\beta>0$,  $L$ is a polynomial function, $\gamma >0$ and $\lfloor \beta \rfloor$ is the largest integer smaller than $\beta$.
We also consider the following tail conditions:
\begin{enumerate}
\labitem{(T1)}{hyp:B2} there exist positive constants  $M_0$, $\tau_0$, $\gamma_0$ such that for all $1 \leq i \leq k$ and all $y \in \mathbb{R}$
\[
f^*_i(y) \leq M_0 \exp(-\tau_0 \lvert y \rvert^{\gamma_0}),
\]
\labitem{(T2)}{hyp:B3}  for all $1 \leq i,j \leq k$ there exist constants $T_{i,j}$, $M_{i,j}$, $\tau_{i,j}$, $\gamma_{i,j}<\gamma_0$ such that
\[
f^*_i(y) \leq f^*_j(y) M_{i,j} \exp(\tau_{i,j} \lvert y \rvert^{\gamma_{i,j}}), \quad \lvert y \rvert \geq T_{i,j},
\]
\labitem{(T3)}{hyp:B4} for all $1 \leq i \leq k$, $f^*_i$ is positive and there exist $c_i>0$, $y_i^m < y_i^M$ such that $f^*_i$ is non decreasing on $(-\infty,y_i^m)$, non increasing on $(y_i^M, +\infty)$ and $f^*_i(y)\geq c_i$ for $y \in (y_i^m,y_i^M)$.
\end{enumerate}
Assumptions \ref{hyp:B2} and \ref{hyp:B4} are the same tail assumptions as those used in \cite{KrRoVa10}. The new Assumption \ref{hyp:B3} links the tail of each emission distributions.

We now describe the assumptions concerning the prior on the emission distributions:
\begin{enumerate}
\labitem{(G1)}{hyp:B7} $G([-y,y]^c) \lesssim  \exp(-C_1 y^{a_1})$ for all sufficiently large $y>0$, for some positive constant $a_1$,
\labitem{(S1)}{hyp:B8} $\Pi_\sigma\left( \sigma \leq x \right) \lesssim \exp(-C_2 x^{-a_2})$ for all sufficiently small $x>0$, for some positive constant $a_2$,
\labitem{(S2)}{hyp:B9} $\Pi_\sigma\left( \sigma > x \right) \lesssim  x^{-a_3}$ for all sufficiently large $x>0$, for some positive constant $a_3$,
\labitem{(S3)}{hyp:B10} there exists $a_6\leq 1$ such that
 \[\Pi_\sigma\left( x \leq \sigma \leq x(1+s) \right) \gtrsim x^{-a_4} s^{a_5} \exp(-C_3 x^{-a_6})\]
 for all $s \in (0,1)$, sufficiently small $x>0$, for some positive constants $a_4$, $a_5$ and $a_6$.
\end{enumerate}
The gamma and Gaussian distributions satisfy Assumption \ref{hyp:B7}.
The  inverse gamma distribution verifies \ref{hyp:B8}, \ref{hyp:B9} and \ref{hyp:B10}.

\begin{theorem}\label{th:vit_cont_vrai}Assume that there exist $\beta,L$ and $\gamma$ such that for all $1\leq j \leq k $, $f^*_j \in \mathcal{P}(\beta,L,\gamma)$ and Assumptions \ref{hyp:Q0}, \ref{hyp:B2}--\ref{hyp:B4}, \ref{hyp:B7} and \ref{hyp:B8}--\ref{hyp:B10} hold.

 Then Assumptions \eqref{hyp:KL}, \eqref{hyp:Fn} and \eqref{hyp:voisQ*} hold with 
\begin{equation}\label{eq:e_n_cont}
\tilde{\epsilon}_n=n^{-\frac{ \beta}{2 \beta +1}} \log(n)^{t_0} \quad \text{and} 
\quad \epsilon_n=n^{-\frac{ \beta}{2 \beta +1}} \log(n)^{t},
\end{equation}
where $t>t_0 \geq (2+2/\gamma_0 +1/\beta)/(1/\beta +2) $.
\end{theorem}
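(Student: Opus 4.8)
Since both rates satisfy $n\tilde\epsilon_n^2\to\infty$ and, because $\beta/(2\beta+1)<1/2$, we are in case (i) of \eqref{eq:def_un} so that $u_n=1$, the plan is to verify the three structural assumptions \eqref{hyp:KL}, \eqref{hyp:Fn} and \eqref{hyp:voisQ*} of Theorem \ref{th1} separately, importing the approximation and prior-mass machinery for Dirichlet process mixtures of Gaussians from \cite{KrRoVa10} and adapting it to the $k$ emission densities and to the new cross-index terms. Assumption \eqref{hyp:voisQ*} is immediate: by \ref{hyp:Q0} the density $\pi_Q$ is positive and continuous at $Q^*$, so the ball $\{Q:\lVert Q-Q^*\rVert\leq\tilde\epsilon_n\}$ has mass of order $\tilde\epsilon_n^{k(k-1)}$; as $\tilde\epsilon_n\gg n^{-1/2}$ this exceeds $\exp(-C_Q n\tilde\epsilon_n^2)$ for every $C_Q>0$, and I would fix $C_Q$ small and make the sieve constant $C$ of \eqref{hyp:Fn} large enough that $C_Q+C_f+C_K<C$.

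The analytic core is the Kullback--Leibler assumption \eqref{hyp:KL}. First I would take $S=[-a_n,a_n]$ with $a_n\asymp(\log n)^{1/\gamma_0}$, so that the tail bound \ref{hyp:B2} yields \eqref{hypa:maxf*Sc}. Next, exploiting the local $\beta$-H\"older smoothness of $\log f^*_j$ encoded in $\mathcal{P}(\beta,L,\gamma)$, I would build for each $j$ a finite location mixture of Gaussians $\tilde f_j$ with bandwidth $\sigma_n\asymp n^{-1/(2\beta+1)}$ (up to logarithms) and of order $\sigma_n^{-1}(\log n)^{\kappa}$ components in $S$, following the higher-order kernel construction of \cite{KrRoVa10}; this gives $\lVert f^*_j-\tilde f_j\rVert_\infty\lesssim\sigma_n^\beta\asymp\tilde\epsilon_n$ on $S$, which together with the lower bound of \ref{hyp:B4} provides \eqref{hypa:qui2S} and, since $\tilde f_j$ is a Gaussian mixture concentrated on $S$, also \eqref{hypa:maxSc}. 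I would then define $B_n$ by pinning a common bandwidth $\sigma$ near $\sigma_n$ and placing each mixing measure in a Dirichlet-process neighbourhood of the discrete measure generating $\tilde f_j$; on such neighbourhoods $f_j/\tilde f_j$ is uniformly close to $1$ on $S$, so \eqref{hypa:qui2} and \eqref{hypa:maxloSc} follow by the triangle inequality. Conditionally on $\sigma$ the $k$ coordinates are independent, whence $\Pi_f^{(k)}(B_n)$ factorizes into $k$ single-coordinate DP estimates of \cite{KrRoVa10} times a bandwidth factor from \ref{hyp:B10}, giving $\Pi_f^{(k)}(B_n)\gtrsim\exp(-C_f n\tilde\epsilon_n^2)$; the admissible exponent $t_0$ is dictated by the requirement that the $\sigma_n^{-1}(\log n)^{\kappa}\log(1/\sigma_n)$ and $a_n$ terms stay $\lesssim n\tilde\epsilon_n^2$, which is the origin of the stated lower bound on $t_0$.

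The step with no analogue in the i.i.d. problem, and which I expect to be the main obstacle, is the cross-index condition \eqref{hypa:log2}, in which $f^*_i$ multiplies $\log^2(f^*_j/f_j)$ with possibly $i\neq j$. Splitting the integral at $S$, the part over $S$ is handled as above because $f^*_j/f_j$ is near $1$ and $\int_S f^*_i\leq1$. Over $S^c$ I would write $\log^2(f^*_j/f_j)\lesssim\log^2 f^*_j+\log^2 f_j$: the second piece is controlled by the Gaussian-mixture lower bound for $f_j$ combined with the fast decay \ref{hyp:B2} of $f^*_i$ and the choice of $a_n$, while the first piece is exactly where the new tail-linking Assumption \ref{hyp:B3} enters. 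Indeed \ref{hyp:B3} bounds $f^*_i$ by $f^*_j\exp(\tau_{i,j}\lvert y\rvert^{\gamma_{i,j}})$ with $\gamma_{i,j}<\gamma_0$, so $\int_{S^c}f^*_i\log^2 f^*_j\lesssim\int_{S^c}f^*_j(\log f^*_j)^2\exp(\tau_{i,j}\lvert y\rvert^{\gamma_{i,j}})\lambda(dy)$, and the decay $\exp(-\tau_0\lvert y\rvert^{\gamma_0})$ from \ref{hyp:B2} dominates the growth $\exp(\tau_{i,j}\lvert y\rvert^{\gamma_{i,j}})$ precisely because $\gamma_{i,j}<\gamma_0$, making the $S^c$-contribution $\lesssim\tilde\epsilon_n^2$. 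This is exactly the difficulty flagged after Theorem \ref{th1}: without the pair $(\tilde f_j,S)$ and the relation \ref{hyp:B3}, the HMM likelihood would only deliver the cruder $n\tilde\epsilon_n$ bound on the Kullback--Leibler divergence.

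Finally, for \eqref{hyp:Fn} I would take $\mathcal{F}_n$ to be the vectors whose mixing measures are supported in $[-a_n,a_n]$ with at most $N_n\asymp\sigma_n^{-1}(\log n)^{\kappa}$ atoms and bandwidth $\sigma\in[\underline\sigma_n,\overline\sigma_n]$. Assumptions \ref{hyp:B7}, \ref{hyp:B8} and \ref{hyp:B9} bound the base-measure tail and the two bandwidth tails, so that $\Pi_f^{(k)}(\mathcal{F}_n^c)=o(\exp(-C n\tilde\epsilon_n^2))$ with $C$ as large as needed; and since $d$ is the maximum over coordinates of the $L_1$-distance, the covering number factorizes over the $k$ coordinates and is bounded by $\exp(C' n\epsilon_n^2)$ through the standard entropy estimate for finite location-scale Gaussian mixtures. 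Choosing the logarithmic exponents $t_0<t$ to meet the stated inequality balances the approximation error against these prior-mass and entropy terms and completes the verification.
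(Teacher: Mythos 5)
Your plan follows the same route as the paper: Assumption \eqref{hyp:voisQ*} from \ref{hyp:Q0}, Assumption \eqref{hyp:Fn} from the sieve and entropy bounds for Dirichlet process location mixtures (Theorem 5 of \citep{ShToGh13}), and Assumption \eqref{hyp:KL} from a \citep{KrRoVa10}-type finite Gaussian mixture approximation of each $f^*_j$ combined with Dirichlet process prior-mass estimates (the computations of Theorem 4 of \citep{ShToGh13}); you also correctly identify that the tail-link condition \ref{hyp:B3} is what tames the cross-index terms, which is indeed the genuinely new difficulty relative to the i.i.d.\ case.

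However, your construction of the Kullback--Leibler neighbourhood has a gap that would make the verification of \eqref{hypa:qui2} and \eqref{hypa:qui2S} fail. You take $S=[-a_n,a_n]$, a sup-norm approximation $\lVert f^*_j-\tilde f_j\rVert_\infty\lesssim\sigma_n^\beta$ on $S$, and the lower bound of \ref{hyp:B4} for the denominators. But \ref{hyp:B4} bounds $f^*_j$ below by $c_j$ only on the \emph{fixed} interval $(y_j^m,y_j^M)$; on the rest of $[-a_n,a_n]$ the class $\mathcal{P}(\beta,L,\gamma)$ only guarantees $f^*_j(y)\geq\exp(-\mathrm{poly}(\lvert y\rvert))$, so at $\lvert y\rvert\asymp a_n\asymp(\log n)^{1/\gamma_0}$ the density may be smaller than any power of $1/n$. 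Consequently $\int_S\lvert f^*_j-\tilde f_j\rvert^2/\tilde f_j$ (and the analogous integral with $f^*_j$ in the denominator) cannot be deduced to be $O(\tilde\epsilon_n^2)$ from a sup-norm error of order $\tilde\epsilon_n$; the growing factor $\lvert S\rvert=2a_n$ is a further, milder, loss. This is exactly why the paper's Lemma \ref{th:vit_cont} does not use a common truncation interval: it takes $S=\bigcap_j\left(A^j_\sigma\cap E^j_\sigma\right)$ with the level sets $E^j_\sigma=\{f^*_j\geq\sigma^{H_1}\}$ and the derivative-control sets $A^j_\sigma$, and replaces the sup-norm bound by the \emph{multiplicative} expansion \eqref{eq:approxf*}, $\tilde f_j=f^*_j\left(1+O(R^j\sigma^\beta)\right)+O\left((1+R^j)\sigma^H\right)$, combined with the moment bounds \eqref{eq:assC2}; the error is then quadratic relative to $f^*_j$ pointwise, which is what the chi-square-type conditions require. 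The regions where some $f^*_i$ drops below $\sigma^{H_1}$ are pushed into $S^c$, and it is precisely there --- in the proof of \eqref{hypa:maxf*Sc}, Equation \eqref{eq:Ef*} --- that \ref{hyp:B3} becomes indispensable, because $\int_{(E^i_\sigma)^c}f^*_j$ mixes the level set of $f^*_i$ with the mass of $f^*_j$. In short: your interval $S$ makes \eqref{hypa:maxf*Sc} easy but destroys \eqref{hypa:qui2} and \eqref{hypa:qui2S}, whereas the paper's level-set $S$ makes the latter provable at the cost of needing \ref{hyp:B3} in the former; your argument becomes the paper's once $S$ and the error control are modified accordingly.
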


The proof of Theorem \ref{th:vit_cont_vrai} is given in Appendix \ref{se:pr_th_continuous}. Using Theorem \ref{th1} and \ref{th:vit_cont_vrai}, we directly deduce posterior rate of convergence under the Assumptions of Theorem \ref{th:vit_cont_vrai} and the different types of priors $\Pi_Q$. 

\begin{corollary}\label{co:vit_cont}
Assume that there exist $\beta,L$ and $\gamma$ such that for all $1\leq j \leq k $, $f^*_j \in \mathcal{P}(\beta,L,\gamma)$ and that Assumptions \ref{hyp:Q0}, \ref{hyp:B2}--\ref{hyp:B4}, \ref{hyp:B7} and \ref{hyp:B8}--\ref{hyp:B10} hold. Moreover suppose that $\Pi_Q$ satisfies
\begin{itemize}
\item   \ref{hyp:pi_Q_exp}, then the posterior concentrates with rate $n^{\frac{ -\beta+1}{2 \beta +1}} (\log n)^{3t} $;
\item \ref{hyp:pi_Q_exp_exp}, then the posterior concentrates with rate $n^{-\frac{ \beta}{2 \beta +1}} (\log n)^{t+1/(\max_{1 \leq i \leq k}\alpha_i)} $;
\item  \ref{hyp:pi_Q_tronque}, then the posterior concentrates with rate $n^{-\frac{ \beta}{2 \beta +1}} (\log n)^t$;
\end{itemize}
with $t>(2+2/\gamma_0 +1/\beta)/(1/\beta +2)$.
\end{corollary}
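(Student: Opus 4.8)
The plan is to combine Theorem \ref{th1} with Theorem \ref{th:vit_cont_vrai} by verifying the remaining assumption \eqref{hyp:qn} for each of the three priors $\Pi_Q$, and then computing the resulting rate $\epsilon_n/\underline{q}_n$. By Theorem \ref{th:vit_cont_vrai}, Assumptions \eqref{hyp:KL}, \eqref{hyp:Fn} and \eqref{hyp:voisQ*} already hold with $\tilde\epsilon_n=n^{-\beta/(2\beta+1)}\log(n)^{t_0}$ and $\epsilon_n=n^{-\beta/(2\beta+1)}\log(n)^t$. Note that here $\tilde\epsilon_n\asymp n^{-\beta/(2\beta+1)}$ is of the form $n^{-s}$ with $s=\beta/(2\beta+1)<1/2$, so by \eqref{eq:def_un} we have $u_n=1$. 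Thus the only work left is, for each tail behaviour \ref{hyp:pi_Q_exp}--\ref{hyp:pi_Q_tronque}, to find the smallest admissible sequence $\underline{q}_n\in(0,1/k]$ such that
\[
\Pi_Q\big((\Delta_k^k(\underline{q}_n))^c\big)=o\big(\exp(-C n\tilde\epsilon_n^2)\big),
\]
and then report the rate $M\epsilon_n/\underline{q}_n$ delivered by Theorem \ref{th1}.

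First I would estimate the prior mass $\Pi_Q((\Delta_k^k(\underline q))^c)$ near the boundary of the simplex as $\underline q\to0$. Under \ref{hyp:Q0} the density factorizes over rows, so it suffices to bound, for a single row $u\in\Delta_k$, the probability that some coordinate $u_i$ falls below $\underline q$, which in turn is governed by the tail of $\pi_q$ as a coordinate tends to $0$. For \ref{hyp:pi_Q_exp} the integrand decays like $\exp(-\alpha_i/u_i)$, giving $\Pi_Q((\Delta_k^k(\underline q))^c)\lesssim \exp(-c/\underline q)$ for some $c>0$; to make this $o(\exp(-Cn\tilde\epsilon_n^2))$ with $n\tilde\epsilon_n^2\asymp n^{1/(2\beta+1)}\log(n)^{2t_0}$, I solve $1/\underline q_n\gtrsim n\tilde\epsilon_n^2$, forcing $\underline q_n$ to be polynomial in $1/n$, namely $\underline q_n\asymp n^{-1/(2\beta+1)}\log(n)^{-2t_0}$. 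For \ref{hyp:pi_Q_exp_exp} the doubly exponential decay $\exp(-\beta_i\exp(u_i^{-\alpha_i}))$ yields $\Pi_Q((\Delta_k^k(\underline q))^c)\lesssim \exp(-c\exp(\underline q^{-\alpha}))$, so it suffices that $\exp(\underline q_n^{-\alpha})\gtrsim n\tilde\epsilon_n^2$, i.e. $\underline q_n^{-\alpha}\gtrsim \log(n\tilde\epsilon_n^2)\asymp\log n$; this gives $\underline q_n\asymp(\log n)^{-1/\alpha}$ with $\alpha=\max_{1\le i\le k}\alpha_i$. For \ref{hyp:pi_Q_tronque} the prior is supported on $\Delta_k^k(\underline q)$, so choosing $\underline q_n=\underline q$ makes the excess mass exactly zero, and no deterioration occurs.

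Finally I would assemble the concentration rate $\epsilon_n/\underline q_n$ from Theorem \ref{th1} in each case. For \ref{hyp:pi_Q_exp}: $\epsilon_n/\underline q_n\asymp n^{-\beta/(2\beta+1)}\log(n)^t\cdot n^{1/(2\beta+1)}\log(n)^{2t_0}=n^{(-\beta+1)/(2\beta+1)}\log(n)^{t+2t_0}$, which I would absorb into $(\log n)^{3t}$ using $t>t_0$. For \ref{hyp:pi_Q_exp_exp}: $\epsilon_n/\underline q_n\asymp n^{-\beta/(2\beta+1)}\log(n)^{t}\cdot(\log n)^{1/\alpha}=n^{-\beta/(2\beta+1)}\log(n)^{t+1/(\max_i\alpha_i)}$. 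For \ref{hyp:pi_Q_tronque}: $\epsilon_n/\underline q_n\asymp n^{-\beta/(2\beta+1)}\log(n)^t$, the undeteriorated minimax rate. In each case the conclusion follows from applying Theorem \ref{th1} with $M$ large enough, since all four of its hypotheses are now in force.

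The main obstacle I anticipate is the boundary tail computation in the second paragraph: one must integrate the factorized density $\pi_q$ over the region of $\Delta_k$ where at least one coordinate is below $\underline q$, handling the simplex constraint $\sum_i u_i=1$ carefully so that sending one coordinate to $0$ does not spuriously interfere with the decay of the others, and verifying the constants so that $C_Q+C_f+C_K<C$ from \eqref{hyp:voisQ*} remains compatible with the excess-mass exponent $C$ required by \eqref{hyp:qn} and by the tail condition $C>2C_K+C_Q+C_f$ used at the end of the proof of Theorem \ref{th1}. The delicate point is that \ref{hyp:pi_Q_exp} only buys a polynomial $\underline q_n$ and hence a genuinely worse rate $n^{(-\beta+1)/(2\beta+1)}$, whereas the sharper tails \ref{hyp:pi_Q_exp_exp} and \ref{hyp:pi_Q_tronque} preserve the minimax rate up to logarithmic factors; making this dichotomy precise is exactly what the tail estimates above accomplish.
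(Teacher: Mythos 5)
Your overall strategy is exactly the paper's: the paper proves this corollary in one line, by invoking Theorem \ref{th1} with the rates $\tilde{\epsilon}_n,\epsilon_n$ supplied by Theorem \ref{th:vit_cont_vrai} and reading off $\underline{q}_n$ from Assumption \eqref{hyp:qn} for each of the three tail behaviours of $\Pi_Q$. Your treatment of \ref{hyp:pi_Q_exp} (giving $\underline{q}_n\asymp 1/(n\tilde{\epsilon}_n^2)$, hence the rate $n^{(-\beta+1)/(2\beta+1)}(\log n)^{t+2t_0}\leq n^{(-\beta+1)/(2\beta+1)}(\log n)^{3t}$ since $t>t_0$) and of \ref{hyp:pi_Q_tronque} (where $\underline{q}_n=\underline{q}$ is constant and no deterioration occurs) is correct, as is the observation that $u_n=1$ here because $\beta/(2\beta+1)<1/2$.

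There is, however, a flaw in your computation for \ref{hyp:pi_Q_exp_exp}. The prior mass of $(\Delta_k^k(\underline{q}_n))^c$ is a union over entries, and after the union bound the dominating term is the coordinate with the \emph{slowest} decay, i.e. the smallest $\alpha_i$: since $\underline{q}_n<1$, the map $\alpha\mapsto \underline{q}_n^{-\alpha}$ is increasing, so the bound one actually gets is
\[
\Pi_Q\big((\Delta_k^k(\underline{q}_n))^c\big)\lesssim \exp\big(-c\exp(\underline{q}_n^{-\min_i\alpha_i})\big),
\]
and Assumption \eqref{hyp:qn} then forces $\underline{q}_n^{-\min_i\alpha_i}\gtrsim\log n$, i.e. $\underline{q}_n\lesssim(\log n)^{-1/\min_i\alpha_i}$. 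With your choice $\underline{q}_n\asymp(\log n)^{-1/\max_i\alpha_i}$, any coordinate with $\alpha_i<\max_j\alpha_j$ gives $\exp(\underline{q}_n^{-\alpha_i})=\exp\big((\log n)^{\alpha_i/\max_j\alpha_j}\big)$, which is subpolynomial in $n$ and hence cannot dominate $n\tilde{\epsilon}_n^2\asymp n^{1/(2\beta+1)}(\log n)^{2t_0}$; so \eqref{hyp:qn} actually fails for that coordinate unless all the $\alpha_i$ are equal. The correct deterioration factor is $(\log n)^{\max_i(1/\alpha_i)}=(\log n)^{1/\min_i\alpha_i}$, not $(\log n)^{1/\max_i\alpha_i}$; your conclusion coincides with the corollary's displayed exponent only because the statement itself appears to contain the same $\max$/$\min$ slip (writing $1/(\max_i\alpha_i)$ where $\max_i(1/\alpha_i)$ is meant). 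The substance survives either way --- the rate remains $n^{-\beta/(2\beta+1)}$ up to a power of $\log n$, hence minimax adaptive --- but as written your step ``this gives $\underline{q}_n\asymp(\log n)^{-1/\alpha}$ with $\alpha=\max_i\alpha_i$'' does not follow from your own tail bound, in which the exponent $\alpha$ was never identified.
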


The minimax rate, with respect to $D_\ell$ in the HMM framework for emission density functions belonging to functional classes of $\beta$-Hölder type, is larger than $n^{- \beta/(2 \beta +1)}$. Indeed with a hidden Markov chain $(X_t,Y_t)$ distributed from a parameter $\theta=(Q,f)$ such that $Q_{i,j}=1/k$ and $f_i=f_1$ for all $1\leq i,j \leq k$,  the observations  $(Y_t)$ are i.i.d. from $f_1 \lambda$. 
Thus, priors satisfying \ref{hyp:Q0}, \ref{hyp:B2}--\ref{hyp:B4}, \ref{hyp:B7}, \ref{hyp:B8}--\ref{hyp:B10} and \ref{hyp:pi_Q_exp_exp} lead to minimax rates (up to $\log n$). As these priors do not depend on the regularity of the functional class considered, they ensure adaptive Bayesian density estimation in the framework of HMMs.


\section*{Acknowledgements}
I want to thank Elisabeth Gassiat and Judith Rousseau for their valuable comments. This work was partly supported by the grants  ANR Banhdits and Calibration.

\begin{appendices}

\section{Proof of Lemma \ref{lemma2v3} : control of the Kullback Leibler divergence between $\theta^*$ and $\theta$}\label{se:prKLv3}

First denote $\epsilon=\epsilon_n$.
Using Assumptions \eqref{hyp:q*} and \eqref{hyp:voisQ*2}, there exists $\underline{q}>0$ such that for all $1\leq i,j \leq k$,
$Q^*_{i,j}\geq \underline{q}$ and $Q_{i,j} \geq \underline{q}$, more precisely, $\underline{q}$ can be chosen equal to $\underline{q}^*/2$ as soon as $n$ is large enough.
Let $q^{\theta,Y_{1:t-1}}_t$ be the conditional density function of $Y_t$ given $Y_{1:t-1}$ with respect to $\lambda$:
\[
q^{\theta,Y_{1:t-1}}_t=\sum_{i=1}^k f_i(\cdot)Q^{\theta,Y_{1:t-1}}_{t,i},
\]
where $Q^{\mu,\theta,Y_{1:t-1}}_{t,i} = \mathbb{P}^{\theta}(X_t=i | Y_{1:t-1}, X_1\sim \mu)$
, where $t\geq 1$, $1 \leq i \leq k$,   $\mu \in \Delta_k$. When $\mu$ is not specified ($Q^{\theta,Y_{1:t-1}}_{t,i}$), the stationary initial probability distribution is considered.
Note that, when  $\mu \in \Delta_k(\underline{q})$, $\tilde{\theta} \in \Delta_k^k(\underline{q}) \times \mathcal{F}^k$,
\begin{equation}\label{eq:min_Q_sachant_obs}
Q^{\mu,\tilde{\theta},Y_{1:t-1}}_{t,i} = \frac{\sum_{j=1}^k Q^{\mu,\tilde{\theta},Y_{1:t-2}}_{t-1,j} Q_{j,i} f_j(Y_{t-1}) }{\sum_{\iota=1}^k Q^{\mu,\tilde{\theta},Y_{1:t-2}}_{t-1,\iota}  f_\iota(Y_{t-1}) } \geq \underline{q},
\end{equation}
 for all $1\leq i  \leq k$, $t \geq 1$.

\begin{equation}\label{eq:contKL}
\begin{split}
&KL(p_n^{\theta^*},p_n^\theta) \\
& =   \mathbb{E}^{\theta^*}\left(\sum_{t=1}^n  \int  q_t^{\theta^*,Y_{1:t-1}}(y) 
\log\left(\frac{q_t^{\theta^*,Y_{1:t-1}}(y)}{q_t^{\theta,Y_{1:t-1}}(y)}  \right) \lambda(dy)\right) \\
& = 
 \mathbb{E}^{\theta^*}\left(\sum_{t=1}^n  \int_S  q_t^{\theta^*,Y_{1:t-1}}(y) 
\log\left(\frac{q_t^{\theta^*,Y_{1:t-1}}(y)}{q_t^{\theta,Y_{1:t-1}}(y)}  \right) \lambda(dy)\right)  \\
&
\quad + \mathbb{E}^{\theta^*}\left(\sum_{t=1}^n  \int_{S^c}  q_t^{\theta^*,Y_{1:t-1}}(y) 
\log\left(\frac{q_t^{\theta^*,Y_{1:t-1}}(y)}{q_t^{\theta,Y_{1:t-1}}(y)}  \right) \lambda(dy)\right).
\end{split}
\end{equation}
Using Equation \eqref{eq:min_Q_sachant_obs}, for all $1 \leq l \leq k$,
\begin{equation}\label{eq:majoration_frac_q_theta}
\begin{split}
\frac{q_t^{\theta^*,Y_{1:t-1}}(y)}{q_t^{\theta,Y_{1:t-1}}(y)}
&=\frac{\sum_{i=1}^k f^*_i(y)Q^{\theta^*,Y_{1:t-1}}_{t,i}}{\sum_{i=1}^k f_i(y)Q^{\theta,Y_{1:t-1}}_{t,i}}
\leq \frac{k \max_{1 \leq j \leq k} f^*_j(y) }{ \underline{q}  f_l(y)} \\
&\leq \max_{1 \leq j \leq k} \frac{k  f^*_j(y) }{\underline{q}   f_j(y)}
\end{split}
\end{equation}
then Assumptions \eqref{hypa:maxloS} and \eqref{hypa:maxf*Sc} lead to
\begin{equation}\label{eq:contSc}
\mathbb{E}^{\theta^*}\left(\int_{S^c}  q_t^{\theta^*,Y_{1:t-1}}(y) 
\log\left(\frac{q_t^{\theta^*,Y_{1:t-1}}(y)}{q_t^{\theta,Y_{1:t-1}}(y)}  \right) \lambda(dy)\right)
\leq \left(\log\frac{k}{\underline{q}} + 1 \right) \epsilon^2.
\end{equation}
We now control the  expectancy of the third line of Equation \eqref{eq:contKL}
\begin{equation}\label{eq:contKLS}
\begin{split}
& \mathbb{E}^{\theta^*}\left(\sum_{t=1}^n  \int_S  q_t^{\theta^*,Y_{1:t-1}}(y) 
\log\left(\frac{q_t^{\theta^*,Y_{1:t-1}}(y)}{q_t^{\theta,Y_{1:t-1}}(y)}  \right) \lambda(dy)\right)\\
& \leq 
\mathbb{E}^{\theta^*}\left(\sum_{t=1}^n  \int_S  q_t^{\theta^*,Y_{1:t-1}}(y) 
\log\left(\frac{q_t^{\theta^*,Y_{1:t-1}}(y)}{\sum_{i=1}^k \tilde{f}_i(y)Q^{\theta,Y_{1:t-1}}_{t,i}}  \right) \lambda(dy)\right)\\
& \quad +
\mathbb{E}^{\theta^*}\left(\sum_{t=1}^n  \int_S  q_t^{\theta^*,Y_{1:t-1}}(y) 
\log\left(\frac{\sum_{i=1}^k \tilde{f}_i(y)Q^{\theta,Y_{1:t-1}}_{t,i}}{q_t^{\theta,Y_{1:t-1}}(y)}  \right) 
\lambda(dy)\right).
\end{split}
\end{equation}
We control the expectancy of the third line of Equation \eqref{eq:contKLS}, using 
\begin{equation*}
\begin{split}
&\frac{\sum_{i=1}^k \tilde{f}_i(y)Q^{\theta,Y_{1:t-1}}_{t,i}}{q_t^{\theta,Y_{1:t-1}}(y)} 
=\frac{\sum_{i=1}^k \tilde{f}_i(y)Q^{\theta,Y_{1:t-1}}_{t,i}}{\sum_{i=1}^k f_i(y)Q^{\theta,Y_{1:t-1}}_{t,i}} 
\leq \max_{1 \leq i \leq k}  \frac{\tilde{f}_i(y)}{f_i(y)};
\end{split}
\end{equation*}
by  Lemma \ref{le:astuce_frac_somme},
 and Assumption \eqref{hypa:maxloSc}, to obtain
 \begin{equation}\label{eq:kl_S}
 \mathbb{E}^{\theta^*}\left( \int_S  q_t^{\theta^*,Y_{1:t-1}}(y) 
\log\left(\frac{\sum_{i=1}^k \tilde{f}_i(y)Q^{\theta,Y_{1:t-1}}_{t,i}}{q_t^{\theta,Y_{1:t-1}}(y)}  \right) \lambda(dy)\right)
\leq   \epsilon^2.
 \end{equation}
We bound the expectancy of the second line of Equation  \eqref{eq:contKLS}, using the inequality recalled at the top of page
1234 of \citep{KrRoVa10},
\begin{equation}\label{eq:contSSc}
\begin{split}
&\mathbb{E}^{\theta^*}\left(  \int_S  q_t^{\theta^*,Y_{1:t-1}}(y) 
\log\left(\frac{q_t^{\theta^*,Y_{1:t-1}}(y)}{\sum_{i=1}^k \tilde{f}_i(y)Q^{\theta,Y_{1:t-1}}_{t,i}}  \right) \lambda(dy)\right)\\
&\leq \mathbb{E}^{\theta^*}\left( \int_S  \frac{\left( q_t^{\theta^*,Y_{1:t-1}}(y) - \sum_{i=1}^k \tilde{f}_i(y)Q^{\theta,Y_{1:t-1}}_{t,i} \right)^2}{\sum_{i=1}^k \tilde{f}_i(y)Q^{\theta,Y_{1:t-1}}_{t,i}} 
 \lambda(dy)\right) \\
 &
\quad +  \mathbb{E}^{\theta^*}\left(  \int_{S^c}  \sum_{i=1}^k \tilde{f}_i(y)Q^{\theta,Y_{1:t-1}}_{t,i} - q_t^{\theta^* ,Y_{1:t-1}}(y)  \lambda(dy) \right) .
\end{split}
\end{equation}
The expectancy of the second line of Equation  \eqref{eq:contSSc} is controlled as follows
\begin{equation}\label{eq:contSQf}
\begin{split}
&\mathbb{E}^{\theta^*}\left( \int_S  \frac{\left( q_t^{\theta^*,Y_{1:t-1}}(y) - \sum_{i=1}^k \tilde{f}_i(y)Q^{\theta,Y_{1:t-1}}_{t,i} \right)^2}{\sum_{i=1}^k \tilde{f}_i(y)Q^{\theta,Y_{1:t-1}}_{t,i}} 
 \lambda(dy)\right) \\
&\leq 2\mathbb{E}^{\theta^*}\left(  \frac{k \max_{1 \leq i \leq k }\left( Q_{t,i}^{\theta^*,Y_{1:t-1}} - Q_{t,i}^{\theta,Y_{1:t-1}} \right)^2}{\underline{q}} \right)
 +  2\mathbb{E}^{\theta^*}\left( \int_S  \max_{1 \leq i \leq k} \frac{\left( f^*_i(y) - \tilde{f}_i(y) \right)^2}{\underline{q} \tilde{f}_i(y)} 
 \lambda(dy)\right) \\
&\leq \left( \frac{16k(1+2k)}{\underline{q}^4} +1\right)\frac{2}{\underline{q}} \epsilon^2 +\frac{16 k\rho^{2(t-1)}}{\underline{q}}
\end{split}
\end{equation}
using Lemma \ref{lesumQ} and then Assumption \eqref{hypa:qui2} and Lemma \ref{lesumQ}.
The expectancy of the third line of Equation \eqref{eq:contSSc} is controlled thanks to Assumption \eqref{hypa:maxSc}:
\begin{equation}\label{eq:contfSc}
\begin{split}
&\mathbb{E}^{\theta^*}\left(  \int_{S^c} \sum_{i=1}^k \tilde{f}_i(y)Q^{\theta,Y_{1:t-1}}_{t,i}  - q_t^{\theta^* ,Y_{1:t-1}}(y)  \lambda(dy) \right)  \\
&\leq \mathbb{E}^{\theta^*}\left(  \int_{S^c}  \sum_{i=1}^k \tilde{f}_i(y)Q^{\theta,Y_{1:t-1}}_{t,i}   \lambda(dy) \right)  \\
&\leq \max_{1 \leq i \leq k } \int_{S^c}  \tilde{f}_i(y) \lambda(dy) \leq \epsilon^2
.
\end{split}
\end{equation}
We conclude the proof by combining Equations \eqref{eq:contKL}, \eqref{eq:contSc}, \eqref{eq:contKLS}, \eqref{eq:kl_S}, \eqref{eq:contSSc}, \eqref{eq:contSQf} and \eqref{eq:contfSc}.\hfill$\qed$

\begin{lemma}\label{le:astuce_frac_somme}
For all $a_i, b_i, c_i, d_i \geq 0$, $1 \leq i \leq k$,
\begin{equation*}
\frac{\sum_{1\leq i \leq k} a_i b_i}{\sum_{1\leq j \leq k} c_j d_j }
= \frac{\sum_{1\leq i \leq k} a_i/c_i * b_i/d_i *  c_i d_i}{\sum_{1\leq j \leq k} c_j d_j } 
\leq \max_{1\leq i \leq k} \frac{a_i}{c_i} \max_{1\leq j \leq k} \frac{b_j}{d_j}.
\end{equation*}
\end{lemma}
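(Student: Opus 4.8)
The plan is to read the left-hand side as a weighted average of the ratios $a_ib_i/(c_id_i)$ with nonnegative weights $c_id_i$, and then to bound every such ratio by the product of the two maxima. First I would dispose of the degenerate indices: whenever $c_id_i=0$ the corresponding summand contributes nothing to the denominator, so I may restrict attention to the index set $I=\{i:\,c_id_i>0\}$, on which both ratios $a_i/c_i$ and $b_i/d_i$ are well defined. (If $I=\varnothing$ the denominator vanishes and the expression is not defined, so nothing is asserted; the inequality is implicitly understood for $\sum_j c_jd_j>0$.)

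The central step is precisely the elementary factorisation already displayed in the statement: for $i\in I$,
\[
a_ib_i=\frac{a_i}{c_i}\,\frac{b_i}{d_i}\,c_id_i
\le \Big(\max_{1\le i\le k}\frac{a_i}{c_i}\Big)\Big(\max_{1\le j\le k}\frac{b_j}{d_j}\Big)\,c_id_i,
\]
where I have used $a_i/c_i\le \max_i a_i/c_i$ and $b_i/d_i\le \max_j b_j/d_j$ together with the nonnegativity of $c_id_i$. Summing over $i\in I$—equivalently over all $1\le i\le k$, since the omitted terms contribute $0$ on both sides—gives
\[
\sum_{1\le i\le k} a_ib_i
\le \Big(\max_{1\le i\le k}\frac{a_i}{c_i}\Big)\Big(\max_{1\le j\le k}\frac{b_j}{d_j}\Big)\sum_{1\le j\le k} c_jd_j,
\]
and dividing by the positive quantity $\sum_j c_jd_j$ yields exactly the claimed bound.

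There is essentially no obstacle here; the only point requiring mild care is the bookkeeping for vanishing $c_i$ or $d_i$, handled by the convention that a term with weight $c_id_i=0$ is dropped. In the way the lemma is invoked in the proof of Lemma \ref{lemma2v3}—with $a_i=\tilde f_i(y)$, $b_i=d_i=Q^{\theta,Y_{1:t-1}}_{t,i}$ and $c_i=f_i(y)$, so that the denominator equals $q_t^{\theta,Y_{1:t-1}}(y)=\sum_i f_i(y)Q^{\theta,Y_{1:t-1}}_{t,i}$—one has $\max_j b_j/d_j=1$ and the weights $f_i(y)Q^{\theta,Y_{1:t-1}}_{t,i}$ are nonnegative, with $Q^{\theta,Y_{1:t-1}}_{t,i}\ge\underline{q}>0$, so no genuine degeneracy arises and the bound reduces to $\max_i \tilde f_i(y)/f_i(y)$.
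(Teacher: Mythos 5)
Your proof is correct and follows exactly the argument the paper intends: the factorisation $a_ib_i=(a_i/c_i)(b_i/d_i)\,c_id_i$ displayed in the middle term of the lemma's own statement, read as a weighted average with weights $c_id_i$ and bounded by the product of the two maxima. The paper leaves this as an unproved one-liner, so your only addition is the (sensible) bookkeeping for indices with $c_id_i=0$, which matches how the lemma is actually applied.
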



\section{Proof of Lemma \ref{lemma1} with technical lemmas: control of  $Var^{\theta^*}(L_n^{\theta^*}-L_n^\theta)$}\label{se:preuve_KL2}

\subsection{Proof of Lemma \ref{lemma1}} 
 
First denote $\epsilon=\tilde{\epsilon}_n/\sqrt{u_n}$.
 Using Assumptions \eqref{hyp:q*} and \eqref{hyp:voisinage_Q*}, there exists $\underline{q}>0$ such that for all $1\leq i,j \leq k$,
$Q^*_{i,j}\geq \underline{q}$ and $Q_{i,j} \geq \underline{q}$, more precisely, $\underline{q}$ can be chosen equal to $\underline{q}^*/2$ as soon as $n$ is large enough.
Let  $Var$ be the variance of $L_n^{\theta^*}-L_n^\theta$ :
\[
Var:=\mathbb{E}^{\theta^*}\left[
\left(\log \frac{p_n^{\theta^*}(Y_{1:n})}{p_n^{\theta}(Y_{1:n})}
       -\mathbb{E}^{\theta^*}\left(\log \frac{p_n^{\theta^*}(Y_{1:n})}{p_n^{\theta}(Y_{1:n})}\right)\right)^2\right]. 
       \]
Denoting $Z_t=\log \left(\frac{P^{\theta^*}(Y_t | Y_{1:t-1})}{P^{\theta}(Y_t | Y_{1:t-1})} \right)$, then
\[
Var=\mathbb{E}^{\theta^*}\left( \left(  \sum_{t=1}^n Z_t - \mathbb{E}^{\theta^*}\left(\sum_{t=1}^n Z_t \right)  \right)^2  \right).
\]
We want to bound  $Var$ by $Cn\epsilon^{(2-\alpha)/2}$, for any $\alpha>0$.
In this purpose, we split the sum in two parts: 
\begin{equation}\label{var}
Var \leq 2 \underbrace{\mathbb{E}^{\theta^*}\left[
\left( \sum_{t=1}^n \left(Z_t -\mathbb{E}^{\theta^*}\left(Z_t |Y_{1:t-1} \right)\right)\right)^2\right]}_{=S_1}
          +2 \underbrace{\mathbb{E}^{\theta^*}\left[
\left( \sum_{t=1}^n \left(\mathbb{E}^{\theta^*}\left(Z_t |Y_{1:t-1} \right)  -  \mathbb{E}^{\theta^*} (Z_t) \right)\right)^2\right]}_{=S_2},
       \end{equation}
$S_1$ is the expectancy of the square of a sum of martingale increments, for which the covariances are zero so that only $n$ terms remain. $S_2$ is further controlled using the exponential forgetting of Markov chain.
First, we control $S_1$:
\begin{equation}\label{s1}
\begin{split}
S_1 & = \sum_{t=1}^n \mathbb{E}^{\theta^*}\left(\left(Z_t -\mathbb{E}^{\theta^*}\left(Z_t |Y_{1:t-1} \right)\right)^2\right) \\
& \quad + 2\sum_{1\leq r<t\leq n}\mathbb{E}^{\theta^*}\bigg( \big(Z_r-\mathbb{E}^{\theta^*}(Z_r|Y_{1:r-1})\big)
~ \mathbb{E}^{\theta^*}\big(Z_t - \mathbb{E}^{\theta^*}(Z_t |Y_{1:t-1} ) |Y_{1:t-1}\big) \bigg) \\
& \leq \sum_{t=1}^n \mathbb{E}^{\theta^*}\left(Z_t^2\right)
\end{split}
\end{equation}
using the  convexity of the square function and Jensen's inequality so that
\begin{equation}\label{eq:convexsquare}
\mathbb{E}^{\theta^*}(\mathbb{E}^{\theta^*}(Z_t|Y_{1:t-1})^2) \leq \mathbb{E}^{\theta^*}(Z_t^2).
\end{equation}
As to $S_2$:
\begin{equation}\label{s2}
\begin{split}
S_2 & =  \sum_{t=1}^n \mathbb{E}^{\theta^*}\left(\left(\mathbb{E}^{\theta^*}\left(Z_t |Y_{1:t-1} \right)   -  \mathbb{E}^{\theta^*}\left(Z_t\right) \right)^2\right) \\
& \hspace{1cm} + 2 \sum_{1\leq r<t\leq n}\mathbb{E}^{\theta^*}\bigg( \big(\mathbb{E}^{\theta^*}(Z_r|Y_{1:r-1})-  \mathbb{E}^{\theta^*}(Z_r)\big)
~ \mathbb{E}^{\theta^*}\big(\mathbb{E}^{\theta^*}(Z_t |Y_{1:t-1} ) - \mathbb{E}^{\theta^*}(Z_t) |Y_{1:r-1}\big) \bigg) \\
& \leq  \sum_{t=1}^n \mathbb{E}^{\theta^*}(Z_t^2) + 
2\sum_{1\leq r<t\leq n} \sqrt{\mathbb{E}^{\theta^*}(Z_r^2)} ~  
\sqrt{\mathbb{E}^{\theta^*}\big(\lvert \mathbb{E}^{\theta^*}(Z_t |Y_{1:r-1} ) - \mathbb{E}^{\theta^*}(Z_t)\rvert^2\big)},
\end{split}
\end{equation}
using Equation \eqref{eq:convexsquare} and Cauchy-Schwarz inequality to bound the second term. 

Combining \eqref{var},  \eqref{s1} et \eqref{s2}, we obtain
\begin{equation}\label{va}
Var \leq 4 \sum_{t=1}^n \mathbb{E}^{\theta^*}\left(Z_t^2\right)
+ 4 \sum_{1\leq r<t\leq n} \sqrt{\mathbb{E}^{\theta^*}(Z_r^2)} ~  
\sqrt{\mathbb{E}^{\theta^*}\big(\lvert \mathbb{E}^{\theta^*}(Z_t |Y_{1:r-1} ) - \mathbb{E}^{\theta^*}(Z_t)\rvert^2\big)}.
\end{equation}

Then using Lemmas \ref{lemmaEZt2} and \ref{le:esp_diffzt},
\begin{multline}
Var  \leq 4 \left( \frac{16}{\underline{q}(1- \rho^2)} + Cn \epsilon^2 \right)
 + 16 \rho^{-\frac{5\alpha}{4}} \left( 2\epsilon +\frac{10}{\underline{q}} \right)^{\alpha/2} 
  \\ \sum_{1\leq r<t\leq n} \left(\frac{16 \rho^{2(r-1)}}{\underline{q}^2} + \tilde{C} \epsilon^2\right)^{1-\alpha/4}
  \rho^{\frac{\alpha}{4}(t-r)}.
\end{multline}
Since,
\begin{equation}
\begin{split}
 \sum_{1\leq r<t\leq n} &\left(\frac{16 \rho^{2(r-1)}}{\underline{q}^2} + \tilde{C} \epsilon^2\right)^{1-\alpha/4}
  \rho^{\frac{\alpha}{4}(t-r)} 
 \leq 2   \frac{\rho^{\alpha/4}}{1-\rho^{\alpha/4}}   \left( \frac{16}{\underline{q}^2(1- \rho)}   + n \tilde{C} \epsilon^{2-\alpha/2} \right)
\end{split}
\end{equation}
therefore there exists a constant $C_{KL^2}>0$ only depending on k and $\underline{q}^*(=2 \underline{q})$ such that
\begin{equation}
\begin{split}
Var 
\leq C_{KL^2}\frac{n}{\alpha} \left( \frac{\tilde{\epsilon}_n}{\sqrt{u_n}}\right)^{2-\alpha}.
\end{split}
\end{equation}


\subsection{Lemma \ref{lemmaEZt2}: control of $\mathbb{E}^{\theta^*}\left(Z_t^2\right)$}

\begin{lemma}\label{lemmaEZt2}
For all $\theta, \theta^* \in \Delta_k^k(\underline{q}) \times \mathcal{F}^k$
\begin{equation}\label{eq:esp_zt2}
\begin{split}
& \mathbb{E}^{\theta^*}(Z_t^2)\\
& \leq \frac{16 \rho^{2(t-1)}}{\underline{q}^2}
+ 2 \max_{1\leq i \leq k} \int f^*_i(y) \max_{1 \leq j \leq k} \log^2\left( \frac{f^*_j(y)}{f_j(y)}\right) \lambda(dy)\\
& \quad + 
  \frac{32 ~ \lVert Q- Q^*\rVert^2}{\underline{q}^4(1-\rho)^2} 
+ \frac{32}{\underline{q}^4(1-\rho)^2} 
\int  \min\left( 
 \sum_{1 \leq i \leq k} \frac{  \lvert f^*_i(y) - f_i(y)  \rvert^2}{f^*_i(y)} ,\underline{q}^2 k^2 \sum_{1 \leq j \leq k} f^*_j(y)\right) \lambda(dy),
 \end{split}
\end{equation}
with $\rho=\frac{1-k \underline{q}}{1-(k-1)\underline{q}} \leq 1-\underline{q}$.

If moreover Assumptions \eqref{hypa:log2}, \eqref{hypa:maxf*Sc}, \eqref{hypa:qui2S} and \eqref{hyp:voisinage_Q*} hold, then
\begin{equation}\label{eq:esp_zt22}
\mathbb{E}^{\theta^*}(Z_t^2) \leq \frac{16 \rho^{2(t-1)}}{\underline{q}^2} + \tilde{C}\frac{\tilde{\epsilon}_n^2}{u_n}
\end{equation}
where $\tilde{C} \leq 33(1+2k)/\underline{q}^6$.
\end{lemma}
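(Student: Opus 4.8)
The plan is to work directly with the one-step predictive densities, writing $Z_t=\log\big(q_t^{\theta^*,Y_{1:t-1}}(Y_t)/q_t^{\theta,Y_{1:t-1}}(Y_t)\big)$ where $q_t^{\theta,Y_{1:t-1}}=\sum_{i=1}^k f_i\,Q^{\theta,Y_{1:t-1}}_{t,i}$. The decisive step is to insert the hybrid density $\sum_{i=1}^k f_i(Y_t)\,Q^{\theta^*,Y_{1:t-1}}_{t,i}$, which uses the emission densities of $\theta$ but the filter weights of $\theta^*$, and to split $Z_t=A_t+B_t$ with
\[
A_t=\log\frac{\sum_{i=1}^k f^*_i(Y_t)\,Q^{\theta^*,Y_{1:t-1}}_{t,i}}{\sum_{i=1}^k f_i(Y_t)\,Q^{\theta^*,Y_{1:t-1}}_{t,i}},\qquad
B_t=\log\frac{\sum_{i=1}^k f_i(Y_t)\,Q^{\theta^*,Y_{1:t-1}}_{t,i}}{\sum_{i=1}^k f_i(Y_t)\,Q^{\theta,Y_{1:t-1}}_{t,i}}.
\]
Then $Z_t^2\le 2A_t^2+2B_t^2$, and the two pieces isolate respectively the discrepancy of the emission densities and the discrepancy of the filters.

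For $A_t$ I would apply Lemma \ref{le:astuce_frac_somme} (with $b_i=d_i=Q^{\theta^*,Y_{1:t-1}}_{t,i}$) to both $A_t$ and $-A_t$, giving the pointwise bound $A_t^2\le \max_{1\le j\le k}\log^2(f^*_j(Y_t)/f_j(Y_t))$. Since conditionally on $Y_{1:t-1}$ the law of $Y_t$ has density $\sum_i f^*_i\,Q^{\theta^*,Y_{1:t-1}}_{t,i}$, a convex combination in $i$, one gets $\mathbb{E}^{\theta^*}(A_t^2\mid Y_{1:t-1})\le \max_i\int f^*_i(y)\max_j\log^2(f^*_j(y)/f_j(y))\,\lambda(dy)$, a deterministic bound yielding the $\log^2$ term (with its factor $2$). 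For $B_t$ the numerator and denominator are convex combinations of the same $f_i$ with weights bounded below by $\underline q$ thanks to Equation \eqref{eq:min_Q_sachant_obs}; using $|\log u-\log v|\le |u-v|/\min(u,v)$ and factoring out $\sum_i f_i(Y_t)$ gives $|B_t|\le \underline q^{-1}\max_{1\le i\le k}|Q^{\theta^*,Y_{1:t-1}}_{t,i}-Q^{\theta,Y_{1:t-1}}_{t,i}|$, a quantity not depending on $Y_t$, whence $\mathbb{E}^{\theta^*}(B_t^2)\le \underline q^{-2}\,\mathbb{E}^{\theta^*}\big(\max_i(Q^{\theta^*,Y_{1:t-1}}_{t,i}-Q^{\theta,Y_{1:t-1}}_{t,i})^2\big)$.

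It then remains to control the expected squared gap between the filters run under $\theta^*$ and under $\theta$ on the same observations. This is the genuine difficulty, and it is exactly the content of the filter-stability Lemma \ref{lesumQ}, which I would invoke: its refinement of the forgetting estimates of \cite{DoMa01} and \cite{Do04} bounds this expectation by a term in $\rho^{2(t-1)}$ (the two stationary initialisations, decaying geometrically), a term proportional to $\lVert Q-Q^*\rVert^2/(\underline q^2(1-\rho)^2)$ (the accumulated transition perturbation, damped by the contraction), and a term proportional to $(\underline q^2(1-\rho)^2)^{-1}\int\min\big(\sum_i|f^*_i-f_i|^2/f^*_i,\ \underline q^2k^2\sum_j f^*_j\big)\,\lambda(dy)$ (the accumulated emission perturbation). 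Substituting this into $2\mathbb{E}^{\theta^*}(B_t^2)$ and adding the $A_t$ contribution assembles Equation \eqref{eq:esp_zt2}.

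Finally, for Equation \eqref{eq:esp_zt22} I would feed the hypotheses into \eqref{eq:esp_zt2}: bound $\max_j\log^2\le\sum_j\log^2$ and use \eqref{hypa:log2} for the first integral; use $\lVert Q-Q^*\rVert^2\le\tilde\epsilon_n^2/u_n$ from \eqref{hyp:voisinage_Q*} together with $1-\rho\ge\underline q$ for the transition term; and split the $\min$-integral over $S$ and $S^c$, applying \eqref{hypa:qui2} on $S$ (where the first argument of the min is used) and \eqref{hypa:maxf*Sc} on $S^c$ (where the second argument $\underline q^2k^2\sum_j f^*_j$ is used). Collecting the resulting multiples of $\tilde\epsilon_n^2/u_n$ and keeping the $\rho^{2(t-1)}$ term separate produces the constant $\tilde C\le 33(1+2k)/\underline q^6$. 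This last step is routine bookkeeping; the only real obstacle is the filter-stability estimate, which is deferred to Lemma \ref{lesumQ}.
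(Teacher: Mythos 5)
Your proposal is correct and takes essentially the same route as the paper: the paper applies Lemma \ref{le:astuce_frac_somme} together with the filter lower bound \eqref{eq:min_Q_sachant_obs} to obtain $Z_t^2\le 2\max_{1\le j\le k}\log^2\big(f^*_j(Y_t)/f_j(Y_t)\big)+2\underline{q}^{-2}\big(\sum_{j}|Q^{\theta^*,Y_{1:t-1}}_{t,j}-Q^{\theta,Y_{1:t-1}}_{t,j}|\big)^2$ — your hybrid-density split $Z_t=A_t+B_t$ is exactly this factorization written as a triangle inequality — and then defers the filter term to Lemma \ref{lesumQ} and closes the second claim with the same bookkeeping. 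One minor remark: for the $\min$-integral you invoke \eqref{hypa:qui2}, whereas the lemma statement lists \eqref{hypa:qui2S}; since the integrand is $|f^*_i-f_i|^2/f^*_i$, your choice is the one that actually matches (and is what Lemma \ref{lemma1} assumes), the paper's listing appearing to be a typo.
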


\begin{proof}[Proof of Lemma \ref{lemmaEZt2}]
Let $Q^{\mu,\theta,Y_{1:t-1}}_{t,i} = \mathbb{P}^{\theta}(X_t=i | Y_{1:t-1}, X_1\sim \mu)$
, where $t\geq 1$, $1 \leq i \leq k$,   $\mu \in \Delta_k$ and when $\mu$ is not specified, the stationary initial probability distribution is considered. Then the conditional density function of $Y_t$ given $Y_{1:t-1}$ with respect to $\lambda$ is:
\[
\sum_{i=1}^k f_i(\cdot)Q^{\theta,Y_{1:t-1}}_{t,i}; 
\]
so that
\begin{equation}\label{Z}
\begin{split}
\mathbb{E}^{\theta^*}\left(Z_t^2\right)
 & =\mathbb{E}^{\theta^*}\left(\left(\log\frac{\sum_{i=1}^{k} f^*_i(Y_t)Q^{\theta^*, Y_{1:t-1}}_{t,i}}{\sum_{j=1}^{k}  f_j(Y_t)Q^{\theta, Y_{1:t-1}}_{t,j}}
\right)^2\right)  \\
 & \leq 2 \mathbb{E}^{\theta^*}\left(\max_{1 \leq j \leq k} \log^2\left(\frac{f^*_j(Y_t)}{f_j(Y_t)}\right)\right)
+ \frac{2}{\underline{q}^2} 
\mathbb{E}^{\theta^*}\left( \left( 
\sum_{j=1}^k 
\lvert Q^{\theta^*, Y_{1:t-1}}_{t,j}- Q^{\theta, Y_{1:t-1}}_{t,j}\rvert 
\right)^2 \right)
\end{split}
\end{equation}
using Equation \eqref{eq:min_Q_sachant_obs}
and Lemma \ref{le:astuce_frac_somme}.

Combining Equation \eqref{Z} and Lemma \ref{lesumQ} (Equation \eqref{eq_lemmeeq1}), we obtain Equation \eqref{eq:esp_zt2}.
Moreover, using Assumption \eqref{hypa:log2}, 
\begin{equation}\label{eq:emaxlog2}
\mathbb{E}^{\theta^*}\left(\max_{1 \leq j \leq k} \log^2\left(\frac{f^*_j(Y_t)}{f_j(Y_t)}\right)\right)
\leq \epsilon^2.
\end{equation}
Finally, combining Equations \eqref{Z}, \eqref{eq:emaxlog2} and Lemma \ref{lesumQ} (Equation \eqref{eq_lemmeeq2}), we obtain Equation \eqref{eq:esp_zt22}.
\end{proof}

\begin{lemma}\label{lesumQ}
For all $\theta, \theta^* \in \Delta_k^k(\underline{q}) \times \mathcal{F}^k$ and $\mu, \mu^* \in \Delta_k(\underline{q})$,
\begin{equation}\label{eq_lemmeeq1}
\begin{split}
&\mathbb{E}^{\theta^*}\left( \left( 
\sum_{j=1}^k 
\lvert Q^{\theta^*, Y_{1:t-1}}_{t,j}- Q^{\theta, Y_{1:t-1}}_{t,j}\rvert 
\right)^2 \right)\\
&\leq 
8 \rho^{2(t-1)}
+ 
  \frac{16 ~ \lVert Q- Q^*\rVert^2}{\underline{q}^2(1-\rho)^2} \\
& \quad 
+ \frac{16}{\underline{q}^2(1-\rho)^2} 
\int  \min\left( 
 \sum_{1 \leq i \leq k} \frac{  \lvert f^*_i(y) - f_i(y)  \rvert^2}{f^*_i(y)} ,\underline{q}^2 k^2 \sum_{1 \leq j \leq k} f^*_j(y)\right) \lambda(dy),
 \end{split}
\end{equation}
with $\rho=\frac{1-k \underline{q}}{1-(k-1)\underline{q}} \leq 1-\underline{q}$.

If moreover Assumptions \eqref{hypa:log2}, \eqref{hypa:maxf*Sc}, \eqref{hypa:qui2S}  and \eqref{hyp:voisinage_Q*} hold, then
\begin{equation}\label{eq_lemmeeq2}
\mathbb{E}^{\theta^*}\left( \left( 
\sum_{j=1}^k 
\lvert Q^{\theta^*, Y_{1:t-1}}_{t,j}- Q^{\theta, Y_{1:t-1}}_{t,j}\rvert 
\right)^2 \right)
\leq 8 \rho^{2(t-1)} + C' \frac{\tilde{\epsilon}_n^2}{u_n}
\end{equation}
where $C'\leq 16(1+2k)/\underline{q}^4$.
\end{lemma}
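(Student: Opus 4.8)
The plan is to read $Q^{\theta^*,Y_{1:t-1}}_{t,\cdot}$ and $Q^{\theta,Y_{1:t-1}}_{t,\cdot}$ as the outputs at time $t$ of the filtering recursion \eqref{eq:min_Q_sachant_obs}, and to exploit the exponential forgetting of this recursion in the spirit of the stability estimates of \cite{DoMa01} and \cite{Do04}. Since every entry of $Q$ and $Q^*$ is at least $\underline{q}$, the map sending the filter at time $t-1$ to the filter at time $t$ in \eqref{eq:min_Q_sachant_obs} forgets its initial condition at the geometric rate $\rho=\frac{1-k\underline{q}}{1-(k-1)\underline{q}}\le 1-\underline{q}$, uniformly in the observation. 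This lets me discard the (possibly distinct) initial laws $\mu,\mu^*\in\Delta_k(\underline{q})$ at geometric speed and propagate the per-step parameter discrepancy with geometrically decaying weights.

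First I would introduce $\Delta_t=\sum_{j=1}^k\lvert Q^{\theta^*,Y_{1:t-1}}_{t,j}-Q^{\theta,Y_{1:t-1}}_{t,j}\rvert$ and derive a one-step inequality $\Delta_t\le \rho\,\Delta_{t-1}+\delta_{t-1}$, where the remainder $\delta_{t-1}$ measures the discrepancy produced at step $t-1$ by updating with $(Q,f)$ rather than $(Q^*,f^*)$. This is obtained by adding and subtracting the filter that applies the $\theta^*$-update to $Q^{\theta,Y_{1:t-2}}_{t-1,\cdot}$: the contraction absorbs the difference of the two $\theta^*$-updates, while $\delta_{t-1}$ is the $\ell_1$-norm of the difference between the $\theta^*$- and $\theta$-updates evaluated at the same filter. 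Unrolling gives $\Delta_t\le \rho^{t-1}\Delta_1+\sum_{s=1}^{t-1}\rho^{t-1-s}\delta_s$ with $\Delta_1=\lVert\mu^*-\mu\rVert_1\le 2$. Squaring with $(a+b)^2\le 2a^2+2b^2$, applying Cauchy--Schwarz to $\sum_s\rho^{t-1-s}\delta_s$ and then taking $\mathbb{E}^{\theta^*}$ yields
\[
\mathbb{E}^{\theta^*}(\Delta_t^2)\le 2\Delta_1^2\rho^{2(t-1)}+\frac{2}{1-\rho}\sum_{s=1}^{t-1}\rho^{t-1-s}\,\mathbb{E}^{\theta^*}(\delta_s^2),
\]
so the initial-condition term produces the $\rho^{2(t-1)}$ contribution (and, with $\Delta_1\le2$, the constant $8$), while once $\mathbb{E}^{\theta^*}(\delta_s^2)$ is bounded uniformly in $s$ the remaining geometric sum $\sum_s\rho^{t-1-s}\le(1-\rho)^{-1}$ produces the $(1-\rho)^{-2}$ prefactor of \eqref{eq_lemmeeq1}.

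The delicate step, which I expect to be the main obstacle, is the uniform bound on $\mathbb{E}^{\theta^*}(\delta_s^2)$. Writing the two updates through \eqref{eq:min_Q_sachant_obs}, $\delta_s$ splits into a transition contribution and an emission contribution. The transition part is controlled using that the denominators in \eqref{eq:min_Q_sachant_obs} are bounded below by $\underline{q}$ together with Lemma \ref{le:astuce_frac_somme}, giving a bound proportional to $\lVert Q-Q^*\rVert/\underline{q}$ and hence the term $\frac{16\lVert Q-Q^*\rVert^2}{\underline{q}^2(1-\rho)^2}$. For the emission part, taking $\mathbb{E}^{\theta^*}$ integrates $Y_s$ against the true predictive density, and \emph{pointwise} in the observation the squared emission contribution admits both the quadratic bound $\sum_i\lvert f^*_i-f_i\rvert^2/f^*_i$ (when $f$ and $f^*$ are comparable) and the crude bound $\underline{q}^2k^2\sum_j f^*_j$ coming from the fact that the filter coordinates are probabilities; keeping the smaller of the two explains the truncation by the minimum inside the integral, and collecting the powers of $\underline{q}$ gives the factor $\frac{16}{\underline{q}^2(1-\rho)^2}$.

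Finally, to pass from \eqref{eq_lemmeeq1} to \eqref{eq_lemmeeq2}, I would bound $\lVert Q-Q^*\rVert^2\le \tilde{\epsilon}_n^2/u_n$ by \eqref{hyp:voisinage_Q*}, split the integral over $S$ and $S^c$, control $\int_S\sum_i\lvert f^*_i-f_i\rvert^2/f^*_i$ by \eqref{hypa:qui2} and $\int_{S^c}\underline{q}^2k^2\sum_j f^*_j$ by \eqref{hypa:maxf*Sc}, and use $(1-\rho)^{-1}\le \underline{q}^{-1}$ (since $\rho\le 1-\underline{q}$). Gathering these bounds turns the two data-dependent terms of \eqref{eq_lemmeeq1} into a single $C'\tilde{\epsilon}_n^2/u_n$ with $C'\le 16(1+2k)/\underline{q}^4$, which is \eqref{eq_lemmeeq2}.
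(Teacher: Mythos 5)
Your plan reproduces the architecture of the paper's proof --- initial-condition forgetting at rate $\rho$, geometrically weighted accumulation of per-step parameter discrepancies, squaring plus Cauchy--Schwarz to produce the $(1-\rho)^{-2}$ factor, the pointwise minimum of the quadratic and crude bounds for the emission contribution, and the final $S$/$S^c$ split --- but the step that drives your recursion is false. The one-step filter update \eqref{eq:min_Q_sachant_obs} is \emph{not} an $\ell_1$-contraction with coefficient $\rho$ uniformly in the observation, so the inequality $\Delta_t\le\rho\,\Delta_{t-1}+\delta_{t-1}$ cannot be derived as you claim. The update is a Bayes step followed by multiplication by $Q$, and the Bayes step can expand $\ell_1$ distances by a factor of order $1/\underline{q}$. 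Concretely, for $k=2$ take two filter configurations $\mu=(\underline{q},1-\underline{q})$ and $\nu=(\underline{q}+\epsilon,1-\underline{q}-\epsilon)$ (both attainable, since a filter row can be arbitrarily close to a row of $Q$), an observation $y$ with likelihood ratio $f_2(y)/f_1(y)=\underline{q}/(1-\underline{q})$, and $Q$ with rows $(1-\underline{q},\underline{q})$ and $(\underline{q},1-\underline{q})$. A direct computation shows that the same-parameter update multiplies $\lVert\mu-\nu\rVert_1$ by approximately $\frac{1-2\underline{q}}{4\underline{q}(1-\underline{q})}$, which exceeds $\rho=\frac{1-2\underline{q}}{1-\underline{q}}$ whenever $\underline{q}<1/4$, and exceeds $1$ for small $\underline{q}$: the map can strictly expand. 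Filter forgetting is a genuinely multi-step phenomenon --- note the constant $2$ in the bound $2\rho^{l-1}$ of Equation \eqref{7}, which makes that statement vacuous at $l=1$ --- and it cannot be obtained by iterating a one-step $\ell_1$ estimate.

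This is precisely why the paper argues differently: it telescopes the difference of the two filters over the time at which the $\theta$-versus-$\theta^*$ discrepancy is injected (Equations \eqref{3}, \eqref{5}, \eqref{6}), and damps each injected discrepancy through the remaining same-parameter updates using the nontrivial multi-step stability results, namely Proposition 1 of \cite{DoMa01} and Corollary 1 of \cite{Do04} (Equation \eqref{7}), arriving at the unrolled bound \eqref{8}, which has the same shape as your unrolled inequality up to constants. From that point on your proposal is sound and coincides with the paper: your Cauchy--Schwarz step is Equation \eqref{var_carre}; your bound on $\mathbb{E}^{\theta^*}(\delta_s^2)$ is Equation \eqref{ED}, where integrating $Y_s$ against the predictive density is what converts the ratio into $\sum_i\lvert f^*_i-f_i\rvert^2/f^*_i$ truncated at $\underline{q}^2k^2\sum_j f^*_j$; and your use of \eqref{hypa:qui2} on $S$ together with \eqref{hypa:maxf*Sc} on $S^c$, \eqref{hyp:voisinage_Q*}, and $(1-\rho)^{-1}\le\underline{q}^{-1}$ to obtain \eqref{eq_lemmeeq2} is the coherent reading (the quadratic term involves $f_i$, not $\tilde f_i$, so \eqref{hypa:qui2} is indeed the assumption that does the work). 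To repair the proof, replace the one-step contraction by the cited stability estimates, or prove an analogous multi-step Lipschitz forgetting bound; the rest of your argument can then stand unchanged.
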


\begin{proof}[Proof of Lemma \ref{lesumQ}]

We first control  $\sum_{i=1}^k \lvert  Q^{\theta^*,Y_{1:t-1}}_{t,i} - Q^{\mu^*,\theta,Y_{1:t-1}}_{t,i} \rvert $.
For this purpose, we are going to use  a modified version of Proposition 1 of \cite{DoMa01}.
By Proposition 1 of \cite{DoMa01} and for all $\theta, \theta^*$ in $\Delta_k^k(\underline{q}) \times \mathcal{F}^k$ we can control 
the $L_1$-norm between two conditional probabilities
of the state $t$ when the initial probabilities are equal.

\begin{equation}\label{3}
\begin{split}
 \sum_{i=1}^k & \lvert  Q^{\theta^*,Y_{1:t-1}}_{t,i} - Q^{\mu^*,\theta,Y_{1:t-1}}_{t,i} \rvert \\
& \leq  \sum_{j=1}^k \left\lvert Q^{(Q_{2,\cdot}^{\theta^*,Y_1},Q,f),Y_{2:t-1}}_{t-1,j}-
Q^{(Q_{2,\cdot}^{\theta^*,Y_1},Q^*,f^*),Y_{2:t-1}}_{t-1,j}\right\rvert  + \frac{1}{2}  \left( \frac{1-k \underline{q}}{1-(k-1)\underline{q}} \right) ^{t-1}* \\
 &  \qquad  \sum_{u=1}^k \Bigg\lvert  \frac{\sum_{j=1}^k (A^{Y_{t-1},\theta} \dots A^{Y_2,\theta})_{j,u}
(A^{Y_1,\theta}\mu^*)_u}{\sum_{v=1}^k (A^{Y_{t-1},\theta} \dots A^{Y_2,\theta}A^{Y_1,\theta}\mu^*)_v} -  \frac{\sum_{j=1}^k (A^{Y_{t-1},\theta} \dots A^{Y_2,\theta})_{j,u}
(A^{Y_1,\theta^*}\mu^*)_u}{\sum_{v=1}^k (A^{Y_{t-1},\theta} \dots A^{Y_2,\theta}A^{Y_1,\theta^*}\mu^*)_v} \Bigg\rvert
\end{split}
 \end{equation}
with $\left(A^{Y,\theta}_{i,j}\right)_{1 \leq i,j\leq k}= \left(Q_{j,i} f_j(Y)\right)_{1 \leq i,j\leq k}$.
And 
\begin{equation}\label{4}
\begin{split}
 \sum_{u=1}^k & \Bigg\lvert  \frac{\sum_{j=1}^k (A^{Y_{t-1},\theta} \dots A^{Y_2,\theta})_{j,u}
(A^{Y_1,\theta}\mu^*)_u}{\sum_{v=1}^k (A^{Y_{t-1},\theta} \dots A^{Y_2,\theta}A^{Y_1,\theta}\mu^*)_v}  -  \frac{\sum_{j=1}^k (A^{Y_{t-1},\theta} \dots A^{Y_2,\theta})_{j,u}
(A^{Y_1,\theta^*}\mu^*)_u}{\sum_{v=1}^k (A^{Y_{t-1},\theta} \dots A^{Y_2,\theta}A^{Y_1,\theta^*}\mu^*)_v} \Bigg\rvert\\
& = \sum_{u=1}^k \Bigg\lvert  \frac{\sum_{j=1}^k (A^{Y_{t-1},\theta} \dots A^{Y_2,\theta})_{j,u}
(A^{Y_1,\theta^*}\mu^* - A^{Y_1,\theta}\mu^*)_u}{\sum_{v=1}^k (A^{Y_{t-1},\theta} \dots A^{Y_2,\theta}A^{Y_1,\theta^*}\mu^*)_v}\\
& \quad -\frac{\sum_{w=1}^k \sum_{i=1}^k (A^{Y_{t-1},\theta} \dots A^{Y_2,\theta})_{i,w}
(A^{Y_1,\theta^*}\mu^* - A^{Y_1,\theta}\mu^*)_w}{\sum_{v=1}^k (A^{Y_{t-1},\theta} \dots A^{Y_2,\theta}A^{Y_1,\theta^*}\mu^*)_v} \\
& \qquad \frac{\sum_{j=1}^k (A^{Y_{t-1},\theta} \dots A^{Y_2,\theta})_{j,u}
(A^{Y_1,\theta}\mu^*)_u}{\sum_{v=1}^k (A^{Y_{t-1},\theta} \dots A^{Y_2,\theta}A^{Y_1,\theta}\mu^*)_v}
\Bigg\rvert\\
& \leq 2  \sum_{u=1}^k \min\bigg(  \left\lvert  \frac{\sum_{j=1}^k (A^{Y_{t-1},\theta} \dots A^{Y_2,\theta})_{j,u}
(A^{Y_1,\theta^*}\mu^* - A^{Y_1,\theta}\mu^*)_u}{\sum_{v=1}^k (A^{Y_{t-1},\theta} 
\dots A^{Y_2,\theta}A^{Y_1,\theta^*}\mu^*)_v}\right\rvert  ,1\bigg) \\
& \leq 2\min \left( \max_{1\leq u\leq k} \frac{ \sum_{i=1}^k \mu^*_i  \lvert Q_{i,u} f_i(Y_1) - Q^*_{i,u} f_i^*(Y_1)\rvert}{
 \sum_{i=1}^k \mu^*_i Q^*_{i,u} f_i^*(Y_1)} , k \right)  \\
& \leq 2  \left( \frac{\lVert Q -Q^* \rVert}{\underline{q}}  +  
\min\left(\frac{(1-(k-1)\underline{q})}{\underline{q}}
\frac{\sum_{i=1}^k \lvert f_i^*(Y_1) - f_i(Y_1)  \rvert \mu^*_i }{\sum_{j=1}^k f_j^*(Y_1) \mu^*_j},k \right) \right), 
\end{split}
\end{equation}
using Lemma \ref{le:astuce_frac_somme}.
Combining Equations (\ref{3}) and (\ref{4}),

\begin{equation}\label{5}
 \begin{split}
\sum_{i=1}^k & \lvert  Q^{\theta^*,Y_{1:t-1}}_{t,i} - Q^{\mu^*,\theta,Y_{1:t-1}}_{t,i} \rvert \\
 & \leq  \sum_{j=1}^k \left\lvert Q^{(Q_{2,\cdot}^{\theta^*,Y_1},Q,f),Y_{2:t-1}}_{t-1,j}-
Q^{(Q_{2,\cdot}^{\theta^*,Y_1},Q^*,f^*),Y_{2:t-1}}_{t-1,j}\right\rvert 
 +   \left( \frac{1-k \underline{q}}{1-(k-1)\underline{q}} \right) ^{t-1}* \\
& \qquad \left( \frac{\lVert Q -Q^* \rVert}{\underline{q}}  +  
\min\left(\frac{(1-(k-1)\underline{q})}{\underline{q}}
\frac{\sum_{i=1}^k \lvert f_i^*(Y_1) - f_i(Y_1)  \rvert \mu^*_i }{\sum_{j=1}^k f_j^*(Y_1) \mu^*_j},k\right) \right).
\end{split}
\end{equation}

By repeating the arguments of Equation (\ref{4}), we show that 
\begin{equation*}
\begin{split}
\sum_{i=1}^k & \lvert  Q^{(Q^{\theta^*,Y_{1:t-2}}_{t-1},Q^*,f^*),Y_{1:t-1}}_{2,i} - Q^{(Q^{\theta^*,Y_{1:t-2}}_{t-1},Q,f),Y_{1:t-1}}_{2,i} \rvert \\
&=  \sum_{i=1}^k \left\lvert   \frac{\sum_{i=1}^k A^{Y_{t-1},\theta^*}_{j,i}Q^{\theta^*,Y_{1:t-2}}_{t-1,i}}{\sum_{i,u=1}^k A^{Y_{t-1},\theta^*}_{u,i}Q^{\theta^*,Y_{1:t-2}}_{t-1,i}} 
- \frac{\sum_{i=1}^k A^{Y_{t-1},\theta}_{j,i}Q^{\theta^*,Y_{1:t-2}}_{t-1,i}}{\sum_{i,u=1}^k A^{Y_{t-1},\theta}_{u,i}Q^{\theta^*,Y_{1:t-2}}_{t-1,i}}    \right\rvert \\
& \leq 2 \left( \frac{\lVert Q -Q^* \rVert}{\underline{q}}  +  
\min\left(\frac{(1-(k-1)\underline{q})}{\underline{q}}
\frac{\sum_{i=1}^k \lvert f_i^*(Y_{t-1}) - f_i(Y_{t-1})  \rvert Q^{\theta^*,Y_{1:t-2}}_{t-1,i} }{\sum_{j=1}^k f_j^*(Y_{t-1}) Q^{\theta^*,Y_{1:t-2}}_{t-1,j}} ,k \right) \right).
\end{split}
\end{equation*}

By induction on (\ref{5}),

\begin{equation}\label{6}
 \begin{split}
\sum_{i=1}^k & |Q^{\theta^*,Y_{1:t-1}}_{t,i} - Q^{\mu^*,\theta,Y_{1:t-1}}_{t,i}| \\
&\leq 2 \left( \frac{\lVert Q -Q^* \rVert}{\underline{q}}  +  
\min\left(\frac{(1-(k-1)\underline{q})}{\underline{q}}
\frac{\sum_{i=1}^k \lvert f_i^*(Y_{t-1}) - f_i(Y_{t-1})  \rvert Q^{\theta^*,Y_{1:t-2}}_{t-1,i} }{\sum_{j=1}^k f_j^*(Y_{t-1}) Q^{\theta^*,Y_{1:t-2}}_{t-1,j}},k \right)  \right) \\ 
& \quad + \sum_{u=3}^{t}   \left( \frac{1-k \underline{q}}{1-(k-1)\underline{q}} \right)^{u-1}\\
& \quad \left(  \frac{\lVert Q -Q^* \rVert}{\underline{q}}  +  
\min\left(\frac{(1-(k-1)\underline{q})}{\underline{q}} \frac{\sum_i \lvert f_i^*(Y_{t-u+1}) - 
 f_i(Y_{t-u+1})  \rvert Q^{\theta^*,Y_{1:t-u}}_{t-u+1,i}
 }{\sum_j f_j^*(Y_{t-u+1}) Q^{\theta^*,Y_{1:t-u}}_{t-u+1,j}},k \right)  \right).
 \end{split}
\end{equation}

 Using Corollary 1 of \cite{Do04},  
 we can control the $\ell_1$-norm between two conditional probabilities of the state $t$
 for the same parameter $\theta$ but different initial probabilities:
 for all $\theta \in \Delta_k^k(\underline{q}) \times \mathcal{F}^k$, $\mu, \tilde{\mu} \in \Delta_k$ and for all $y_{1:l-1} \in \{y ~ : ~ \exists i,~ f^*_i(y)>0  \}^{l-1}$
\begin{equation}\label{7}
\sum_{i=1}^k \left\lvert  Q^{\mu,\theta,y_{1:l-1}}_{i,l} - Q^{\tilde{\mu},\theta,y_{1:l-1}}_{i,l} \right\rvert
\leq 2 \rho^{l-1}
.\end{equation}
 
Combining Equations (\ref{6}) and (\ref{7}), we obtain
 
\begin{equation}\label{8}
 \begin{split}
\sum_{i=1}^k & |Q^{\mu^*,\theta^*,Y_{1:t-1}}_{t,i} - Q^{\mu,\theta,Y_{1:t-1}}_{t,i}| \\
&\leq 2 \left( \frac{\lVert Q -Q^* \rVert}{\underline{q}}  +  
\min\left(\frac{1}{\underline{q}}
\frac{\sum_{i=1}^k \lvert f_i^*(Y_{t-1}) - f_i(Y_{t-1})  \rvert Q^{\theta^*,Y_{1:t-2}}_{t-1,i} }{\sum_{j=1}^k f_j^*(Y_{t-1}) Q^{\theta^*,Y_{1:t-2}}_{t-1,j}} ,k\right)  \right)\\ 
& \quad +2 \sum_{u=3}^{t}   \left( \frac{1-k \underline{q}}{1-(k-1)\underline{q}} \right)^{u-1}\\
& \quad \underbrace{\left(  \frac{\lVert Q -Q^* \rVert}{\underline{q}}  +  
\min\left(\frac{1}{\underline{q}} \frac{\sum_i \lvert f_i^*(Y_{t-u+1}) - 
 f_i(Y_{t-u+1})  \rvert Q^{\theta^*,Y_{1:t-u}}_{t-u+1,i}
 }{\sum_j f_j^*(Y_{t-u+1}) Q^{\theta^*,Y_{1:t-u}}_{t-u+1,j}}  ,k\right)\right)}_{=\Delta_{t-u+1}}\\
 & \quad + 4 \left( \underbrace{\frac{1-k \underline{q}}{1-(k-1)\underline{q}}}_{=\rho} \right)^{t-1} \\
 & \leq \frac{2}{\rho} \sum_{u=1}^{t-1} \rho^{u} \Delta_{t-u} + 4\rho^{t-1}.
\end{split}
\end{equation}

Then
\begin{equation}\label{var_carre}
 \begin{split}
\bigg(\sum_{i=1}^k  \lvert Q^{\mu^*,\theta^*,Y_{1:t-1}}_{t,i} - Q^{\mu,\theta,Y_{1:t-1}}_{t,i}\rvert \bigg)^2 
& \leq \frac{8}{\rho^2} \left(\sum_{u=1}^{t-1} \rho^u \right)
\left(\sum_{u=1}^{t-1} \rho^u \Delta_{t-u}^2\right) +8 \rho^{2(t-1)},
\end{split}
\end{equation}
using Cauchy-Schwarz inequality.
Moreover, using Lemma \ref{le:astuce_frac_somme},
\begin{equation}\label{ED}
\begin{split}
&\mathbb{E}^{\theta^*}(\Delta_{t-u}^2)\\
& \leq 2 \frac{\lVert Q- Q^*\rVert^2}{\underline{q}^2} +  \frac{2}{\underline{q}^2}  \mathbb{E}^{\theta^*}\left( \min\left( \left(
\frac{\sum_i \lvert f_i^*(Y_{t-u}) - 
 f_i(Y_{t-u})  \rvert Q^{\theta^*,Y_{1:t-u-1}}_{t-u,i}
 }{\sum_j f_j^*(Y_{t-u}) Q^{\theta^*,Y_{1:t-u-1}}_{t-u,j}} 
 \right)^2,(\underline{q}k)^2\right) \right)\\
 & \leq 2 \frac{\lVert Q- Q^*\rVert^2}{\underline{q}^2} \\
 &  \hspace{1cm} +
 \frac{2}{\underline{q}^2}
 E\left( \int \sum_{1\leq \iota \leq k} Q_{t-u,\iota}^{\theta^*,Y_{1:t-u-1}}f_\iota(y)
 \min\left( \left(
\frac{\sum_i \lvert f_i^*(y) - 
 f_i(y)  \rvert Q^{\theta^*,Y_{1:t-u-1}}_{t-u,i}
 }{\sum_j f_j^*(y) Q^{\theta^*,Y_{1:t-u-1}}_{t-u,j}} 
 \right)^2,(\underline{q}k)^2\right)   \lambda(dy) \right) \\
 & \leq 2 \frac{\lVert Q- Q^*\rVert^2}{\underline{q}^2} + 
 \frac{2}{\underline{q}^2}
 \int  \min\left( 
 \sum_{1 \leq i \leq k} \frac{  \lvert f^*_i(y) - f_i(y)  \rvert^2}{f^*_i(y)} ,\underline{q}^2 k^2 \sum_{1 \leq j \leq k} f^*_j(y)\right) \lambda(dy).
%
\end{split}
\end{equation}
Combining Equations \eqref{var_carre} and \eqref{ED}, we obtain Equation \eqref{eq_lemmeeq1} which implies Equation \eqref{eq_lemmeeq2} under Assumptions  \eqref{hypa:log2}, \eqref{hypa:maxf*Sc}, \eqref{hypa:qui2S} and \eqref{hyp:voisinage_Q*}. This concludes the proof of Lemma \ref{lesumQ}.

\end{proof}
%

\subsection{Lemma \ref{le:esp_diffzt}: control of $\mathbb{E}^{\theta^*}\big(\lvert \mathbb{E}^{\theta^*}(Z_t |Y_{1:r-1} ) - \mathbb{E}^{\theta^*}(Z_t)\rvert^2\big)$}

In the following lemma we show that   $\mathbb{E}^{\theta^*}\left(\big(\lvert \mathbb{E}^{\theta^*}(Z_t |Y_{1:r-1} )- \mathbb{E}^{\theta^*}(Z_t)\rvert\big)^2 \right)$ geometrically decreases to $0$ when $t$ tends to $+\infty$, using the exponential forgetting of the Markov chain.

\begin{lemma}\label{le:esp_diffzt}
For all $\theta, \theta^* \in \Delta_k^k(\underline{q}) \times \mathcal{F}^k$ and $\alpha \in (0,2)$,
\begin{equation}\label{eq:finoubli}
\begin{split}
& \mathbb{E}^{\theta^*} \big(\lvert \mathbb{E}^{\theta^*}(Z_t |Y_{1:r-1} ) - \mathbb{E}^{\theta^*}(Z_t)\rvert^2\big) \\
& \leq 8 \mathbb{E}^{\theta^*}( Z_t^2 )^{\frac{2-\alpha}{2}}\rho^{-\frac{5\alpha}{2}}  \rho^{\frac{\alpha}{2}(t-r)}  \bigg(2\max_{1\leq j \leq k} \int f^*_j(y) \max_{1 \leq i \leq k} \left\lvert \log\frac{f^*_i(y)}{f_i(y)} \right\rvert \lambda(dy)
+ \frac{10}{\underline{q}}\bigg)^\alpha .
\end{split}
\end{equation}
If moreover Assumption \eqref{hypa:log2} holds then 
\begin{equation}\label{eq:finoubli2}
\begin{split}
& \mathbb{E}^{\theta^*} \big(\lvert \mathbb{E}^{\theta^*}(Z_t |Y_{1:r-1} ) - \mathbb{E}^{\theta^*}(Z_t)\rvert^2\big)  \leq 8 \mathbb{E}^{\theta^*}( Z_t^2 )^{\frac{2-\alpha}{2}}\rho^{-\frac{5\alpha}{2}}  \rho^{\frac{\alpha}{2}(t-r)}  \bigg(2 \frac{\tilde{\epsilon}_n}{\sqrt{u_n}}
+ \frac{10}{\underline{q}}\bigg)^\alpha .
\end{split}
\end{equation}
\end{lemma}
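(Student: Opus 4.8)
The plan is to read the left-hand side of \eqref{eq:finoubli} as the variance under $P^{\theta^*}$ of the random variable $\mathbb{E}^{\theta^*}(Z_t\mid Y_{1:r-1})$, since $\mathbb{E}^{\theta^*}(Z_t)=\mathbb{E}^{\theta^*}\big[\mathbb{E}^{\theta^*}(Z_t\mid Y_{1:r-1})\big]$ by the tower property. The whole point is then to show that this conditional expectation barely depends on the prefix $Y_{1:r-1}$ once $t-r$ is large, i.e.\ that the chain \emph{forgets} its past geometrically. First I would write $Z_t=\log\big(q_t^{\theta^*,Y_{1:t-1}}(Y_t)/q_t^{\theta,Y_{1:t-1}}(Y_t)\big)$ and isolate the two channels through which $Y_{1:r-1}$ enters $\mathbb{E}^{\theta^*}(Z_t\mid Y_{1:r-1})$: (i) the $\theta^*$-law of the future observations $Y_{r:t}$, which given $Y_{1:r-1}$ depends on the prefix only through the true predictive filter at time $r$; and (ii) the predictive filters $Q^{\theta^*,Y_{1:t-1}}_{t,\cdot}$ and $Q^{\theta,Y_{1:t-1}}_{t,\cdot}$ appearing inside $Z_t$, which are deterministic functions of the whole prefix.

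Both channels are damped geometrically. For (ii) this is exactly the exponential forgetting already established in the proof of Lemma~\ref{lesumQ}: by Corollary~1 of \cite{Do04}, namely Equation~\eqref{7}, two filters driven by the same observations but started from different initial laws differ in $\ell_1$ by at most $2\rho^{t-r}$, with $\rho=\frac{1-k\underline{q}}{1-(k-1)\underline{q}}\le 1-\underline{q}$. For (i) the same rate is furnished by the uniform mixing of the true chain, $\phi(m)\le(1-\underline{q})^m$, recalled in Section~\ref{se:notations}. Combining these with the uniform lower bound $Q^{\cdot,Y_{1:t-1}}_{t,i}\ge\underline{q}$ from \eqref{eq:min_Q_sachant_obs} and Lemma~\ref{le:astuce_frac_somme}, I would produce a pointwise estimate of the form $|\mathbb{E}^{\theta^*}(Z_t\mid Y_{1:r-1})-\mathbb{E}^{\theta^*}(Z_t)|\le \min\big(G,\ \rho^{-5/2}\rho^{(t-r)/2}B\big)$, where $G$ is a coupling difference with $\mathbb{E}^{\theta^*}(G^2)\lesssim \mathbb{E}^{\theta^*}(Z_t^2)$ and $B=2\max_{j}\int f^*_j\max_i|\log(f^*_i/f_i)|\,\lambda(dy)+10/\underline{q}$ is the $L_1$ budget, its emission part bounding the log-ratio term and $10/\underline{q}$ absorbing the filter-ratio terms. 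The truncation to a $\min$ is essential: the raw forgetting estimate involves the possibly unbounded $Z_t$, so it must be capped by a constant to keep the first-moment budget $B$ finite, which is precisely the role played by the $\min(\cdot,\underline{q}^2k^2\sum_j f^*_j)$ truncations already appearing in \eqref{eq_lemmeeq1}.

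The last step is the interpolation producing the exponent $\alpha$. Using $\min(a,b)\le a^{1-\alpha/2}b^{\alpha/2}$ for $a,b\ge 0$ and $\alpha\in(0,2)$, squaring, pulling out the deterministic factor $b=\rho^{-5/2}\rho^{(t-r)/2}B$, and then applying Jensen's inequality in the form $\mathbb{E}^{\theta^*}(G^{2-\alpha})\le(\mathbb{E}^{\theta^*}G^2)^{(2-\alpha)/2}$, I would obtain $\mathbb{E}^{\theta^*}(|\cdots|^2)\lesssim (\mathbb{E}^{\theta^*}Z_t^2)^{(2-\alpha)/2}\,\rho^{-5\alpha/2}\rho^{\alpha(t-r)/2}B^\alpha$, which is \eqref{eq:finoubli}; the factor $\rho^{-5\alpha/2}$ is the price of the boundary terms accumulated when the filter recursion is telescoped back to time $r$. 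Finally, \eqref{eq:finoubli2} follows from \eqref{eq:finoubli} by bounding the emission part of $B$ via Cauchy--Schwarz and Assumption~\eqref{hypa:log2}, since $\int f^*_j\max_i|\log(f^*_i/f_i)|\,\lambda(dy)\le\big(\int f^*_j\max_i\log^2(f^*_i/f_i)\,\lambda(dy)\big)^{1/2}\lesssim \tilde{\epsilon}_n/\sqrt{u_n}$. I expect the main obstacle to be channel~(ii): making precise, and keeping the $\rho$-powers and constants honest in, the statement that the \emph{two} filters inside $Z_t$ (one propagated under $\theta^*$, one under $\theta$) both forget $Y_{1:r-1}$ at the same geometric rate, which demands a careful coupling/restart argument layered on top of the forgetting estimates of Lemma~\ref{lesumQ} rather than a single application of Equation~\eqref{7}.
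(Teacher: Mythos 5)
Your outer architecture coincides with the paper's: the interpolation in $\alpha$ (your $\min(a,b)\le a^{1-\alpha/2}b^{\alpha/2}$ after squaring is the paper's splitting $\lvert x\rvert^2=\lvert x\rvert^{2-\alpha}\lvert x\rvert^{\alpha}$ in Equation \eqref{EC2}), the Jensen step $\mathbb{E}^{\theta^*}(G^{2-\alpha})\lesssim \mathbb{E}^{\theta^*}(Z_t^2)^{(2-\alpha)/2}$ (Equation \eqref{eq:concav}), and the derivation of \eqref{eq:finoubli2} from \eqref{eq:finoubli} by Cauchy--Schwarz and Assumption \eqref{hypa:log2} (Equation \eqref{eq:maxlog}) are all exactly the steps used in the paper, and you correctly identify the budget constant $B$ and the two sources of dependence on the prefix that must be forgotten geometrically.

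The genuine gap is that the decisive estimate --- the deterministic bound $\lvert \mathbb{E}^{\theta^*}(Z_t\mid Y_{1:r-1})-\mathbb{E}^{\theta^*}(Z_t)\rvert\le \rho^{(t-r)/2-5/2}\,B$ --- is asserted (``I would produce a pointwise estimate of the form\ldots'') rather than proved, and you yourself flag it as the main obstacle. This is not a routine consequence of Equation \eqref{7} plus uniform mixing: it is the entire content of the lemma, and it is where the explicit constants $10/\underline{q}$ and $\rho^{-5/2}$ come from. The paper obtains it by introducing the extended Markov chain $L_t=(X_t,Y_t,Q^{\theta,Y_{1:t-1}}_{t,\cdot},Q^{\theta^*,Y_{1:t-1}}_{t,\cdot})$ with kernel $\Pi_\theta$, so that $Z_t=h(L_t)$ for an explicit $h$, writing the difference of conditional expectations as an integral of $\int h\,d\Pi_\theta^{t-r}(l_r,\cdot)-\int h\,d\Pi_\theta^{t-r}(\tilde l_r,\cdot)$ against the two laws of $L_r$ (Equation \eqref{eq:oubli_exp_chaine_etendue}), and then proving a strengthened Proposition 2 of Douc--Matias (Lemma \ref{lem:DoMa}). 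That lemma needs three ingredients your sketch does not supply: a Lipschitz bound $\mathrm{lip}(h,x,y)\le 1/\underline{q}$ in the filter coordinates (Equation \eqref{eq:bornelip}); a growth bound $k(h,x,y)\le \log(1/\underline{q})+\max_i\lvert\log(f^*_i(y)/f_i(y))\rvert$ (Equation \eqref{eq:bornek}), which when integrated against $f^*$ is what keeps $B$ finite --- note this is a first-moment control through the structure of $h$, not a hard cap on the unbounded $Z_t$ as your truncation analogy with \eqref{eq_lemmeeq1} suggests; and a two-block decomposition of $\Pi_\theta^{u}$ at an intermediate time $m=\lfloor(t-r+1)/2\rfloor$ (terms $A$ and $B$ in Equation \eqref{ABCD}), where filter forgetting (Corollary 1 of \cite{Do04}) controls one block and mixing of the true hidden chain controls the other; this splitting is precisely what produces the exponent $(t-r)/2$ and the loss $\rho^{-5/2}$. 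Without this construction, or an equivalent coupling argument carried out in full, the proof is incomplete at its central step.
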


\begin{proof}[Proof of Lemma \ref{le:esp_diffzt}]
Denote
\[
L_t= (X_t, Y_t, Q^{\theta, Y_{1:t-1}}_{t, \cdot},Q^{\theta^*, Y_{1:t-1}}_{t, \cdot}) 
\]
 for all $t \in \mathbb{N}$, then $(L_t)_{t\in \mathbb{N}}$ is the extended Markov chain with transition kernel
 $\Pi_{\theta}$ more precisely described in \citep{DoMa01} at page 384.
Let
\[
h \colon \begin{cases}
\{1, \dots, k\} \times \{y : \exists 1\leq j \leq k, f^*_j(y)>0\} \times \{\mu \in \Delta_k : \mu_i>\underline{q}  ~ \forall i \}^2
\longrightarrow \mathbb{R}\\
l=(x,y,\mu, \mu^*)\longmapsto h(l)=\log\left( \frac{\sum_{i=1}^k \mu^*_i f^*_i(y)}{\sum_{i=1}^k \mu_i f_i(y)} \right)
\end{cases}
\]
then 
\[
h(L_t)=Z_t=\log\left(\frac{p^{\theta^*}(Y_t | Y_{1:t-1})}{p^{\theta}(Y_t | Y_{1:t-1})} \right)
\]
and for all $r\leq t$ and $0< \alpha < 2$,
\begin{equation}\label{EC2}
\begin{split}
\mathbb{E}^{\theta^*} & \big(\lvert \mathbb{E}^{\theta^*}(Z_t |Y_{1:r-1} ) - \mathbb{E}^{\theta^*}(Z_t)\rvert^2\big)\\
& =  \mathbb{E}^{\theta^*}  \big(\lvert \mathbb{E}^{\theta^*}(Z_t |Y_{1:r-1} ) - \mathbb{E}^{\theta^*}(Z_t)\rvert ^{2-\alpha} 
\lvert \mathbb{E}^{\theta^*}(Z_t |Y_{1:r-1} ) - \mathbb{E}^{\theta^*}(Z_t)\rvert ^{\alpha}\big) \\
& \leq 2^{2-\alpha} \mathbb{E}^{\theta^*}\big( 
\left(\max(\mathbb{E}^{\theta^*}(\lvert Z_t \rvert),  \mathbb{E}^{\theta^*}(\lvert Z_t\rvert |Y_{1:r-1} )  )\right)^{2- \alpha}
\lvert \mathbb{E}^{\theta^*}(h(L_t) |Y_{1:r-1} ) - \mathbb{E}^{\theta^*}(h(L_t))\rvert ^\alpha \big).\\
\end{split}
\end{equation}
The following term is geometrically decreasing, using Lemma \ref{lem:DoMa}
\begin{equation}\label{eq:oubli_exp_chaine_etendue}
\begin{split}
&\lvert \mathbb{E}^{\theta^*}(h(L_t) |Y_{1:r-1} ) - \mathbb{E}^{\theta^*}(h(L_t))\rvert \\
&\leq  \int \int \left\lvert \int h(l_t) \Pi_{\theta}^{t-r}(l_r,dl_t)-\int h(l_t) \Pi_{\theta}^{t-r}(\tilde{l}_r,dl_t) \right\rvert P^{\theta}(dl_r |Y_{1:r-1}) 
P^{\theta}(d\tilde{l}_r).
\end{split}
\end{equation}
More precisely, using Equation \eqref{eq:oubli_exp_chaine_etendue}  and Lemma \ref{lem:DoMa} with $m=\lfloor\frac{t-r+1}{2} \rfloor$ and $u=t-r$, we obtain
\begin{equation}\label{eq:esp_h_Lt}
\lvert \mathbb{E}^{\theta^*}(h(L_t) |Y_{1:r-1} ) - \mathbb{E}^{\theta^*}(h(L_t))\rvert 
\leq 
\rho^{\frac{t-r}{2} -\frac{5}{2}} \bigg(2 \max_{1\leq j \leq k} \int f^*_j(y) \max_{1 \leq i \leq k} \left\lvert \log\frac{f^*_i(y)}{f_i(y)} \right\rvert \lambda(dy)
+ \frac{10}{\underline{q}}\bigg).
\end{equation}
Therefore using Equations \eqref{EC2} and \eqref{eq:esp_h_Lt}, 
\begin{equation}\label{eq:fin}
\begin{split}
\mathbb{E}^{\theta^*} & \big(\lvert \mathbb{E}^{\theta^*}(Z_t |Y_{1:r-1} ) - \mathbb{E}^{\theta^*}(Z_t)\rvert^2\big)\\
& \leq 2^{2-\alpha} \mathbb{E}^{\theta^*} \bigg(
   \max(\mathbb{E}^{\theta^*}(\left( \lvert Z_t \rvert),  \mathbb{E}^{\theta^*}(\lvert Z_t\rvert |Y_{1:r-1} ))  \right)^{2- \alpha} \bigg)  \\
& \qquad  \rho^{-\frac{5\alpha}{2}}  \rho^{\frac{\alpha}{2}(t-r)}  \bigg(2\max_{1\leq j \leq k} \int f^*_j(y) \max_{1 \leq i \leq k} \left\lvert \log\frac{f^*_i(y)}{f_i(y)} \right\rvert \lambda(dy)
+ \frac{10}{\underline{q}}\bigg)^\alpha 
\end{split}
\end{equation}
By convexity of the square function and concavity of $x \mapsto x^{\frac{2- \alpha}{2}}$, with $0< \alpha < 2$,
\begin{equation}\label{eq:concav}
\begin{split}
& \mathbb{E}^{\theta^*} \bigg(
   \max(\mathbb{E}^{\theta^*}(\lvert Z_t \rvert),  \mathbb{E}^{\theta^*}(\lvert Z_t\rvert |Y_{1:r-1} )  )^{2- \alpha} \bigg)\\
& \leq  \mathbb{E}^{\theta^*} \bigg(
   \max(\mathbb{E}^{\theta^*}( Z_t^2)^{1/2},  \mathbb{E}^{\theta^*}( Z_t^2 |Y_{1:r-1} ) ^{1/2} )^{2- \alpha} \bigg) \\
& \leq \mathbb{E}^{\theta^*}(Z_t^2)^{\frac{2- \alpha}{2}} + \mathbb{E}^{\theta^*}( \mathbb{E}^{\theta^*}( Z_t^2 |Y_{1:r-1} ) ^{\frac{2- \alpha}{2}})\\
& \leq 2 \mathbb{E}^{\theta^*}(Z_t^2)^{\frac{2- \alpha}{2}}.
\end{split}\end{equation}
Combining Equations \eqref{eq:fin} and \eqref{eq:concav}, we get Equation \eqref{eq:finoubli}.
Besides, using Assumption \eqref{hypa:log2} and Cauchy–Schwarz inequality ,
\begin{equation}\label{eq:maxlog}
\max_{1\leq j \leq k} \int f^*_j(y) \max_{1 \leq i \leq k} \left\lvert \log\frac{f^*_i(y)}{f_i(y)} \right\rvert \lambda(dy)
\leq  \epsilon
\end{equation}
so that  Equation \eqref{eq:finoubli2} holds.

\end{proof}

 Lemma \ref{lem:DoMa}  is an improved version of Proposition 2 of \citep{DoMa01}.
\begin{lemma}\label{lem:DoMa}
For all integers $u>0$, $m<u$, for all $z, \tilde{z} \in \{1, \dots, k\} \times \{y : \exists 1\leq j \leq k, f^*_j(y)>0\} \times \{\mu \in \Delta_k : \mu_i>\underline{q}  ~ \forall i \} \times \{\mu \in \Delta_k : \mu_i>\underline{q}  ~ \forall i \}$ and for all $\theta, \theta^* \in \Delta_k^k(\underline{q})\times \mathcal{F}^k$,
\begin{equation}
\begin{split}
&\left\lvert \int h(l) \Pi_{\theta}^{u}(z,dl)-\int h(l) \Pi_{\theta}^{u}(\tilde{z},dl)\right\rvert\\
& \leq \frac{4}{\underline{q}} \rho^{u-1} +  \frac{4}{\underline{q}} \rho^{m} + 2 \rho^{m-2} \bigg(\max_{1\leq j \leq k} \int f^*_j(y) \max_{1 \leq i \leq k} \left\lvert \log\frac{f^*_i(y)}{f_i(y)} \right\rvert \lambda(dy)
+ \log\left(\frac{1}{\underline{q}}\right)\bigg)\\
\end{split}
\end{equation}
\end{lemma}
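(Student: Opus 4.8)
The statement is an exponential-forgetting estimate for the \emph{unbounded} additive functional $h$ under the extended kernel $\Pi_\theta$ of \cite{DoMa01}, and the whole difficulty comes from the two features that rule out a naive total-variation argument: $h$ is unbounded, and the filter coordinates of the chain $L_t$ never coalesce but only approach one another geometrically. The two scales $m<u$ are precisely the device that decouples these two effects, and the plan is to treat separately the dependence of $\int h\,\Pi_\theta^u(z,\cdot)$ on the \emph{initial filters} of $z$ and on the \emph{initial hidden state} of $z$.

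First I would record a pointwise control of $h$. Splitting
\[
h(x,y,\mu,\mu^*)=\log\frac{\sum_i \mu^*_i f^*_i(y)}{\sum_i \mu_i f^*_i(y)}+\log\frac{\sum_i \mu_i f^*_i(y)}{\sum_i \mu_i f_i(y)},
\]
the first summand is a ratio of two convex combinations of the same numbers $f^*_i(y)$ whose weights are bounded below by $\underline q$, hence lies in $[-\log(1/\underline q),\log(1/\underline q)]$; the second is bounded by $\max_i|\log(f^*_i(y)/f_i(y))|$ by Lemma \ref{le:astuce_frac_somme}. Thus $|h(l)|\le \log(1/\underline q)+\max_i|\log(f^*_i(y)/f_i(y))|$, and differentiating the same expression shows that $h$ is Lipschitz in each filter argument with constant $1/\underline q$, again using $\mu_i,\mu^*_i\ge\underline q$.

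The dependence on the initial filters is the easy, bounded one: keeping the hidden and observation coordinates fixed, the filters carried to time $u$ from two different initial values differ in $\ell_1$-norm by at most $2\rho^{u-1}$ by the forgetting inequality \eqref{7} (Corollary~1 of \cite{Do04}), and multiplying by the $1/\underline q$ Lipschitz constant for each of the two filters produces exactly $\tfrac{4}{\underline q}\rho^{u-1}$. The dependence on the initial hidden state is the hard, unbounded one, and here the intermediate scale enters: I would write $\Pi_\theta^{u}=\Pi_\theta^{m}\Pi_\theta^{u-m}$ and pre-average the tail, $\psi:=\Pi_\theta^{u-m}h$. Since under $\Pi_\theta^{u-m}(l,\cdot)$ the terminal observation is drawn from a mixture $\sum_i\nu_i f^*_i$ of the true emission densities, integrating the pointwise bound on $h$ gives $\|\psi\|_\infty\le B$, where $B=\max_j\int f^*_j(y)\max_i|\log(f^*_i(y)/f_i(y))|\,\lambda(dy)+\log(1/\underline q)$ is exactly the bracket in the claim. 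Applying geometric ergodicity of the hidden chain (the Doeblin condition $Q\in\Delta_k^k(\underline q)$ yields mixing rate $\rho$) to the now \emph{bounded} function $\psi$ over the first $m$ steps forgets the initial state at rate $\rho^{m-2}$, giving $2\rho^{m-2}B$; the attendant discrepancy of the filters fed into $\psi$ at the intermediate time, controlled once more by \eqref{7} and Proposition~1 of \cite{DoMa01} together with the Lipschitz bound, accounts for the remaining $\tfrac{4}{\underline q}\rho^{m}$. Summing the three contributions yields the inequality.

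The step I expect to be the crux is the pre-averaging: without it one is forced to bound $\int h\,d(\Pi_\theta^{u}(z,\cdot)-\Pi_\theta^{u}(\tilde z,\cdot))$ by $\|h\|_\infty$ times a total-variation distance, which fails both because $h$ is unbounded and because the filter marginals of $\Pi_\theta^{u}(z,\cdot)$ and $\Pi_\theta^{u}(\tilde z,\cdot)$ are mutually singular. Replacing $h$ by $\psi=\Pi_\theta^{u-m}h$ converts it into a genuinely bounded function, bounded by the log-moment $B$, to which the ordinary geometric ergodicity of the hidden chain applies; this is exactly the refinement of Proposition~2 of \cite{DoMa01} that the explicit scale $m$ makes possible, and the delicate bookkeeping lies in tracking how the filter discrepancies propagate through $\psi$ to produce the $\rho^{m}$ and $\rho^{u-1}$ powers with the stated constants.
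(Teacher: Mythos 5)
Your preparatory steps coincide with the paper's: the pointwise envelope $\lvert h(x,y,\mu,\mu^*)\rvert \le \log(1/\underline{q})+\max_i\lvert\log(f^*_i(y)/f_i(y))\rvert$ is the paper's bound $k(h,x,y)$ of Equation \eqref{eq:bornek}, the $1/\underline{q}$-Lipschitz property in the filter coordinates is Equation \eqref{eq:bornelip}, the splitting into an ``initial filters/observation'' term and an ``initial hidden state'' term is the decomposition $A+B$ of Equation \eqref{ABCD}, and your treatment of the first term (forgetting inequality \eqref{eq:controleoubli} from \cite{Do04}, once per filter, times the Lipschitz constant) is exactly how the paper obtains $\frac{4}{\underline{q}}\rho^{u-1}$ in Equation \eqref{eq:majo_A}.

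The gap is in the hidden-state term. You pre-average, $\psi=\Pi_\theta^{u-m}h$, and then claim that over the first $m$ steps the mixing of the hidden chain yields $2\rho^{m-2}B$ while ``the discrepancy of the filters fed into $\psi$ at the intermediate time, controlled by \eqref{eq:controleoubli},'' yields $\frac{4}{\underline{q}}\rho^{m}$. This step fails as stated: \eqref{eq:controleoubli} compares two filters driven by the \emph{same} observation sequence from different initial conditions, whereas under the two initializations $x$ and $\tilde{x}$ the observations $Y_{2:m}$ are different random paths, so the two time-$(m+1)$ filters are not comparable by forgetting at all. Any repair must replace the filter arguments by filters started from a fixed reference; but for the mixing step to apply, the resulting integrand must be measurable with respect to the trajectory \emph{after} the mixing time, so the reference filters may only be driven by observations occurring after that time. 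Consequently, inside a single initial window of length $m$ you can never obtain both a $\rho^{m}$ filter term and a $\rho^{m-2}$ mixing term: splitting that window at time $j$ gives at best $\frac{C}{\underline{q}}\rho^{m-j+1}+2B\rho^{j-1}$, i.e.\ $\rho^{m/2}$ after balancing. In your scheme the correct filter error is instead the oscillation of $\psi$ in its filter arguments, which by forgetting across the averaged tail is of order $\rho^{u-m}/\underline{q}$, giving a bound of the form $\frac{C}{\underline{q}}\rho^{u-m}+2B\rho^{m}$ --- the two exponents swap roles relative to the statement. The paper gets the stated exponents by putting the two windows at \emph{opposite ends} of the horizon, not both at the start: in Equation \eqref{eq:majB} the reference-filter replacement is performed inside $h$ at the terminal time $u+1$, driving the reference filters with the last $m$ observations $y_{u-m+1:u}$ (pathwise error $\frac{4}{\underline{q}}\rho^{m}$ by \eqref{eq:controleoubli}), after which the modified function depends only on the trajectory beyond time $u+1-m\ge m$, and the hidden-chain mixing $\sum_{x_m}\lvert {Q^*}^{m-1}_{x,x_m}-{Q^*}^{m-1}_{\tilde{x},x_m}\rvert\le 2\rho^{m-1}$ is applied at time $m$ against the envelope $k(h,\cdot,\cdot)$, whose expectation produces the bracket. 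Your repaired bound $\frac{C}{\underline{q}}\rho^{u-m}+2B\rho^{m}$ would still suffice for the single use of this lemma (Lemma \ref{le:esp_diffzt} takes $m=\lfloor(t-r+1)/2\rfloor$, $u=t-r$, where $\rho^{u-m}$ and $\rho^{m-1}$ are comparable), but it does not prove the inequality as stated.
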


\begin{proof}[Proof of Lemma \ref{lem:DoMa}]

This lemma is an improved version of Proposition 2 of \cite{DoMa01}. We improve the result by defining $h$ on 
\[
 \mathcal{Z}= \{1, \dots, k\} \times \{y : \exists 1\leq j \leq k, f^*_j(y)>0\} \times \{\mu \in \Delta_k : \mu_i>\underline{q}  ~ \forall i \} \times \{\mu \in \Delta_k : \mu_i>\underline{q}  ~ \forall i \}
 \]
  and using that if  $z\in \mathcal{Z}$ then $\Pi_{\theta^*}(z, \mathcal{Z} )=1$.
Then we obtain
\begin{equation}\label{eq:bornelip}
\text{lip}(h,x,y)=\frac{1}{\underline{q}}
\end{equation}
since  for all $(x,y,\mu, \mu^*), (x,y,\tilde{\mu}, \tilde{\mu}^*) \in \mathcal{Z}$
\begin{equation*}
\begin{split}
 \lvert h(x,y,\mu, \mu^*) - h(x,y,\tilde{\mu}, \tilde{\mu}^*) \rvert 
  & =\left\lvert  \log\left(  \frac{\sum_{i=1}^k \mu^*_i f^*_i(y)}{\sum_{i=1}^k \mu_i f_i(y)} \right)  
   - \log\left(  \frac{\sum_{i=1}^k \tilde{\mu}^*_i f^*_i(y)}{\sum_{i=1}^k \tilde{\mu}_i f_i(y)} \right)       \right\rvert \\
& =\left\lvert  \log\left(  \frac{\sum_{i=1}^k \mu^*_i f^*_i(y)}{\sum_{i=1}^k \tilde{\mu}^*_i f^*_i(y)} \right)  
   - \log\left(  \frac{\sum_{i=1}^k \mu_i f_i(y)}{\sum_{i=1}^k \tilde{\mu}_i f_i(y)} \right)       \right\rvert \\
 &  \leq \max_{ 1\leq i \leq k}  \left\lvert  \log\left( \frac{\mu^*_i}{\tilde{\mu}^*_i} \right)  \right\rvert   
   + \max_{ 1\leq i \leq k}   \left\lvert \log\left( \frac{\mu_i}{\tilde{\mu}_i} \right)  \right\rvert\\
& \leq \frac{1}{\underline{q}}\left( \sum_{i=1}^k \lvert \mu^*_i - \tilde{\mu}^*_i \rvert   + \sum_{i=1}^k \lvert \mu_i - \tilde{\mu}_i \rvert \right)
 \end{split}
 \end{equation*}
 using . Moreover
\begin{equation}\label{eq:bornek}
k(h,x,y)=  \log\frac{1}{\underline{q}} + \max_{1 \leq i \leq k} \left\lvert  \log\left(  \frac{  f^*_i(y)}{f_i(y)} \right)  \right\rvert 
\end{equation}
 because, using Lemma \ref{le:astuce_frac_somme},
\begin{equation*}
\begin{split}
 \lvert h(x,y,\mu, \mu^*) \rvert 
   &=\left\lvert  \log\left(  \frac{\sum_{i=1}^k \mu^*_i f^*_i(y)}{\sum_{i=1}^k \mu_i f_i(y)} \right)  \right\rvert \\
& \leq  \max_{1 \leq i \leq k} \left\lvert  \log\left(  \frac{ \mu^*_i f^*_i(y)}{ \mu_i f_i(y)} \right)  \right\rvert \\
 &\leq  \max_{1 \leq i \leq k} \left\lvert  \log\left(  \frac{ \mu^*_i }{ \mu_i } \right)  \right\rvert + 
\max_{1 \leq i \leq k} \left\lvert  \log\left(  \frac{  f^*_i(y)}{f_i(y)} \right)  \right\rvert  \\
& \leq \log\frac{1}{\underline{q}} + \max_{1 \leq i \leq k} \left\lvert  \log\left(  \frac{  f^*_i(y)}{f_i(y)} \right)  \right\rvert.
 \end{split}
 \end{equation*}
Moreover instead of using Proposition 1 of \citep{DoMa01} we use Corollary 1 of \cite{Do04} so that for all $\theta \in \Delta_k^k(\underline{q}) \times \mathcal{F}^k$, $\mu, \tilde{\mu} \in \Delta_k(\underline{q})$ and for all $y_{1:l-1} \in \{y ~ : ~ \exists i,~ f^*_i(y)>0  \}^{l-1}$
\begin{equation}\label{eq:controleoubli}
\sum_{i=1}^k \left\lvert  Q^{\mu,\theta,y_{1:l-1}}_{i,l} - Q^{\tilde{\mu},\theta,y_{1:l-1}}_{i,l} \right\rvert
\leq 2 \rho^{l-1}
.\end{equation}

Then, let
$z=(x,y,\mu, \mu^*) \in \mathcal{Z}$ and $\tilde{z}=(\tilde{x}, \tilde{y}, \tilde{\mu}, \tilde{\mu}^*) \in \mathcal{Z}$ using the proof of Proposition 1 of \citep{DoMa01}:

\begin{equation}\label{ABCD}
\begin{split}
&\lvert \int h(l) \Pi^{u}(z,dl)-\int h(l) \Pi^{u}(\tilde{z},dl)\rvert\\
&\leq \underbrace{\left\lvert \int h(l) \Pi^{u}((x,y,\mu, \mu^*),dl)-\int h(l) \Pi^{u}((x, \tilde{y}, \tilde{\mu}, \tilde{\mu}^*),dl)\right\rvert}_{=A}\\
& \quad +\underbrace{\left\lvert \int h(l) \Pi^{u}(x, \tilde{y}, \tilde{\mu}, \tilde{\mu}^*),dl)-\int h(l) \Pi^{u}((\tilde{x}, \tilde{y}, \tilde{\mu}, \tilde{\mu}^*),dl)\right\rvert}_{=B}
\end{split}
\end{equation}
where
\begin{equation}\label{eq:majA}
\begin{split}
&\lvert A \rvert
 \leq \bigg\lvert\sum_{x_{2:u+1}=1}^k \int \text{lip}(h,x_{u+1},y_{u+1}) \\
& \left( \sum_{i=1}^k \lvert Q_{u,i}^{Q_1^{\mu,\theta,y},\theta,y_{2:u}} -
 Q_{u,i}^{Q_1^{\tilde{\mu},\theta,\tilde{y}},\theta,y_{2:u}} \rvert 
+ \sum_{i=1}^k \lvert 
Q_{u,i}^{Q_1^{\mu^*,\theta^*,y},\theta^*,y_{2:u}}
 -  Q_{u,i}^{Q_1^{\tilde{\mu}^*,\theta^*,\tilde{y}},\theta^*,y_{2:u}}
 \rvert  \right) \\
& \qquad Q^*_{x,x_2}\dots Q^*_{x_{u},x_{u+1}} f^*_{x_2}(y_2) \dots f^*_{x_{u+1}}(y_{u+1})\lambda(dy_{2}) \dots \lambda(dy_{u+1}) \bigg\rvert  
\end{split}
\end{equation}
and for any $ 1\leq m \leq u$,
\begin{equation}\label{eq:majB}
\begin{split}
&\lvert  B \rvert
\leq  \sum_{x_{2:u+1}=1}^k \int \text{lip}(h,x_{u+1},y_{u+1})
\bigg( \sum_{i=1}^k \lvert  Q_{m+1,i}^{Q_{u+1-m}^{\tilde{\mu}, \theta, \tilde{y},y_{2:u-m}}, \theta, y_{u-m+1:u}}  - Q_{m+1,i}^{\nu, \theta, y_{u-m+1:u}} \rvert \\
& \qquad +  \sum_{i=1}^k \lvert Q_{m+1,i}^{Q_{u+1-m}^{\tilde{\mu}, \theta^*, \tilde{y},y_{2:u-m}}, \theta^*, y_{u-m+1:u}}
-Q_{m+1,i}^{\nu^*, \theta^*, y_{u-m+1:u}} \rvert \bigg)\\
& \qquad \big\lvert Q^*_{x,x_2}- Q^*_{\tilde{x},x_2}\big\rvert Q^*_{x_2,x_3}\dots Q^*_{x_{u},x_{u+1}} f^*_{x_2}(y_2) \dots f^*_{x_{u+1}}(y_{u+1})\lambda(dy_{2:u+1}) \\
& \qquad + \sum_{x_{m:u+1}=1}^k \int k(h,x_{u+1}, y_{u+1})  \\
& \qquad \lvert {Q^*}^{m-1}_{x,x_m} - {Q^*}^{m-1}_{\tilde{x},x_m}  \rvert  Q^*_{x_m,x_{m+1}}\dots Q^*_{x_{u},x_{u+1}} f^*_{x_m}(y_m) \dots f^*_{x_{u+1}}(y_{u+1})\lambda(dy_{2:u+1})
.\end{split}
\end{equation}

Combining Equations \eqref{eq:bornelip},  \eqref{eq:controleoubli} and \eqref{eq:majA},
\begin{equation}\label{eq:majo_A}
\lvert A \rvert \leq   \frac{4}{\underline{q}} \rho^{u-1} 
\end{equation}
and using Equations  \eqref{eq:bornelip}, \eqref{eq:bornek}, \eqref{eq:controleoubli} and \eqref{eq:majB}
\begin{equation}\label{eq:majoB}
 \lvert B \rvert  \leq \frac{4}{\underline{q}} \rho^{m} + 2 \rho^{m-2} \bigg(\max_{1\leq j \leq k} \int f^*_j(y) \max_{1 \leq i \leq k} \left\lvert \log\frac{f^*_i(y)}{f_i(y)} \right\rvert \lambda(dy)
+ \log\left(\frac{1}{\underline{q}}\right)\bigg)  
\end{equation}
therefore using Equations \eqref{ABCD}, \eqref{eq:majo_A} and \eqref{eq:majoB} we obtain

\begin{equation}
\begin{split}
&\left\lvert \int h(l) \Pi^{u}(z,dl)-\int h(l) \Pi^{u}(\tilde{z},dl)\right\rvert\\
& \leq \frac{4}{\underline{q}} \rho^{u-1} +  \frac{4}{\underline{q}} \rho^{m} + 2 \rho^{m-2} \bigg(\max_{1\leq j \leq k} \int f^*_j(y) \max_{1 \leq i \leq k} \left\lvert \log\frac{f^*_i(y)}{f_i(y)} \right\rvert \lambda(dy)
+ \log\left(\frac{1}{\underline{q}}\right)\bigg).\\
\end{split}
\end{equation}

\end{proof}


\section{Proof of Theorem \ref{th:epsilon_discret} (discrete observations)}\label{ap:discret}

Assumption \eqref{hyp:Fn} will be checked using 
Proposition 2 of \citep{ShToGh13} that we recall here.
\begin{lemma}\label{le:b_c_discret}[Proposition 2 of \citep{ShToGh13}]
Let $H$ be a positive integer, $A$ and $\epsilon$ be positive, denote
\[\mathcal{H}_{H,A,\epsilon}=
  \left\{ f = \sum_{h=1}^{+\infty} \pi_h \delta_{z_h} ~ : ~
   \sum_{h>H} \pi_h  < \epsilon,
   z_h \in [0,A], h \leq H   
    ~  \right\}^k.
    \]
    Then
    \begin{equation}
    DP( G)\left( \mathcal{H}_{H,A,\epsilon} ^c\right)
    \leq k H G((A, +\infty)) + k \left( \frac{e G(\mathbb{N})}{H} \log \frac{1}{\epsilon} \right)^H
    \end{equation}
    \begin{equation}
    N( 4 l\epsilon , \mathcal{H}_{H,A,\epsilon} , d ) \lesssim  (A+1)^{kH} \epsilon^{-kH}.
    \end{equation}
\end{lemma}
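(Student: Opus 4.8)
The plan is to use the stick-breaking representation of the Dirichlet process and to treat the two displayed bounds separately, in each case reducing to a single coordinate since $\mathcal{H}_{H,A,\epsilon}=(\mathcal{H}^{(1)})^k$ is a product set and $DP(G)^{\otimes k}$ a product measure, where $\mathcal{H}^{(1)}=\{f=\sum_{h\ge1}\pi_h\delta_{z_h}:\sum_{h>H}\pi_h<\epsilon,\ z_h\in[0,A]\text{ for }h\le H\}$. Recall that a draw $f\sim DP(G)$ can be written $f=\sum_{h\ge1}\pi_h\delta_{z_h}$ with locations $z_h\overset{\text{iid}}{\sim}\bar G:=G/G(\mathbb N)$ and weights $\pi_h=V_h\prod_{l<h}(1-V_l)$, where $V_l\overset{\text{iid}}{\sim}\mathrm{Beta}(1,G(\mathbb N))$. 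For the complement bound I would use $1-x^k\le k(1-x)$ to get $DP(G)(\mathcal{H}_{H,A,\epsilon}^c)\le k\,DP(G)((\mathcal{H}^{(1)})^c)$, and then split the single-coordinate complement into the two bad events: some early location falls outside $[0,A]$, or the residual mass is at least $\epsilon$. A union bound gives $P(\exists h\le H:\ z_h>A)\le H\,\bar G((A,\infty))=H\,G((A,\infty))/G(\mathbb N)$, which produces the first term (up to the harmless normalization by $G(\mathbb N)$).

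The key computation is the residual-mass tail. Here I would exploit the identity $\sum_{h>H}\pi_h=\prod_{l=1}^H(1-V_l)$, which follows from $\sum_{h\le H}\pi_h=1-\prod_{l\le H}(1-V_l)$. Since $G(\mathbb N)\,(-\log(1-V_l))$ are i.i.d.\ standard exponentials, $-G(\mathbb N)\log\prod_{l=1}^H(1-V_l)$ follows a $\mathrm{Gamma}(H,1)$ law, so that
\[
P\Big(\sum_{h>H}\pi_h\ge\epsilon\Big)=P\big(\mathrm{Gamma}(H,1)\le G(\mathbb N)\log(1/\epsilon)\big)\le\frac{\big(G(\mathbb N)\log(1/\epsilon)\big)^H}{H!}\le\Big(\frac{e\,G(\mathbb N)}{H}\log\frac1\epsilon\Big)^H,
\]
using $H!\ge(H/e)^H$. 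Adding the two contributions and the factor $k$ gives the first displayed inequality.

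For the covering number I would first use that $d$ is the maximum over coordinates, so $N(4\ell\epsilon,\mathcal{H}_{H,A,\epsilon},d)\le N(4\ell\epsilon,\mathcal{H}^{(1)},\|\cdot\|_{L_1})^k$, reducing the task to building an $L_1$-net of one coordinate. Given $f=\sum_{h\ge1}\pi_h\delta_{z_h}\in\mathcal{H}^{(1)}$, truncation to the first $H$ atoms costs $\|f-\sum_{h\le H}\pi_h\delta_{z_h}\|_{L_1}=\sum_{h>H}\pi_h<\epsilon$. Because the observations live in $\mathbb N$, the locations $z_1,\dots,z_H$ take values in $\{0,\dots,\lfloor A\rfloor\}$, so there are at most $(A+1)^H$ possible location tuples and no discretization of the supports is required. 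For a fixed support, two atomic measures differ in $L_1$ by exactly the $\ell_1$ distance of their weight vectors, so it suffices to cover the weight simplex in $\ell_1$ by a net of mesh $\lesssim\epsilon$, which has cardinality $\lesssim\epsilon^{-H}$. Each $f$ then lies within $3\epsilon\le4\ell\epsilon$ (since $\ell\ge1$) of a net point, the net has at most $\lesssim(A+1)^H\epsilon^{-H}$ points, and raising to the power $k$ yields the second bound.

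I expect the main obstacle to be the residual-mass estimate: correctly identifying the distribution of $\sum_{h>H}\pi_h$ through the $\mathrm{Beta}$--$\mathrm{Gamma}$ algebra and converting the resulting gamma lower-tail probability into the clean factorial bound $(e\,G(\mathbb N)H^{-1}\log(1/\epsilon))^H$. The covering part is then routine, the one point worth emphasizing being that on $\mathbb N$ the atom locations are automatically discrete, which is precisely what collapses the usual continuous location grid into the factor $(A+1)^H$.
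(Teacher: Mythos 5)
Your proposal is correct, and it supplies something the paper itself does not contain: the paper gives no proof of Lemma \ref{le:b_c_discret} at all, but simply recalls it as Proposition 2 of \citep{ShToGh13}. Since that source states its sieve result in the setting of Dirichlet location mixtures of Gaussians on $\mathbb{R}^d$, the discrete analogue used here (raw Dirichlet draws on $\mathbb{N}$, $L_1$ with respect to counting measure) is really an adaptation, and your self-contained argument is the natural way to justify it. Your route is exactly the standard one behind such sieve bounds: Sethuraman's stick-breaking representation, the union bound $1-x^k\le k(1-x)$ over the $k$ coordinates, the identity $\sum_{h>H}\pi_h=\prod_{l\le H}(1-V_l)$ together with the fact that $G(\mathbb{N})\,(-\log(1-V_l))$ are i.i.d.\ standard exponentials, so the residual-mass event is a $\mathrm{Gamma}(H,1)$ lower tail bounded by $x^H/H!\le (ex/H)^H$; and, for the entropy, truncation plus an $\ell_1$-net of the weight simplex. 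Your key structural observation, that on $\mathbb{N}$ the atom locations in $[0,A]$ are automatically confined to the finite set $\{0,\dots,\lfloor A\rfloor\}$, is not merely a convenience: a location grid could not work here, since Dirac masses at distinct integers are at $L_1$-distance $2$, so the integer structure is precisely what makes the discrete statement true.

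Two caveats, neither fatal. First, the stick-breaking atoms are i.i.d.\ from the normalized measure $\bar{G}=G/G(\mathbb{N})$, so your union bound yields $kH\,G((A,+\infty))/G(\mathbb{N})$; this coincides with the displayed bound only when $G(\mathbb{N})\ge 1$ (and the displayed inequality can actually fail as literally stated when $G(\mathbb{N})$ is small), so the first term should be read with $\bar{G}$ in place of $G$ -- a fixed multiplicative constant that is harmless where the lemma is used. Second, any $\ell_1$-net of the $H$-dimensional weight simplex has cardinality of order $(C/\epsilon)^H$ for some universal $C>1$, not $\epsilon^{-H}$, so your construction really gives $N\lesssim C^{kH}(A+1)^{kH}\epsilon^{-kH}$; the factor $C^{kH}$ cannot be absorbed into a ``$\lesssim$'' whose constant is independent of $H$. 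This looseness is inherited from the statement as recalled in the paper, and it is immaterial in context, because only $\log N \lesssim kH\left(\log(A+1)+\log(1/\epsilon)\right)$ enters the verification of Assumption \eqref{hyp:Fn} in the proof of Theorem \ref{th:epsilon_discret}.
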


We now give the proof of Theorem \ref{th:epsilon_discret}.

\begin{proof}[Proof of Theorem \ref{th:epsilon_discret}]
We first prove Assumption \eqref{hyp:KL} with $\tilde{f}_j=f_j $, for all $1 \leq j \leq k$ using Lemma \ref{prop:discret}  with 
$\epsilon = \tilde{\epsilon}_n^2/u_n$, 
$L=L_n=\left( -\log \left( \tilde{\epsilon}_n^2/(u_n \log \log n)\right)/(c-\delta) \right)^{1/m}$
 and $S_L=\{1, \dots , L\}$.
Using that $f^*_i\in \mathcal{D}(m,c,K)$ for all $1 \leq i \leq k$ and Assumption  \ref{eq:queue_G_pol}, we get
\begin{equation*}
\begin{split}
\sum_{l>L_n} \frac{f^*_i(l)}{(G(l))^2}
&\lesssim 
\sum_{l>L_n} \exp(-cl^m) l^{2\alpha}
\lesssim
\sum_{l>L_n} \exp(-(c-\delta)l^m) l^{m-1} \\
&\lesssim
\int_{L_n}^\infty \exp(-(c-\delta)x^m) x^{m-1} \lambda(dx)
\lesssim
 \exp(-(c-\delta)L_n^m)
 \lesssim
 \tilde{\epsilon}^{2}_n
 \end{split}
\end{equation*}
which proves Equation \eqref{eq:disf_g}. Equation \eqref{eq:disf} is proved similarly.
Equation \eqref{eq:dislog2} follows using Assumption \ref{hyp:dis_exp_fifj}.
Then, we can apply Lemma \ref{prop:discret} so that
\begin{equation}\label{eq:minoration_KL_discret}
\begin{split}
&DP(G)^{\otimes k} 
\bigg(f_j ~ : \forall 1 \leq i,j \leq k  ~ \sum_{l=1}^{+\infty} f^*_i(l)\log^2\left(\frac{f^*_j(l)}{f_j(l)} \right) \leq \frac{\epsilon_n^2}{u_n}, \\
 &
  \sum_{l=1}^{L_n}  \frac{\left(f^*_j(l)-f_j(l) \right)^2}{f^*_j(l)} \leq \frac{\epsilon_n^2}{u_n},    
   \sum_{l>L_n}f_j(l) \leq \frac{\epsilon_n^2}{u_n},   
  \sum_{l=1}^{L_n} \frac{\left( f^*_j(l) - f_j(l) \right)^2}{f_j(l)}  \leq \frac{\epsilon_n^2}{u_n} \bigg) \\
   & \gtrsim
   \prod_{j=1}^k \Bigg(  \left( \frac{\epsilon_n^2}{4 u_n}  \right)^{(L_n-1+G(\mathbb{N}))/2}     	f^*_j(l^*)^{L_n-2} \left( \frac{1}{3} \right)^{L_n}
   \prod_{l=1}^{L_n} G(l) f^*_j(l)^{G(l)}   \Bigg) .
 \end{split}
  \end{equation}
Moreover using that $f^*_i\in \mathcal{D}(m,c,K)$ for all $1 \leq i \leq k$ and Equation \ref{eq:queue_G_pol}, we obtain
\begin{equation}\label{eq:mino_prod_G_L}
\log \prod_{l=1}^{L_n} G(l)
\gtrsim   -L_n \log (L_n),
\end{equation}
\begin{equation}\label{eq:mino_prod_f_G}
\log \left(\prod_{l=1}^{L_n} f^*_j(l)^{G(l)}\right)
\gtrsim
 \sum_{l=1}^{L_n} G(l) \log f^*_j(l)
 \gtrsim L_n^{K}.
\end{equation} 
 Combining Equations \eqref{eq:minoration_KL_discret} with $l^*=\text{argmax}_{l}(\min_{1 \leq j \leq k} f^*_j(l))$, \eqref{eq:mino_prod_G_L} and \eqref{eq:mino_prod_f_G}, Assumption \eqref{hyp:KL} of Theorem \ref{th1} is true if 
 \[
  \left(- \log\left(\tilde{\epsilon}_n\right) \right)^{\max(1/m+1,K/m)} \lesssim n \tilde{\epsilon}_n^2.
 \]
  Then we choose
 \[
 \tilde{\epsilon}_n = \frac{1}{\sqrt{n}} (\log n)^{t_0} 
 \]
  with $2 t_0 > \max(1/m+1,K/m)$ and Assumption \eqref{hyp:KL} holds.
 
 Using Assumption \ref{hyp:Q0}, for $\tilde{\epsilon}_n$ small enough,
\begin{equation}\label{eq:mino_piQ}
\Pi_Q\left(\left\{Q : \lVert Q - Q^* \rVert \leq \frac{\tilde{\epsilon}_n}{\sqrt{u_n}} \right\}\right)
\geq
\frac{\pi(Q^*)}{2} \lambda\left(\left\{Q : \lVert Q - Q^* \rVert \leq \frac{\tilde{\epsilon}_n}{\sqrt{u_n}} \right\}\right)
\gtrsim \left(\frac{\tilde{\epsilon}_n}{\sqrt{u_n}}\right)^{k(k-1)}
\end{equation} 
so that Assumption \eqref{hyp:voisQ*} holds.
 
 Using Lemma \ref{le:b_c_discret} with  $A=\exp((\log n)^{2t_0+t/2})$, 
 $H=(n \epsilon_n^2)/((\log n)^{2t_0+t})$ and $\epsilon_n=(\log n)^t /\sqrt{n}$,
 \begin{equation}
 \begin{split}
 &\Pi(\mathcal{F}_n^c) \\
  &
 \lesssim
	(\log n)^{t-2t_0} \exp \left( -(\alpha+1) (\log n)^{2t_0+t/2} \right)
	  + \exp \left(  -(t-2t_0-1)(\log n)^{t-2t_0} (\log \log n)^2 \right)
  \\
&=o(\exp(-C' (\log n)^{2t_0}))
 =o(\exp(-C' n\tilde{\epsilon}_n^2)),
 \end{split}
 \end{equation}
  if $t>4t_0$. Moreover
 \begin{equation}
 \log \left(N\left( \frac{\epsilon_n}{12}, \mathcal{F}_n, D_\ell \right) \right)
 \lesssim (\log n)^{3t/2} + (\log n)^{t-2t_0 +1}
 \lesssim n\epsilon_n^2
 \end{equation}
 so that Assumption \eqref{hyp:Fn} holds.
This concludes the proof of Theorem \ref{th:epsilon_discret}.

\end{proof}

\begin{lemma}\label{prop:discret}
Let $S_L$ be a subset of $\{1, \dots, L\}$.
If
\begin{equation}\label{eq:dislog2}
\max_{1 \leq i,j \leq k} \sum_{S_L^c} f^*_i(l) \log^2(f^*_j(l)) \leq \frac{\epsilon}{8}
\end{equation}
 and 
\begin{equation}\label{eq:disf_g}
\max_{1 \leq i \leq k} \sum_{S_L^c} \frac{f^*_i(l)}{(G(l))^2} \leq \frac{\epsilon}{384 \log^2(2) k(G(\mathbb{N}))^2 } ,
\end{equation}
and if there exists $\delta>0$ such that
\begin{equation}\label{eq:disf}
\max_{1 \leq i \leq k} \sum_{l\in S_L^c} f^*_i(l) \leq \epsilon^{1+\delta} ,
\end{equation}
then,  for all $l^* \in S_L$ and all $\epsilon>0$ small enough,
\begin{equation*}
\begin{split}
 P_G&:=DP(G)^{\otimes k} 
\bigg(f_j ~ : 
\forall 1 \leq i,j \leq k  ~ \sum_{l=1}^{+\infty} f^*_i(l)\log^2\left(\frac{f^*_j(l)}{f_j(l)} \right) \leq \epsilon, \\
 & \hspace{1cm} \sum_{l\in S_L}  \frac{\left(f^*_j(l)-f_j(l) \right)^2}{f^*_j(l)} \leq \epsilon,    
   \sum_{l \in S_L^c} f_j(l) \leq \epsilon,   
  \sum_{l \in S_L} \frac{\left( f^*_j(l) - f_j(l) \right)^2}{f_j(l)}  \leq \epsilon \bigg) \\
&\gtrsim \prod_{j=1}^k \Bigg(  \left( \sqrt{\frac{\epsilon}{4}}  \right)^{L-1+G(\mathbb{N})}   
 \left( \frac{1}{3} \right)^{L}
  f^*_j(l^*)^{L-2} 
\left[ \prod_{l\in S_L} G(l) f^*_j(l)^{G(l)} \right]
  \Bigg).
\end{split}
\end{equation*}

\end{lemma}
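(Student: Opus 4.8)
The plan is to use the product structure of the prior and the finite-dimensional conjugacy of the Dirichlet process. Since $\Pi_f^{(k)}=DP(G)^{\otimes k}$ and, for each fixed $j$, all four constraints defining $P_G$ involve only the single component $f_j$ (the $f^*_i$ being fixed, the first constraint reads $\max_{1\le i\le k}\sum_l f^*_i(l)\log^2(f^*_j(l)/f_j(l))\le\epsilon$), the event factorises over $j$ and $P_G=\prod_{j=1}^k P_G^{(j)}$ for independent draws $f\sim DP(G)$. It therefore suffices to establish, for one such draw, the per-$j$ lower bound appearing inside the product. I would work throughout with the partition $(S_L,S_L^c)$ and its singletons inside $S_L$, recalling Ferguson's agglomeration property: the vector $((f(l))_{l\in S_L},f(S_L^c))$ is $\mathrm{Dir}\big((G(l))_{l\in S_L},G(S_L^c)\big)$, each marginal $f(l)$ is $\mathrm{Beta}(G(l),G(\mathbb{N})-G(l))$, and the renormalised restriction $\hat f=f|_{S_L^c}/f(S_L^c)$ is an independent $DP(G|_{S_L^c})$.

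Next I would replace the target event by a smaller, more tractable event $E_j$, defined as the intersection of a multiplicative box $f(l)\in f^*_j(l)\,[\,1-a\sqrt\epsilon,\,1+a\sqrt\epsilon\,]$ for $l\in S_L$, a two-sided control $\epsilon^{1+\delta/2}\le f(S_L^c)\le\epsilon$ of the complementary mass, and the tail bound $\sum_{S_L^c}f^*_i(l)\log^2\hat f(l)\le C\epsilon$ for all $i$. On $E_j$ the four constraints follow: the two quadratic constraints on $S_L$ and the first constraint restricted to $S_L$ are immediate from the box (there $\log^2(f^*_j/f_j)=O(\epsilon)$), while $\sum_{S_L^c}f_j=f(S_L^c)\le\epsilon$ gives the third. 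For the first constraint on $S_L^c$ I would split $\log^2(f^*_j/f_j)\le 2\log^2 f^*_j+2\log^2 f_j$: the first piece is controlled by Assumption \eqref{eq:dislog2}, and for the second I write $\log^2 f_j(l)\le 2\log^2 f(S_L^c)+2\log^2\hat f(l)$, so that $\sum_{S_L^c}f^*_i\log^2 f_j\le 2\log^2 f(S_L^c)\sum_{S_L^c}f^*_i+2\sum_{S_L^c}f^*_i\log^2\hat f$; the first summand is $\lesssim \epsilon^{1+\delta}\log^2\epsilon\le\epsilon$ by Assumption \eqref{eq:disf} and the lower bound on $f(S_L^c)$, and the second is $\le C\epsilon$ by the definition of $E_j$.

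The main obstacle is exactly the tail term $\sum_{S_L^c}f^*_i(l)\log^2\hat f(l)$, since it involves the individual, very small, tail values of the random density. I would control it in expectation through the Beta marginal of the Dirichlet process: for $\hat f\sim DP(G|_{S_L^c})$ one has $\mathbb{E}[\log^2\hat f(l)]\lesssim 1/G(l)^2$ when $G(l)$ is small (the standard small-shape asymptotics of the log of a Beta variable), whence $\mathbb{E}\big(\sum_{S_L^c}f^*_i(l)\log^2\hat f(l)\big)\lesssim\sum_{S_L^c}f^*_i(l)/G(l)^2\lesssim\epsilon$ by Assumption \eqref{eq:disf_g}. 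Markov's inequality then makes the tail bound in $E_j$ hold with probability bounded below by a fixed constant. Crucially, this last event depends only on $\hat f$, hence is independent of the box and the complementary-mass events, which depend only on the Dirichlet vector $((f(l))_{l\in S_L},f(S_L^c))$.

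Finally I would lower bound $P(E_j)$ as a product of two factors by this independence; the constant tail factor is absorbed into $\gtrsim$. The remaining factor is the $\mathrm{Dir}\big((G(l))_{l\in S_L},G(S_L^c)\big)$ probability of the box intersected with $\{\epsilon^{1+\delta/2}\le f(S_L^c)\le\epsilon\}$, which is non-empty since $\sum_{S_L}f^*_j(l)\ge 1-\epsilon^{1+\delta}$ by Assumption \eqref{eq:disf}. I would bound the Dirichlet density from below on this region, using $1/\Gamma(G(l))\gtrsim G(l)$ (from $\Gamma(a)\sim 1/a$ as $a\to0$) to generate $\prod_{l\in S_L}G(l)$, evaluating $\prod_l x_l^{G(l)-1}$ near $f^*_j$, integrating over the box in the free coordinates while leaving the coordinate $l^*$ fixed by the simplex constraint, and integrating $x_0^{G(S_L^c)-1}$ over $[\epsilon^{1+\delta/2},\epsilon]$. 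This produces $\prod_{l\in S_L}G(l)f^*_j(l)^{G(l)}$ together with powers of $\sqrt\epsilon$; bounding $|S_L|\le L$, $G(S_L^c)\le G(\mathbb{N})$ and the bounded per-coordinate correction factors conservatively yields the exponent $L-1+G(\mathbb{N})$ and the constants $(1/3)^L$ and $f^*_j(l^*)^{L-2}$. This final bookkeeping of constants is routine once the density lower bound and the box volume are in hand.
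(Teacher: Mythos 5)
Your proposal is correct and follows essentially the same route as the paper's own proof: the tail-free factorisation of each $DP(G)$ draw into the $S_L$-Dirichlet vector, the complementary mass $f(S_L^c)$, and the independent renormalised tail $\hat f\sim DP(G|_{S_L^c})$; the same splitting of the $\log^2$ constraint on $S_L^c$ into a $\log^2 f^*_j$ part, a $\log^2 f(S_L^c)$ part and a $\log^2\hat f$ part; the same control of the tail term by the Beta $\log^2$-moment bound $\lesssim 1/G(l)^2$ plus Markov's inequality (the paper's Equations \eqref{eq:dp_esp}--\eqref{eqmoche2}); and the same explicit Dirichlet density lower bound on the multiplicative box, with $\Gamma(a)\leq 2/a$ generating $\prod_{l\in S_L}G(l)$. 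The one step you dismiss as routine bookkeeping is in fact where the factor $f^*_j(l^*)^{L-2}$ originates: the boxes for the free coordinates, and the window for $f(S_L^c)$, must each be shrunk by the factor $f^*_j(l^*)$ (the paper's Equations \eqref{eq:borne_a} and \eqref{eq:borne_aautre}) so that the coordinate $x_{l^*}$ eliminated by the simplex constraint is forced into its own box --- if the free coordinates range over full-width boxes, $x_{l^*}$ can deviate by order $\sqrt\epsilon$ rather than $\sqrt\epsilon\, f^*_j(l^*)$, and the region you integrate over is then not contained in the target event.
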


\begin{proof}[Proof of Lemma \ref{prop:discret}]

Note that if for all 
$ l \in S_L$
 and for all 
 $1\leq j \leq k$,
\[\left(1-\sqrt{\frac{\epsilon}{4}}\right)f^*_j(l) \leq f_j(l) \leq \left(1+\sqrt{\frac{\epsilon}{4}}\right) f^*_j(l)\]
then for all
$ l \in S_L$,
\[
\log^2 \left(\frac{f^*_j(l)}{f_j(l)}\right) \leq \frac{\epsilon}{2} ,
\quad \frac{\lvert f^*_j(l)-f_j(l) \rvert ^2}{f^*_j(l)^2} \leq \frac{\epsilon}{2k} \quad \text{ and }
\quad \frac{\lvert f^*_j(l)-f_j(l) \rvert ^2}{f_j(l)^2} \leq \epsilon 
\]
so that
\begin{equation*}
\begin{split}
&\sum_{l \in S_L} f^*_i(l)\log^2\left(\frac{f^*_j(l)}{f_j(l)} \right)
\leq \frac{\epsilon}{2},
 \sum_{l \in S_L}   \frac{\left\lvert f^*_j(l)-f_j(l) \right\rvert^2}{f^*_j(l)}
\leq \epsilon
\text{ and }
 \sum_{l \in S_L}  \frac{\left\lvert f^*_j(l)-f_j(l) \right\rvert^2}{f_j(l)}
\leq \epsilon. 
\end{split}
\end{equation*}
Moreover using Assumptions  \eqref{eq:dislog2} and \eqref{eq:disf} if for all $1 \leq i,j \leq k$,
\[ \sum_{l \in S_L^c} f^*_i(l)\log^2\left(f_j(l) \right)<\frac{\epsilon}{8},\]
then
\[
\sum_{l \in S_L^c} f^*_i(l)\log^2\left(\frac{f^*_j(l)}{f(l)} \right) 
\leq 2\sum_{l \in S_L^c} f^*_i(l) \log^2 (f^*_j(l)) +2 \sum_{l \in S_L^c} f^*_i(l)  \log^2 (f_j(l))
\leq \frac{\epsilon}{2}. 
\]
Combining the two last remarks,
 we obtain
\begin{equation*}
\begin{split}
P_G
& \geq
 \prod_{j=1}^k  \Bigg(  DP(G)\bigg(f ~ : ~\left(1-\sqrt{\frac{\epsilon}{4}} \right)f^*_j(l) \leq f(l) \leq \left(1+\sqrt{\frac{\epsilon}{4}}\right)f^*_j(l) , ~ \forall l \in S_L ,\\
& \hspace{1cm}
 \sum_{l\in S_L^c} f^*_i(l)\log^2\left(f(l) \right)  \leq \frac{\epsilon}{8}
 \text{ and }  \sum_{l\in S_L^c} f_j(l) \leq \epsilon \bigg)\bigg) \\
& \geq   \prod_{j=1}^k  \bigg(  DP(G)\bigg(f ~ : ~
\exp{\left(-\sqrt{\frac{\epsilon}{ 16  \max_{1\leq i\leq k} \sum_{l\in S_L^c}f^*_i(l)}}\right)} \leq \sum_{m \in S_L^c} f(m) \leq \epsilon ,\\
  & \qquad \sum_{l\in S_L^c} f^*_i(l)\log^2\left(\frac{f(l)}{\sum_{m \in S_L^c} f(m)} \right) \leq \frac{\epsilon}{32}, ~ \forall 1 \leq i \leq k  \text{ and }\\
& \qquad
\left(1-\sqrt{\frac{\epsilon}{4}} \right)\frac{f^*_j(l)}{\displaystyle{\sum_{m \in S_L} f(m)}} 
\leq \frac{f(l)}{\displaystyle{\sum_{m \in S_L} f(m)}}  
\leq \left(1+\sqrt{\frac{\epsilon}{4}}\right) \frac{f^*_j(l)}{\displaystyle{\sum_{m \in S_L}} f(m)} , ~ \forall l\in S_L \bigg)\Bigg)\\
\end{split}
\end{equation*}
indeed if 
\[\exp{\left(-\sqrt{\frac{\epsilon}{ 16  \max_{1\leq i\leq k} \sum_{l\in S_L^c}f^*_i(l)}}\right)} \leq \sum_{m \in S_L^c} f(m) \]
 and 
 \[\sum_{l\in S_L^c} f^*_i(l)\log^2\left(\frac{f(l)}{\sum_{m \in S_L^c} f(m)} \right) \leq \frac{\epsilon}{32}, ~ \forall 1 \leq i \leq k \]
then
\begin{equation*}
\begin{split}
\sum_{l\in S_L^c}& f^*_i(l)\log^2\left(f(l) \right) \\
&\leq 2 \sum_{l\in S_L^c} f^*_i(l)\log^2\left(\frac{f(l)}{\sum_{m \in S_L^c} f(m)} \right)
     + 2 \sum_{l\in S_L^c} f^*_i(l)\log^2\left(\sum_{m \in S_L^c} f(m)\right) 
  \leq    \frac{\epsilon}{8}.
\end{split}
\end{equation*}
Using the tail free property of the Dirichlet process, $(f(l)/\sum_{m \in S_L^c} f(m))_{l\in S_L^c}$, $\sum_{m \in S_L^c} f(m)$ and   $(f(1)/\sum_{m \in S_L} f(m),\dots,$ $f(L)/\sum_{m \in S_L} f(m))$   are independent. So that we obtain
\begin{equation}\label{eq:KL2}
\begin{split}
P_G &\geq \int_{\exp{\left(-\sqrt{\frac{\epsilon}{16 \max_{1\leq i\leq k} \sum_{l\in S_L^c}f^*_i(l)}}\right)}}^{\epsilon} 
\frac{\Gamma(G(\mathbb{N}))}{\Gamma(G(S_L)) \Gamma(G(S_L^c))} a^{G(S_L^c)-1}(1-a)^{G(S_L)-1} 
   \\
& \hspace{0.5cm} \underbrace{Dir(G|_{S_L})\bigg(x ~ : ~
\left(1-\sqrt{\frac{\epsilon}{4}} \right)\frac{f^*_j(l)}{1-a} \leq x_l \leq \left(1+\sqrt{\frac{\epsilon}{4}}\right)\frac{f^*_j(l)}{1-a} , ~ \forall l\in S_L \bigg)}_{D(a)} \lambda(da) \\
& \hspace{0.5cm}  DP(G|_{ S_L^c})\bigg(\sum_{l \in S_L^c} f^*_i(l)\log^2\left(\frac{f(l)}{\sum_{m \in S_L^c} f(m)} \right)<\frac{\epsilon}{32}, ~ \forall 1 \leq i \leq k   \bigg)
\end{split}
\end{equation}

We first control the integral of Equation \eqref{eq:KL2}.
Note that if 
\[x \in V_{SL} :=\left\{ x \in \Delta_{\lvert S_L\rvert}: ~ x_l\in V_l, ~ \forall l \in S_L \setminus \{l^*\} \right\} \]
where for all $l \in S_L \setminus \{l^*\}$ 
\[V_l:=\left\{x_l: ~ \left(1-\sqrt{\frac{\epsilon}{16}}f^*_j(l^*)\right) \frac{f^*_j(l)}{1-a} \leq x_l \leq \left(1+\sqrt{\frac{\epsilon}{16}}f^*_j(l^*)\right)\frac{f^*_j(l)}{1-a}\right\}\]
and if
\begin{equation}\label{eq:borne_a}
 a \in V_A := \left\{ \sum_{l \in S_L} f^*_j(l) - \sqrt{\frac{\epsilon}{16}} f^*_j(l^*) \leq 1-a \leq \sum_{l \in S_L} f^*_j(l) + \sqrt{\frac{\epsilon}{16}} f^*_j(l^*) \right\} \end{equation}
then for all $l \in S_L$,
\begin{equation}\label{eq:borne_aautre}
\left(1-\sqrt{\frac{\epsilon}{4}}\right)\frac{f^*_j(l)}{1-a} \leq x_l \leq \left(1+\sqrt{\frac{\epsilon}{4}}\right)\frac{f^*_j(l)}{1-a},  
\end{equation}
where $x_{l^*}=1-\sum_{l \in S_L, l\neq l^*}x_l$.
So that
\begin{equation}\label{eq:dir_sL}
\begin{split}
& D(a)
 \geq \frac{\Gamma(G( S_L))}{\prod_{m\in S_L} \Gamma(G(m))} \mathds{1}_{V_A}(a)
 \int_{V_{SL}}  \prod_{l\in S_L \setminus \{l^*\} } x_l^{G(l)-1}  \left( 1- \sum_{m \in S_L\setminus \{l^*\}} x_m \right)^{G(l^*)-1} \lambda(dx)
\end{split}
\end{equation}
where 
\begin{equation}\label{eq:discret_xl*}
\left( 1- \sum_{m\in S_L \setminus \{l^*\}} x_m \right)^{G(l^*)-1} \geq  \left(\frac{f^*_j(l^*)}{1-a} \right)^{G(l^*)-1}  \min\left((1/2)^{G(l^*)-1}, (3/2)^{G(l^*)-1}\right)
\end{equation}
 and
\begin{equation}\label{eq:intxl}
\begin{split}
& \int_{V_l}
 x_l^{G(l)-1}   \lambda(dx_{l}) \geq \left( \frac{f^*_j(l)}{1-a} \right)^{G(l)}   f^*_j(l^*) \sqrt{\frac{\epsilon}{4}}   \min\left((1/2)^{G(l)-1}, (3/2)^{G(l)-1}\right)
,\end{split}
\end{equation}
using Equation \eqref{eq:borne_aautre}.
Then combining Equations \eqref{eq:dir_sL}, \eqref{eq:discret_xl*} and \eqref{eq:intxl}; $D(a)$ is bounded by 
\begin{equation*}\label{eq:dirsLder}
\begin{split}
& \mathds{1}_{V_A}(a)  \frac{\Gamma(G(S_L))}{\prod_{m \in S_L} \Gamma(G(m))}   \left( \frac{2}{3} \right)^{L}   \left( \frac{1}{2} \right)^{G(\mathbb{N})}   
 \left( \frac{ \epsilon}{4} \right)^{(L-1)/2}  
 \left( \frac{1}{1-a} \right)^{G( S_L) -1}
 f^*_j(l^*)^{L-2}
 \prod_{l\in S_L}  f^*_j(l)^{G(l)}  .
\end{split}
\end{equation*}
So that 
\begin{equation}\label{eqmoche}
\begin{split}
&\int_{\exp{\left(-\sqrt{\frac{\epsilon}{16 \max_{1\leq i\leq k} \sum_{l\in S_L^c}f^*_i(l)}}\right)}}^{\epsilon} 
\frac{\Gamma(G(\mathbb{N}))}{\Gamma(G( S_L)) \Gamma(G(S_L^c))} a^{G(S_L^c)-1}(1-a)^{G(S_L)-1}  D(a) \lambda(da) \\
& \geq \frac{\Gamma(G(\mathbb{N}))}{G(S_L^c) \Gamma(G(S_L^c))\prod_{l\in S_L}\Gamma(G(l))} \left(  \frac{\epsilon}{4} \right)^{(L-1)/2}  \left( \frac{2}{3} \right)^{L}   \left( \frac{1}{2} \right)^{G(\mathbb{N})}   
 f^*_j(l^*)^{L-2} \prod_{l\in S_L} f^*_j(l)^{G(l)}\\
& \hspace{0.5cm} \left(\left(
\min \left(  \sum_{m \in S_L^c} f^*_j(m) + \sqrt{\frac{\epsilon}{16}} f^*_j(l^*) , \epsilon \right) \right)^{G(S_L^c)}\right. \\
& \hspace{1cm}
\left. - \left(\max\left(
\exp{\left(-\sqrt{\epsilon / \Big(16 \max_{1\leq i\leq k} \sum_{l \in S_L^c}f^*_i(l) \Big) } \right)},  
 \sum_{m \in S_L^c} f^*_j(m) - \sqrt{\frac{\epsilon}{16}} f^*_j(l^*) \right) \right)^{G(S_L^c)}
 \right) \\
 &\gtrsim   \left( \sqrt{\frac{\epsilon}{4}}  \right)^{L-1+G(\mathbb{N})}  
 \left(\frac{1}{3}\right)^L
 f^*_j(l^*)^{L-2 }  
 \prod_{l\in S_L} G(l) f^*_j(l)^{G(l)}  
.
\end{split}
\end{equation}
using that for all $0<a<1$,
\begin{equation}\label{eq:gamma}
\frac{1}{a} \leq \Gamma(a)   \leq \frac{2}{a}.
\end{equation}
and that under Assumption \eqref{eq:disf}, for $\epsilon$ small enough
\begin{equation*}
\exp\left(-\frac{\epsilon^{-\delta/2}}{4}\right) \geq 
\exp{\left(-\sqrt{\frac{\epsilon}{16 \max_{1\leq i\leq k} \sum_{l\in S_L^c}f^*_i(l)}}  \right)}
\geq 0\geq
   \sum_{m \in S_L^c} f^*_j(m) - \sqrt{\frac{\epsilon}{16}} f^*_j(l^*)  .
\end{equation*}

We now control the last term of Equation \eqref{eq:KL2}. Using Markov's inequality,
\begin{equation}\label{eq:dp_esp}
\begin{split}
& DP(G|_{S_L^c})\left(\sum_{l \in S_l^c} f^*_i(l)\log^2\left(\frac{f(l)}{\sum_{m\in S_L^c} f(m)} \right)<\frac{\epsilon}{16 }, ~ \forall 1 \leq i \leq k  \right)  \\
&\geq 1-  DP(G|_{S_L^c})  \left(\sum_{i=1}^k\sum_{l\in S_L^c} f^*_i(l)\log^2\left(\frac{f(l)}{\sum_{m\in S_L^c} f(m)} \right) >\frac{\epsilon}{16 } \right) \\
& \geq 1- \frac{  \mathbb{E}^{\theta^*}\left(\sum_{i=1}^k\sum_{l \in S_L^c} f^*_i(l)\log^2\left(\frac{f(l)}{\sum_{m\in S_L^c} f(m)}\right)\right)}{\frac{\epsilon}{16 }}  . 
\end{split}
\end{equation}
As $f(l)/\sum_{m\in S_L^c} f(m)$ is distributed from $\beta\big(G(l),G( S_L^c\setminus \{ l\}) \big)$,
\begin{equation}\label{eq:Elog2fl}
\begin{split}
&\mathbb{E}^{\theta^*}\left[ \log^2\left( \frac{f(l)}{\sum_{m\in S_L^c} f(m)}\right)\right] \\
&=
   \frac{\Gamma( G(S_L^c))}{\Gamma( G(l))\Gamma( G(S_L^c \setminus\{l\}))}
 \quad   \Bigg(\underbrace{\int_0^{1/2}  \log^2(x) x^{G(l)-1} (1-x)^{G(S_L^c \setminus\{l\})-1} \lambda(dx) }_{I_1}   \\
& \hspace{4.3cm}+
\underbrace{\int_{1/2}^1  \log^2(x) x^{G(l)-1} (1-x)^{G(S_L^c \setminus\{l\})-1}  \lambda(dx) }_{I_2}    \Bigg),
\end{split}
\end{equation}
with
\begin{equation}\label{eq:I1}
\begin{split}
I_1 
&\leq 2  \int_0^{1/2}  \log^2(x) x^{G(l)-1} \lambda(dx) 
= \frac{4 \log^2(2) (1/2)^{G(l)}}{G(l)^3}  \left(   \frac{G(l)^2}{2}  + \frac{G(l)}{\log 2}   + \frac{1}{\log^2 2}  \right)\\
&\leq \frac{12 \log^2 (2) (G(\mathbb{N}))^2}{G(l)^3}
,\end{split}
\end{equation}
and
\begin{equation}\label{eq:I2}
\begin{split}
I_2  
\leq  2\log^2(2)  \int_{1/2}^1  (1- x)^{G(S_L^c \setminus\{l\})-1}  \lambda(dx)
\leq \frac{2 \log^2(2)}{G(S_L^c \setminus\{l\})}.
\end{split}
\end{equation}
Combining Equations \eqref{eq:gamma} \eqref{eq:Elog2fl}, \eqref{eq:I1} and \eqref{eq:I2},
we obtain
\begin{equation}\label{eq:espe}
 \mathbb{E}^{\theta^*}\left(\sum_{l \in S_L^c} f^*_i(l)\log^2\left(\frac{f(l)}{\sum_{m\in S_L^c} f(m)}\right)\right) 
\leq 24 G(\mathbb{N})^2 \log^2(2) \sum_{l \in S_L^c} \frac{f^*_i(l)}{(G(l))^2} .
\end{equation}
Then using Assumption \eqref{eq:disf_g} and Equations \eqref{eq:dp_esp} and \eqref{eq:espe}
\begin{equation}\label{eqmoche2}
\begin{split}
& DP(G|_{S_L^c})
\bigg(\sum_{l \in S_L^c} f^*_i(l)\log^2\left(\frac{f(l)}{\sum_{m\in S_L^c} f(m)} \right)<\frac{\epsilon}{16 k}, ~ \forall 1 \leq i \leq k \bigg) \geq 1/2
\end{split}
\end{equation}
Lemma \ref{prop:discret} follows combining Equations \eqref{eq:KL2}, \eqref{eqmoche} and \eqref{eqmoche2}.
\end{proof}


\section{Proof of Theorem \ref{th:vit_cont_vrai} (Dirichlet process mixtures of Gaussian distributions)}\label{se:pr_th_continuous}

\begin{proof}[Proof of Theorem \ref{th:vit_cont_vrai} ]
Let $\sigma_n=\tilde{\epsilon}_n /(\log (1/ \tilde{\epsilon}_n))$,
 $\tilde{\epsilon_n}=n^{-\beta/(2\beta +1)} (\log n)^{t_0}$. Following the computations of the proof of Theorem 4 of \citep{ShToGh13} and using Assumption \ref{hyp:B10}, Lemma \ref{th:vit_cont} ensures that Assumption \eqref{hyp:KL} holds with $t_0 \geq (2+2/\gamma_0 +1)/(1/\beta +2) $. Using Assumption \eqref{hyp:Q0}, Assumption \eqref{hyp:voisQ*} holds. Using Theorem 5 of \citep{ShToGh13}, Assumptions \ref{hyp:B7}, \ref{hyp:B8} and \ref{hyp:B9}; Assumption \eqref{hyp:Fn} holds with $\epsilon_n=n^{-\beta /(2 \beta +1)} (\log(n))^t$, $t>t_0$. This concludes the proof of Theorem \ref{th:vit_cont_vrai}.
 \end{proof}

The following lemma is a  generalization of Lemma 4 of \citep{KrRoVa10} in the HMM context. In other words, we give a set of density functions $(f_j)_{1\leq j \leq k}$ satisfying Assumptions \eqref{hypa:log2}--\eqref{hypa:qui2S} in Lemma \ref{th:vit_cont}.

\begin{lemma}\label{th:vit_cont}
Assume that there exist $\beta,L$ and $\gamma$ such that for all $1\leq j \leq k $, $f^*_j \in \mathcal{P}(\beta,L,\gamma)$ and Assumptions  \ref{hyp:B2}--\ref{hyp:B4} hold. Let $\sigma$ be a positive real small enough.

Then for all $1 \leq j \leq k$, there exists a discrete measure 
 $m_j= \sum_{i=1}^{N_j} \mu_j^i \delta_{z_j^i} $ supported on $\{ x  ~ : ~ f^*_j(x) \geq K_j \sigma^{2\beta +H_1}\}$ with $H_1> 2 \beta$,  $K_j$ a constant small enough 
 and $N_j=O(\sigma^{-1} \lvert \log \sigma \rvert ^{2/\gamma_0})$  
 such that
Assumptions \eqref{hypa:log2}--\eqref{hypa:qui2S} hold with $f_j=\phi_\sigma * m_j$ for all $1 \leq j \leq k$ and $\sigma^{2\beta}\leq \tilde{\epsilon}_n^{2}/u_n$.

Assumptions \eqref{hypa:log2}--\eqref{hypa:qui2S} also hold with $f_j=\phi_{\tilde{\sigma}} *\tilde{m}_j$, for all $\tilde{\sigma} \in [\sigma, _ \sigma + \sigma^{\delta' H_1 +2}]$  and for all $\tilde{m}_j= \sum_{i=1}^{+\infty} \tilde{\mu}_j^i \delta_{\tilde{z}_j^i} $ such that $\tilde{\mu}_j^{1:N_j} \in \mathcal{B}(\mu_j, \sigma^{\delta' H_1 +2})$ and   $\tilde{z}_j^i \in \mathcal{B}(z_j^i, \sigma^{\delta' H_1 +2})$, for all $1\leq i \leq N_j$, where $\delta' \geq 1+ \beta/H_1$.
\end{lemma}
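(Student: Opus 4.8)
The plan is to build the approximating mixtures $f_j=\phi_\sigma*m_j$ one emission density at a time, reusing the single-density construction of Lemma~4 of \cite{KrRoVa10} (in the refined form behind the proof of Theorem~4 of \cite{ShToGh13}), and then to verify the six conditions \eqref{hypa:log2}--\eqref{hypa:qui2S} by handling the diagonal terms ($i=j$) exactly as in the i.i.d.\ case and the off-diagonal terms ($i\neq j$) with the new tail-comparison hypothesis \ref{hyp:B3}. Concretely, for each $1\leq j\leq k$ I would apply that construction to $f^*_j\in\mathcal{P}(\beta,L,\gamma)$ under \ref{hyp:B2} and \ref{hyp:B4}: it produces a discrete $m_j=\sum_{i=1}^{N_j}\mu_j^i\delta_{z_j^i}$ with $N_j=O(\sigma^{-1}|\log\sigma|^{2/\gamma_0})$ atoms, all in $\{x:f^*_j(x)\geq K_j\sigma^{2\beta+H_1}\}$ (which by \ref{hyp:B2} sits in a ball of radius $a_\sigma\lesssim|\log\sigma|^{1/\gamma_0}$), together with the pointwise control $|\log(f^*_j/f_j)|\lesssim\sigma^{\beta}$ on the bulk $S_j:=\{f^*_j\geq\sigma^{H_1}\}$ and the crude global bound $|\log(f^*_j/f_j)(y)|\lesssim 1+|y|^{\gamma_0}+|y|^2/\sigma^2$ coming from the Gaussian tail of $f_j$. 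I would then take $\tilde f_j:=f_j$ and the common set $S:=\bigcap_{j} S_j$, which by the unimodality in \ref{hyp:B4} is an interval whose complement lies in the far tails, at distance $\sim a_\sigma$ from the origin.

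With $\tilde f_j=f_j$, condition \eqref{hypa:maxloSc} is identically zero and \eqref{hypa:qui2} coincides with \eqref{hypa:qui2S}. The remaining diagonal-type conditions are then standard. On $S$ both $f^*_j$ and $f_j$ are bounded below by a power of $\sigma$, so the bulk $\chi^2$ integral $\int_S|f^*_j-f_j|^2/f^*_j\,\lambda$ is $\lesssim\sigma^{2\beta}\leq\tilde\epsilon_n^2/u_n$ exactly as in \cite{KrRoVa10}. The tail masses $\int_{S^c}f^*_j\,\lambda$ and $\int_{S^c}f_j\,\lambda$ are controlled by the envelope \ref{hyp:B2} (which forces every $f^*_i$ to be small on $S^c$) and by the Gaussian decay of $\phi_\sigma*m_j$ beyond $a_\sigma$, giving \eqref{hypa:maxf*Sc} and \eqref{hypa:maxSc}; the constant $H_1>2\beta$ ensures the discarded region carries negligible mass while $f_j$ stays bounded below on $S$.

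The crux is \eqref{hypa:log2} in the off-diagonal case $i\neq j$, which is precisely where the discussion following Theorem~\ref{th1} and hypothesis \ref{hyp:B3} matter. Splitting $\int f^*_i\log^2(f^*_j/f_j)\,\lambda$ over $S$ and $S^c$: on $S$ the bound $\log^2(f^*_j/f_j)\lesssim\sigma^{2\beta}$ holds uniformly, and as $f^*_i$ is a probability density the contribution is $\lesssim\sigma^{2\beta}\leq\tilde\epsilon_n^2/u_n$. On $S^c$ I would use $\log^2(f^*_j/f_j)\lesssim(\log f^*_j)^2+(\log f_j)^2$ with $(\log f_j)^2\lesssim|y|^4/\sigma^4$ from the Gaussian tail and $(\log f^*_j)^2\lesssim|y|^{2\gamma_0}$ from \ref{hyp:B2}; the difficulty is that the weight is $f^*_i$, not $f^*_j$, so I would first invoke \ref{hyp:B3} to write $f^*_i\leq M_{ij}\,f^*_j\exp(\tau_{ij}|y|^{\gamma_{ij}})$ with $\gamma_{ij}<\gamma_0$ (the sub-Gaussian factor being harmless), after which the envelope \ref{hyp:B2} and the near-boundary behaviour $\log f_j\approx\log f^*_j\gtrsim H_1\log\sigma$ make the integral of order $\sigma^{H_1}$ up to logarithmic factors, which is $\leq\tilde\epsilon_n^2/u_n$ because $H_1>2\beta$. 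This tail balance, transferring the envelope decay across indices via \ref{hyp:B3} against the super-Gaussian blow-up of $\log f_j$, is the main obstacle and the only genuinely HMM-specific step.

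Finally, for the perturbation statement I would show stability of all the above bounds under replacing $\sigma$ by $\tilde\sigma\in[\sigma,\sigma+\sigma^{\delta'H_1+2}]$ and $m_j$ by any $\tilde m_j$ whose first $N_j$ weights and atoms lie within $\sigma^{\delta'H_1+2}$ of those of $m_j$. Since $f_j\gtrsim\sigma^{2\beta+H_1}$ on $S$, moving the atoms and the bandwidth by $\sigma^{\delta'H_1+2}$ perturbs $f_j$ multiplicatively by a factor $1+o(\sigma^{2\beta})$ as soon as $\delta'\geq1+\beta/H_1$; hence the log-ratios, the $\chi^2$ integrals and the tail masses change negligibly and \eqref{hypa:log2}--\eqref{hypa:qui2S} persist on the whole neighbourhood. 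This stability is exactly what allows the calling argument of Theorem~\ref{th:vit_cont_vrai}, together with the prior-mass lower bound supplied by \ref{hyp:B10}, to establish \eqref{hyp:KL}.
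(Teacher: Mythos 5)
There is a genuine gap, and it sits at the foundation of your argument: your bulk set $S=\bigcap_j\{f^*_j\geq\sigma^{H_1}\}$ is not the set on which the \cite{KrRoVa10} construction gives you the pointwise control $\lvert\log(f^*_j/f_j)\rvert\lesssim\sigma^\beta$. The approximation of Lemma 2 of \cite{KrRoVa10} (Equation \eqref{eq:approxf*} in the paper) has an error of the form $f^*_j(x)\,O(R^j(x)\sigma^\beta)+O((1+R^j(x))\sigma^H)$, where $R^j$ is a polynomial in the local derivatives $l^j_m=(\log f^*_j)^{(m)}$ and in $L$; this error is only $O(\sigma^\beta)$ relative to $f^*_j$ on the sets $A^j_\sigma$ where those derivatives are controlled at scale $\sigma^{-m}\lvert\log\sigma\rvert^{-m/2}$. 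On $\{f^*_j\geq\sigma^{H_1}\}$ alone the bound you claim is simply not available. Once $S$ is corrected to $\bigcap_j(A^j_\sigma\cap E^j_\sigma)$, as in the paper, your key geometric claim --- that $S^c$ ``lies in the far tails'', via \ref{hyp:B4} --- fails: the sets $(A^j_\sigma)^c$ can intersect the bulk, so your entire treatment of \eqref{hypa:log2}, \eqref{hypa:maxSc} and \eqref{hypa:maxf*Sc} by pure tail estimates ((\ref{hyp:B2})+(\ref{hyp:B3})) does not cover them. The paper handles this non-tail part of $S^c$ by a different mechanism: the finite-moment conditions \eqref{eq:assC2} (which follow from $f^*_j\in\mathcal{P}(\beta,L,\gamma)$ with polynomial $L$ together with \ref{hyp:B2}) combined with Markov's inequality, as in its proof of \eqref{hypa:maxf*Sc}. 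Without this ingredient your proof does not close.

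Two further points. First, your choice $\tilde f_j:=f_j$ is a real deviation from the paper, and it is exactly what the paper's distinct intermediate $\tilde f_j=h^\beta_j*\phi_{\tilde\sigma}$ is designed to avoid: since your $f_j$ is a compactly supported discrete mixture of Gaussians, $\log^2(f^*_j/f_j)$ grows like $\lvert y\rvert^4/\sigma^4$ on the far tails, and this blow-up lands directly inside the $S^c$ part of \eqref{hypa:log2}; with the paper's $\tilde f_j$, one has $f^*_j\lesssim\tilde f_j\lesssim1$ globally, so on $S^c$ the log-ratio is controlled by $1+\log^2 f^*_j$ and the Gaussian-tail issue is confined to the single whole-space term $\int f^*_i\log^2(\tilde f_j/f_j)$, which is precisely what the machinery of Lemma 4 of \cite{KrRoVa10} bounds. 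Your route can in principle absorb the $\sigma^{-4}$ through the interpolation argument of Equation \eqref{eq:Ef*}, but then your quantitative claim is wrong as stated: with only $H_1>2\beta$, the tail bound you obtain is of order $\sigma^{H_1/\delta-4}$ (up to logarithms), which is not $O(\sigma^{2\beta})$ unless $H_1$ is taken large enough (of order $\delta(2\beta+4)$ or more); the ``because $H_1>2\beta$'' justification does not survive the extra $\sigma^{-4}$. Second, the same unquantified optimism appears in the perturbation step, where the moved atoms and bandwidth again only perturb $f_j$ harmlessly on the corrected $S$, not on yours. The overall architecture (construct $m_j$ per coordinate, use \ref{hyp:B3} for cross-index tails, stability under $\sigma^{\delta'H_1+2}$-perturbations) is the right one, but as written the proposal proves the lemma only for densities whose log-derivatives are globally controlled, which is strictly smaller than $\mathcal{P}(\beta,L,\gamma)$.
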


\begin{proof}[Proof of Lemma \ref{th:vit_cont}]
The proof of Lemma \ref{th:vit_cont} is based on \citep{KrRoVa10}. First notice that $f^*_j \in \mathcal{P}(\beta,L,\gamma)$ implies that for all integer $m\leq \beta$, $\lvert l^j_m \rvert$ is bounded by a polynomial. Then Assumption  \ref{hyp:B2} implies that Assumption (C2) of \citep{KrRoVa10} holds and stronglier implies that there exists $\delta>0$ such that for all $1 \leq i,j,\iota \leq k$ and all integer $m\leq \beta$,
 \begin{equation}\label{eq:assC2}
 \begin{split}
 &\int \lvert l^j_m(x) \rvert^{(2 \beta + \delta)/m} f^*_i(x)  \lambda(dx) < \infty,
\\
&\int \lvert L^j(x) \rvert^{2  + \delta/\beta} f^*_i(x)  \lambda(dx) < \infty,
\\
 &\int \lvert l^j_m(x) \rvert^{(2 \beta + \delta)/m} f^*_i(x) \log f^*_\iota (x) \lambda(dx) < \infty,
\\
&\int \lvert L^j(x) \rvert^{2  + \delta/\beta} f^*_i(x) \log f^*_\iota (x) \lambda(dx) < \infty.
 \end{split}
 \end{equation}

Let $\sigma >0$, we consider 
\[
S=\bigcap_{j=1}^k \left( A_\sigma^j \cap E_\sigma^j \right),
\]
where 
\[
 A_\sigma^j = \left\{ x ~ : ~ \lvert l_m^j(x) \rvert \leq B \sigma ^{-m} \lvert \log(\sigma) \rvert^{-m/2}, \forall 1 \leq m \leq \lfloor \beta \rfloor,  ~ \lvert L^j(x) \rvert \leq B \sigma^{- \beta}\lvert \log(\sigma) \rvert^{-\beta/2}  \right\}
 \]
and
 \[
 E_\sigma^j=\left\{ x  ~: ~ f^*_j(x)\geq  \sigma^{H_1} \right\} .
 \]
Using Assumptions \ref{hyp:B1}, \ref{hyp:B2}, \ref{hyp:B4} and Lemma 2 of  \citep{KrRoVa10}, there exists $k$ density functions $h_\beta^j$ such that for all $1 \leq j \leq k$ and all $x \in S$,
\begin{equation}\label{eq:approxf*}
h_\beta^j * \phi_\sigma(x) = f^*_j(x)\left(1 + O(R^j(x) \sigma^\beta) \right) + O((1+R^j(x))\sigma^H) 
\end{equation}
where $R^j$ is defined as in Equation (16) page 1232 of \citep{KrRoVa10} and $H$ is as large as we want.
Using Assumptions \ref{hyp:B1}, \ref{hyp:B2} and \ref{hyp:B4}, the proof of Lemma 2 of \citep{KrRoVa10} is easily generalizable in this context so that
\begin{equation}\label{eq:tilde_f_sc}
\begin{split}
\int_{S^c} h_\beta^j*\phi_\sigma(y)\lambda(dy) \lesssim \sigma^{2\beta}
.
\end{split}
\end{equation} The generalization can be proved using  Equation \eqref{eq:assC2} and by replacing Equation (56) of \citep{KrRoVa10} by Equation \eqref{eq:Ef*}.

 As at page 1251 in  \citep{KrRoVa10}, we denote
\[
\tilde{h}^\beta_j=\frac{\mathds{1}_{x : h^\beta_j(x) \geq \sigma^{H_2}} h^\beta_j}{\int_{x : h^\beta_j(x) \geq \sigma^{H_2}} h^\beta_j d\lambda} 
\]
and using Lemma 12 of  \citep{KrRoVa10}, for all $1 \leq j \leq k$, there exist 
$k$ 
 discrete distributions
  $m_j= \sum_{i=1}^{N_j} \mu_j^i \delta_{z_j^i} $ 
  supported in
   $\{ x  ~ : ~ f^*_j(x) \geq K_j \sigma^{2\beta +H_1}\}$
    for 
    $H_1> 2 \beta$, 
    a constant $K_j$ 
    small enough and with 
    $N_j=O(\sigma^{-1} \lvert \log \sigma \rvert ^{2/\gamma_0})$  
such that 
\begin{equation}\label{eq:borne_discret}
\begin{split}
& \lVert \tilde{h}^\beta_j * \phi_\sigma - m_j* \phi_\sigma \rVert_\infty \leq \sigma^{-1} e^{-C\lvert \log \sigma \rvert ^{2/\gamma_0} } ,\\
&
\lVert\tilde{h}^\beta_j * \phi_\sigma - m_j* \phi_\sigma \rVert_1 \leq \sigma^{-1} e^{-C'\lvert \log \sigma \rvert ^{2/\gamma_0} }
\end{split}
\end{equation}
for any $C',C$ large enough.

It is now sufficient to prove that Assumptions \eqref{hypa:log2} to \eqref{hypa:qui2S} hold with $\epsilon^{4/(2-\alpha)}_n=O(\sigma^{2\beta})$, 
$f_j=\tilde{m}_j * \phi_{\tilde{\sigma}}$ 
and
$\tilde{f}_j=h^\beta_j * \phi_{\tilde{\sigma}}$
for all 
$\tilde{\sigma} \in [\sigma, \sigma + \sigma^{\delta' H_1 +2}]$
 and all discrete distributions 
 $\tilde{m}_j= \sum_{i=1}^{N_j} \tilde{\mu}_j^i \delta_{\tilde{z}_j^i} $
  such that 
  $\tilde{\mu}_j \in \mathcal{B}(\mu_j, \sigma^{\delta' H_1 +2})\cap \Delta_{N_j}$
   and 
   $\tilde{z_j} \in \mathcal{B}(z_j, \sigma^{\delta' H_1 +2})$
    where 
    $\delta' \geq 1+ \beta/H_1$.
By Lemma 2 of \citep{KrRoVa10},
\begin{equation}\label{eq:bonre_l1_linf_f_m}
\lVert f_j - m_j*\phi_{\sigma} \rVert_1 \lesssim \sigma^{H_1 + 1 + \beta}, 
\quad 
\lVert f_j - m_j*\phi_{\sigma} \rVert_\infty \lesssim \sigma^{H_1 + \beta}, 
\end{equation}

\begin{itemize}
\item Proof of \eqref{hypa:log2}. We cut the integral as in the following,
\begin{equation}\label{eq:contKLi}
\begin{split}
\int  f^*_i(y)\log^2 \frac{f^*_j(y)}{f_j(y)} \lambda(dy)
\leq &
 \int_S f^*_i(y)\log^2 \frac{f^*_j(y)}{\tilde{f}_j(y)} \lambda(dy) \\
& +
\int_{S^c} f^*_i(y)\log^2 \frac{f^*_j(y)}{\tilde{f}_j(y)} \lambda(dy) \\
& +
\int f^*_i(y)\log^2 \frac{\tilde{f}_j(y)}{f_j(y)} \lambda(dy)
.\end{split}
\end{equation}
The last integral can be controlled by $O(\sigma^{2\beta})$ as in the proof of Lemma 4 of \citep{KrRoVa10}.

Using Equation \eqref{eq:approxf*} we control the first integral of the bound of Equation \eqref{eq:contKLi}:
\begin{equation*}
\begin{split}
&\int_S f^*_i(y)\log^2 \frac{f^*_j(y)}{\tilde{f}_j(y)} \lambda(dy)
\leq
\int_S f^*_i(y) \frac{\lvert f^*_j(y)-\tilde{f}_j(y)\rvert^2}{\min(\tilde{f}_j(y),f^*_j(y))} \lambda(dy) \lesssim
\sigma^{2 \beta}
\end{split}
\end{equation*}
as soon as $H$ is large enough, using Equation \eqref{eq:approxf*}.

Using that $f^*_j \lesssim \tilde{f}_j \lesssim \phi_\sigma*f^*_j \lesssim 1$ (see Remark 1  and the bottom of page 1252 of \citep{KrRoVa10}), we control the second integral in the bound of Equation \eqref{eq:contKLi},
\begin{equation*}
\begin{split}
& \int_{S^c} f^*_i(y)\log^2 \frac{f^*_j(y)}{\tilde{f}_j(y)} \lambda(dy) \lesssim
 \int_{S^c} f^*_i(y) \lambda(dy) 
 +
  \int_{S^c} f^*_i(y) \log^2 \left(f^*_j(y)\right) \lambda(dy) 
\end{split}
\end{equation*}
which is bounded by $O(\sigma^{2 \beta})$ following the proof of \eqref{hypa:maxf*Sc}.

\item Proof of \eqref{hypa:qui2}.
We cut the integral into three parts:
\begin{equation*}
\begin{split}
&  \int_S \frac{  \lvert f^*_i(y) - f_i(y)  \rvert^2}{f^*_i(y)}  \lambda(dy)  \\
&  \lesssim  
\underbrace{\int_S \frac{  \lvert f^*_i(y) - h_\beta^i*\phi_\sigma(y)  \rvert^2}{f^*_i(y)}  \lambda(dy)}_{I_1}
+ \underbrace{ \int_S \frac{  \lvert h_\beta^i*\phi_\sigma(y) - \tilde{h}_\beta^i*\phi_\sigma(y)  \rvert^2}{f^*_i(y)}  \lambda(dy)  }_{I_2} \\
&\quad + \underbrace{ \int_S \frac{  \lvert \tilde{h}_\beta^i*\phi_\sigma(y) - f_i(y)  \rvert^2}{f^*_i(y)}  \lambda(dy) }_{I_3} \\
\end{split}
\end{equation*}
Using Equation \eqref{eq:approxf*},
\begin{equation}\label{eq:qui2_S_tilde_f}
I_1 \lesssim \sigma^{2\beta}.
\end{equation}
We now control $I_2$ using the bound
\[
\left\lvert \frac{h_\beta^i*\phi_\sigma(y)}{\tilde{h}_\beta^i*\phi_\sigma(y)} -  1\right\rvert =O(\sigma^{2\beta}), \text{ for all } y \in S
\]
 of page 1252 of \citep{KrRoVa10} and Equation \eqref{eq:approxf*}. Then
\begin{equation}
\begin{split}
I_2& = \int_S \left(\frac{  \lvert h_\beta^i*\phi_\sigma(y) - \tilde{h}_\beta^i*\phi_\sigma(y)  \rvert}{h_\beta^i*\phi_\sigma(y)}\right)^2 h_\beta^i*\phi_\sigma(y) \frac{h_\beta^i*\phi_\sigma(y)}{f^*_i(y)} \lambda(dy)\\
& 
\leq \int_S (O(\sigma^{2\beta}))^2 ~  h_\beta^i*\phi_\sigma(y) ~  2 ~  \lambda(dy) \lesssim \sigma^{2 \beta}
,\end{split}
\end{equation}
using Equation \eqref{eq:approxf*}.
As to $I_3$, using Equations \eqref{eq:borne_discret} and \eqref{eq:bonre_l1_linf_f_m}, it is upper-bounded by
\begin{multline}
2 \frac{\lVert \tilde{h}^\beta_i * \phi_\sigma - m_i*\phi_\sigma \rVert_\infty  
\lVert \tilde{h}^\beta_i *  \phi_\sigma - m_i*\phi_\sigma \rVert_1}{\sigma^{H_1}} 
  + 2 \frac{\lVert f_i - m_i*\phi_\sigma \rVert_\infty  
\lVert f_i - m_i*\phi_\sigma \rVert_1}{\sigma^{H_1}} 
\lesssim \sigma^{2\beta}
\end{multline}
when $ 2 > \gamma_0 $, such a $\gamma_0$ can always be chosen (see the first line of page 1253 of \citep{KrRoVa10}).

\item Proof of \eqref{hypa:maxSc}. Assumption \eqref{hypa:maxSc} is proved in Equation \eqref{eq:tilde_f_sc}.

\item Proof of \eqref{hypa:maxf*Sc}. It is sufficient to bound 
\[
 \int_{(E_\sigma^i)^c}  f^*_j(y)   \lambda(dy) 
 \]
and
\[
 \int_{(A_\sigma^i)^c}  f^*_j(y)   \lambda(dy) .
 \]
Using Assumption \ref{hyp:B3}, for all $0<\delta<1$
\begin{equation}\label{eq:Ef*}
\begin{split}
&\int_{(E_\sigma^i)^c}  f^*_j(y)   \lambda(dy) \\
& \leq \int_{\{y : f^*_j(y)< \sigma^{H_1} M_{j,i} \exp(\tau_{j,i} \lvert y \rvert^{\gamma_{j,i}} )\}}  f^*_j(y)   \lambda(dy) \\
& \leq \int_{\{y : f^*_j(y)< \sigma^{H_1} M_{j,i} \exp(\tau_{j,i} \lvert y \rvert^{\gamma_{j,i}} )\}}  (f^*_j(y))^{1/\delta}  (f^*_j(y))^{1-1/\delta} \lambda(dy)\\
& \leq \sigma^{H_1/\delta} \int (M_{j,i})^{1/\delta} \exp(\tau_{j,i} \lvert y \rvert^{\gamma_{j,i}}/\delta)   (f^*_j(y))^{1-1/\delta} \lambda(dy) \lesssim \sigma^{2 \beta}
\end{split}
\end{equation}
as soon as $H_1 > 2 \beta$, using Assumption \ref{hyp:B2}.
Moreover using \eqref{eq:assC2} and Markov inequality, as in the proof of Lemma 2 of \citep{KrRoVa10},  
\begin{equation}
\begin{split}
& \int_{(A_\sigma^i)^c}  f^*_j(y)   \lambda(dy) \lesssim \sigma^{2\beta}.
\end{split}
\end{equation}

\item Proof of \eqref{hypa:maxloSc}. Using the same argument as in the bottom of the page 1252 of \citep{KrRoVa10}, Equations \eqref{eq:borne_discret} and \eqref{eq:bonre_l1_linf_f_m},
\begin{equation}
\begin{split}
& \int_S f^*_i(y) \max_{1 \leq j \leq k} \log \left( \frac{\tilde{f}_j(y)}{f_j(y)} \right) \lambda(dy)\\
&\leq 
\int_S f^*_i(y) \max_{1 \leq j \leq k}  \frac{\lvert \tilde{f}_j(y)-f_j(y) \rvert}{f_j(y)} \lambda(dy)\\
&\leq
\int_S f^*_i(y) \max_{1 \leq j \leq k}  \frac{\lVert \tilde{f}_j-f_j \rVert_\infty}{\sigma^{H_2}  -  \lVert \tilde{f}_j-f_j \rVert_\infty} \lambda(dy)
\lesssim \sigma^{2\beta}
\end{split}
\end{equation}

\item Proof of \eqref{hypa:qui2S}.  Using that  $f^*_j \lesssim \tilde{f}_j$ (see Assumption (C3) of \citep{KrRoVa10}) Equation \eqref{eq:qui2_S_tilde_f} implies \eqref{hypa:qui2S}.

\end{itemize}

\end{proof}

\end{appendices}

\bibliographystyle{plainnat}
\bibliography{biblio2}

\end{document}